\documentclass[a4paper,11pt,reqno]{amsart}
\usepackage[T1]{fontenc}
\usepackage[utf8]{inputenc}
\usepackage{lmodern}
\usepackage{amsmath, amsthm, amssymb,eucal, amscd, mathrsfs, amsfonts}
\usepackage{hyperref}
\usepackage{pxfonts}
\setcounter{section}{-1}

\usepackage{yhmath}

\usepackage{graphicx}
\usepackage{color}

\usepackage{xcolor}
\usepackage{todonotes}

\usepackage[all]{xy}

\usepackage{latexsym}
\usepackage{pict2e}
\usepackage{graphicx}

\textheight 23.5cm
\topmargin=-1.0cm
\textwidth=16.5cm
\hoffset=-1.5cm  

\usepackage[lite]{amsrefs}
\newcommand*{\MRref}[2]{ \href{http://www.ams.org/mathscinet-getitem?mr=#1}{MR \textbf{#1}}}
\newcommand*{\arxiv}[1]{\href{http://www.arxiv.org/abs/#1}{arXiv: #1}}
\renewcommand{\PrintDOI}[1]{\href{http://dx.doi.org/\detokenize{#1}}{doi: \detokenize{#1}}%
  \IfEmptyBibField{pages}{, (to appear in print)}{}}

\usepackage{amssymb}

\setcounter{equation}{0}

\def\og{\leavevmode\raise.3ex\hbox{$\scriptscriptstyle\langle\!\langle$~}}
\def\fg{\leavevmode\raise.3ex\hbox{~$\!\scriptscriptstyle\,\rangle\!\rangle$}}

\usepackage{hyperref}
\usepackage{array}

\usepackage{nicefrac}


\newtheorem{thm}{Theorem}[section]
\newtheorem{pro}[thm]{Proposition}
\newtheorem{lem}[thm]{Lemma}
\newtheorem{cor}[thm]{Corollary}

\theoremstyle{definition}
\newtheorem{df}[thm]{Definition}

\newtheorem{dflem}[thm]{Definition and Lemma}

\theoremstyle{remark}
\newtheorem{rem}[thm]{Remark}

\newtheorem{ex}[thm]{Example}

\allowdisplaybreaks

\newcommand\id{1}
\newcommand{\Id}{\operatorname{Id}}
\newcommand\Hom{\operatorname{Hom}}
\newcommand{\cstar}{\text{C}^*}

\usepackage[all]{xy}

\def\commutatif{\ar@{}[rd]|{\circlearrowleft}}

\newcommand{\eq}[1][r]
   {\ar@<-3pt>@{-}[#1]
    \ar@<-1pt>@{}[#1]|<{}="gauche"
    \ar@<+0pt>@{}[#1]|-{}="milieu"
    \ar@<+1pt>@{}[#1]|>{}="droite"
    \ar@/^2pt/@{-}"gauche";"milieu"
    \ar@/_2pt/@{-}"milieu";"droite"}
    
    \def\dar[#1]{\ar@<2pt>[#1]\ar@<-2pt>[#1]}
  \entrymodifiers={!!<0pt,0.7ex>+} 



\newcommand{\cA}{{\mathcal A}}  \newcommand{\sA}{{\mathscr A}}
\newcommand{\cB}{{\mathcal B}}  \newcommand{\sB}{{\mathscr B}}
  \newcommand{\sC}{{\mathscr C}}
  \newcommand{\sD}{{\mathscr D}}
\newcommand{\cE}{{\mathcal E}}  \newcommand{\sE}{{\mathscr E}}
\newcommand{\cF}{{\mathcal F}}   
\newcommand{\cG}{{\mathcal G}}
\newcommand{\cH}{{\mathcal H}}

\newcommand{\cK}{{\mathcal K}}
\newcommand{\cL}{{\mathcal L}}  
\newcommand{\cM}{{\mathcal M}}

\newcommand{\cT}{{\mathcal T}}
\newcommand{\cU}{{\mathcal U}}

\newcommand{\cZ}{{\mathcal Z}}


\newcommand{\CC}{{\mathbb C}}
\newcommand{\EE}{{\mathbb E}}
\newcommand{\FF}{{\mathbb F}}

\newcommand{\NN}{{\mathbb N}}

\newcommand{\RR}{{\mathbb R}}

\newcommand{\uc}{{\mathbb S}^1}

\newcommand{\ZZ}{{\mathbb Z}}


\newcommand{\bfS}{{\mathbf S}}




\newcommand{\wGa}{{\widetilde \Gamma}}

\newcommand{\wPU}{\what{\operatorname{PU}}}

\newcommand{\wU}{\what{\operatorname{U}}}

\newcommand{\K}{\operatorname{K}}
\newcommand{\KK}{\operatorname{KK}}
\newcommand{\KR}{\operatorname{KR}}
\newcommand{\KKR}{\operatorname{KKR}}

\newcommand{\wRExt}{{\what{\operatorname{\textrm{Ext}R}}}}

\newcommand{\wRBr}{{\what{\operatorname{\textrm{Br}R}}}}

\newcommand{\EER}{\operatorname{\mathbf{ER}}}

\newcommand{\spin}{\operatorname{Spin}}

\newcommand{\pin}{\operatorname{Pin}}


\newcommand{\grpd}{\xymatrix{\cG \dar[r] & X}}
\newcommand{\gamgpd}{\xymatrix{\Ga \dar[r] & Y}}

\newcommand{\what}{\widehat}



\newcommand{\fr}{{\mathfrak r}}
\newcommand{\fs}{{\mathfrak s}}

\newcommand{\RG}{{\mathfrak {RG}}}

\newcommand{\frc}{\mathfrak{c}}



\newcommand{\To}{\longrightarrow}

\newcommand{\mto}{\longmapsto}



\newcommand{\Ga}{\Gamma}

\newcommand{\del}{\delta}
\newcommand{\al}{\alpha}
\newcommand{\be}{\beta}

\newcommand{\ve}{\varepsilon}
\newcommand{\vp}{\varphi}
\newcommand{\g}{\gamma}

\newcommand{\Om}{\Omega}
\newcommand{\om}{\omega}
\newcommand{\Lam}{\Lambda}

\newcommand{\Isom}{\operatorname{Isom}}
\newcommand{\Aut}{\operatorname{\textsf{Aut}}}

\newcommand{\Cl}{\CC l}

\newcommand{\Co}{{\mathcal C}_0}
\newcommand{\Cor}{\mathfrak{Corr}}

\def\<{\langle}
\def\>{\rangle}
\let\ipscriptstyle=\scriptscriptstyle
\def\lipsqueeze{{\mskip -3.0mu}}
\def\ripsqueeze{{\mskip -3.0mu}}
\def\ipcomma{\nobreak\mathrel{,}\nobreak}
\newbox\ipstrutbox
\setbox\ipstrutbox=\hbox{\vrule height8.5pt
width 0pt}
\def\ipstrut{\copy\ipstrutbox}
\def\lip#1<#2,#3>{\mathopen{\relax_{\ipstrut\ipscriptstyle{
#1}}\lipsqueeze
\langle} #2\ipcomma #3 \rangle}
\def\blip#1<#2,#3>{\mathopen{\relax_{\ipstrut
\ipscriptstyle{ #1}}\lipsqueeze\bigl\langle} #2\ipcomma #3 \bigr\rangle}
\def\rip#1<#2,#3>{\langle #2\ipcomma #3
\rangle_{\ripsqueeze\ipstrut\ipscriptstyle{#1}}}
\def\brip#1<#2,#3>{\bigl\langle #2\ipcomma #3
\bigr\rangle_{\ripsqueeze\ipstrut\ipscriptstyle{#1}}}
\def\angsqueeze{\mskip -6mu}
\def\smangsqueeze{\mskip -3.7mu}
\def\trip#1<#2,#3>{\langle\smangsqueeze\langle #2\ipcomma #3
\rangle\smangsqueeze\rangle_{\ripsqueeze\ipstrut\ipscriptstyle{#1}}}
\def\btrip#1<#2,#3>{\bigl\langle\angsqueeze\bigl\langle #2\ipcomma
#3
\bigr\rangle
\angsqueeze\bigr\rangle_{\ripsqueeze\ipstrut\ipscriptstyle{#1}}}
\def\tlip#1<#2,#3>{\mathopen{\relax_{\ipstrut\ipscriptstyle{
#1}}\lipsqueeze \langle\smangsqueeze\langle} #2\ipcomma #3
\rangle\smangsqueeze\rangle}
\def\btlip#1<#2,#3>{\mathopen{\relax_{\ipstrut\ipscriptstyle{
#1}}\lipsqueeze
\bigl\langle\angsqueeze\bigl\langle} #2\ipcomma #3
\bigr\rangle\angsqueeze\bigr\rangle}

\def\ip(#1|#2){(#1\mid #2)}
\def\bip(#1|#2){\bigl(#1 \mid #2\bigr)}
\def\Bip(#1|#2){\Bigl( #1 \bigm| #2 \Bigr)}
%

\def\h[#1,#2]{[#1,#2]_{H}}

\newcommand{\mydot}{\mathbin{:}}

\def\ipp(#1|#2){\ip({#1}|{#2})_{\pi}}


\hypersetup{
  colorlinks = true,
  urlcolor = blue,
  linkcolor = blue,
  citecolor = red,
  pdfauthor = {Moutuou, E.},
  pdfkeywords = {Twisted K-theory, KK-theory},
  pdftitle = {Moutuou, E. - Thom Isomorphism in twisted groupoid K-theory},
  pdfsubject = {Twisted K-theory},
  pdfpagemode = UseNone
}


\title[Thom isomorphism in Twisted groupoid $\K$-Theory]{Equivariant $\KK$-theory for generalised actions and Thom isomorphism in groupoid twisted $\K$-theory}
\author{El-ka\"ioum M. Moutuou}
\address{Institut \'Elie Cartan de Lorraine - Metz, Universit\'e de Lorraine et CNRS, Ile de Saulcy, 57000 Metz}
\curraddr{School of Mathematics, University of Southampton, Highfield, Southampton, SO17 1BJ, UK}
\email{E.MohamedMoutuou@soton.ac.uk}

\keywords{Equivariant KK-theory, Groupoid actions, Twisted KR-theory, Generalised actions, Thom isomorphism}

\subjclass[2010]{19L50; 19K35; 22A22; 19L47}

\begin{document}

\maketitle

\begin{abstract}
We develop equivariant $\KK$--theory for locally compact groupoid actions by Morita equivalences on real and complex graded $\cstar$-algebras. Functoriality with respect to generalised morphisms and Bott periodicity are discussed. We introduce Stiefel-Whitney classes for real or complex equivariant vector bundles over locally compact groupoids to establish the Thom isomorphism theorem in twisted groupoid $\K$--theory. 
\end{abstract}

\tableofcontents

\section{Introduction}

Equivariant $\KK$-theory for groupoid actions by automorphisms was introduced by Le Gall in~\cite{LeGall:KK_groupoid} generalising Kasparov's work in equivariant $\KK$-theory for groups~\cite{Kasparov:Novikov}. A locally compact groupoid $\grpd$, with source and target maps $s$ and $r$, is said to \emph{act by automorphisms} on a $\cstar$-algebra $A$ if there is a non-degenerate ${}^\ast$--homomorphism from $C_0(X)$ to the centre  $\cZ\cM(A)$ of the multiplier algebra of $A$ ($A$ is then called a $C(X)$-algebra) and an isomorphism of $C(X)$-algebras $\al:s^\ast A\To r^\ast A$ such that the induced ${}^\ast$-isomorphisms $\al_g:A_{s(g)}\To A_{r(g)}$ satisfy $\al_g\circ \al_h=\al_{gh}$ whenever $g$ and $h$ are composable. In that case $A$ is called a $\cG$-algebra~\cite{LeGall:KK_groupoid}. Given two $\cG$-algebras $A$ and $B$, Le Gall has defined a group $\KK_\cG(A,B)$ which is functorial in $B$, cofunctorial in $A$ and in $\cG$ with respect to generalised morphisms. These groups are the main setting for the study of the Baum-Connes conjecture for groupoids~\cite{Tu:BC_groupoids} and play a prominent role in twisted $\K$-theory of Lie groupoids, or more generally, of differentiable stacks~\cite{Tu:Twisted_Poincare,Tu-Xu-Laurent-Gengoux:Twisted_K}. \

In this article we weaken Le Gall's equivariant $\KK$--theory by developing from scratch a nice construction which arose in~\cite{Tu:Twisted_Poincare,Tu-Xu-Laurent-Gengoux:Twisted_K}. More precisely, we consider groupoid actions by Morita equivalences, also called {\it generalised actions}, instead of actions by automorphisms. Moreover, this is done in the general framework of "real" graded $\cstar$-algebras (complex $\ZZ_2$-graded $\cstar$-algebras endowed with $^*$-involutions compatible with the grading~\cite{Kasparov:Operator_K}) that combine real and complex $\ZZ_2$-graded $\cstar$-algebras. Recall~\cite{Moutuou:Real.Cohomology} that an involution on a topological groupoid $\cG$ is a groupoid isomorphism $\tau:\cG\To \cG$ such that $\tau^2=\id$. In that case, we say that $\cG$ is a "real" groupoid. A generalised action of a locally compact "real" groupoid $\grpd$ on a "real" graded $\cstar$-algebra $A$ consists of a graded Fell bundle with involution (~\cite{Kumjian:Fell,Moutuou:Thesis} $\cA\To \cG$ such that $C_0(X;\cA_{|X})\cong A$, where $C_0(X;\cA_{|X})$ is the algebra of sections vanishing at infinity of the restriction of $\cA$ over the unit space $X$.  Given two "real" graded $\cstar$-algebras $A$ and $B$ endowed with generalised $\cG$-actions, a group $\KKR_\cG(A,B)$ is then defined by means of equivariant \emph{correspondences} and \emph{connections}~\cite{Cuntz-Skandalis:Cones, Kasparov:Novikov}. Our main interest in exploring this theory is to establish a Thom isomorphism theorem in twisted $\K$-theory of groupoids with involutions. \\

We start by quickly giving in section 1 preliminary notions and tools used throughout the paper. {\it Correspondences} between "real" graded $\cstar$-algebras are discussed in section 2. The definition and the usual first properties of the equivariant $\KK$-theory $\KKR_\cG(A,B)$ for "real" graded $\cstar$-algebras acted upon by a locally compact second countable groupoid $\grpd$ with involution are given in section 3. In section 4, we prove that the groups $\KKR_\cG(A,B)$ are cofunctorial with respect to $\cG$ in the category $\RG$ whose objects are locally compact second-countable "real" groupoids and whose morphisms are Hilsum-Skandalis (or generalised) morphisms with involutions. In section 5 we discuss the notion of equivariant $\KKR$-equivalence for generalised actions. We use the functorial property in the category $\RG$ to establish Bott periodicity. In section 7, we introduce Stiefel-Whitney classes $w(V)$ of an equivariant Eucludean vector bundle $V$ over a locally compact "real" groupoid. These classes correspond to elements in the "real" graded Brauer group defined in~\cite{Moutuou:Brauer_Group} where we have associated to any locally compact Hausdorff second-countable "real" groupoid $\grpd$ the abelian group $\wRBr(\cG)$ generated by Morita equivalence classes of \emph{"real" graded Dixmier-Douady bundles} (\emph{i.e}. complex graded Dixmier-Douady bundles~\cite{Tu:Twisted_Poincare} with involutions) over $\cG$. Now, let $V\To X$ be a $\cG$-equivariant Euclidean vector bundle with involution locally induced from that of the "real" space $\RR^{p,q}$ (this is the space $\RR^p\times \RR^q$ equipped with the involution $(x,y)\mto (x,-y)$); $V$ is called a \emph{"real" Euclidean vector bundle of type $p-q$} over $\grpd$. Associated to such a bundle, there are two different "real" graded Dixmier-Douady bundles $\Cl(V)$ (the \emph{Clifford bundle} associated to $V$) and $\cA_V$ over $\cG$. We compare the classes of $\cA_V$ and $\Cl(V)$ in the Brauer group by working on the cohomological picture of $\wRBr(\cG)$. Specifically, it is shown in~\cite{Moutuou:Brauer_Group} that for a groupoid with involution $\tau:\cG\To \cG$, there is an isomorphism of abelian groups 
\[
DD: \wRBr(\cG) \overset{\cong}{\To}\check{H}R^0(\cG_\bullet,\ZZ_8)\oplus \check{H}R^1(\cG_\bullet,\ZZ_2)\ltimes \check{H}R^2(\cG_\bullet,\uc),
\]
where $\check{H}R^\ast$ is an equivariant groupoid cohomology with respect to involutions, and where $\uc$ is endowed with the involution given by complex conjugation and $\ZZ_2$ and $\ZZ_8$ are given the trivial involutions (see~\cite{Moutuou:Real.Cohomology}). If $DD(\cA)=(n,\al,\be)$, we say that $\cA$ is a "real" graded D-D bundle \emph{of type} $n \mod 8$. We then compare the values of $\Cl(V)$ and $\cA_V$ on the right hand side of the above isomorphism by using the \emph{Stiefel-Whitney classes}. More precisely, we show that (Theorem~\ref{thm:Calcul-de-Cl(V)})
\[
DD(\Cl(V)=(q-p,w(V))=(q-p,DD(\cA_V).
\]
 
This result along with the functorial properties in equivariant $\KKR$--theory for generalised actions are crucial for proving the Thom isomorphism theorem (Theorem~\ref{thm:Thom-isomorphism}) in the twisted $\K$-theory of locally compact groupoids with involutions, which is the purpose of section 8. Recall~\cite{Moutuou:Thesis} that if $\cG$ is a "real" groupoid, then associated to any representative $\cA$ of a class $[\cA]$ in the "real" graded Brauer group $\wRBr(\cG)$, there is a "real" graded $\cstar$-algebra $\cA\ltimes_r\cG$. The $[\cA]$--twisted $\KR$--groups of $\cG$ are defined as $\KR_\cA^{-n}(\cG^\bullet):=\KR_n(\cA\ltimes_r\cG)$~\cite{Kasparov:Operator_K}. We prove that for a "real" Euclidean vector bundle $V$ of type $p-q$ over $\grpd$ with projection $\pi:V\To X$, and a "real" graded Dixmier-Douady bundle $\cA$, there is an isomorphism of abelian groups 
\[
\KR^\ast_{\pi^\ast\cA}((\pi^\ast\cG)^\bullet)=\KR^\ast_{\cA+\Cl(V)}(\cG^\bullet),
\]
where $\pi^\ast\cG\rightrightarrows V$ is the pull-back of the groupoid $\grpd$ along $\pi$. Moreover, if $V$ is $\KR$--oriented, that is $w(V)=0$, then $\KR^\ast_{\pi^\ast\cA}((\pi^\ast\cG)^\bullet)=\KR^{\ast-p+q}(\cG^\bullet)$. As far as we know, this result was known only in the case of twistings by \emph{Azymaya} bundles over topological spaces~\cite{Donovan-Karoubi, Karoubi:Twisted} and in the case of twisted $\K$-theory of \emph{bundle gerbes}~\cite{Carey-Wang:Thom}.


\section{Preliminaries and conventions}

1.1. A \emph{"real" graded} \ $\cstar$-algebra is a $\cstar$-algebra $A$ endowed with a $\ZZ_2$-grading $A=A_0\oplus A_1$ and an involution $\tau:A\To A$ of $\cstar$-algebras compatible with the grading. In other words, $A$ is the complexification of a graded real (real in the usual sense) $\cstar$-algebra. More generally, "real" graded Banach spaces (or algebras) are defined in a similar way. It is a straightforward result that the category of "real" graded $\cstar$-algebras is isomorphic to the category of graded real $\cstar$-algebras under the operation of taking the complexification of a graded real $\cstar$-algebra in one way, and the "\emph{realification}" of a "real" graded $\cstar$-algebra (its invariant part under the involution), in the other. It follows that any theory built upon the category of "real" graded $\cstar$-algebras applies naturally to both the category of graded complex $\cstar$-algebras and the category of graded real $\cstar$-algebras. In view of this observation, G. Kasparov focused attention on $\cstar$-algebras with involutions when proving fundamental results in $\KK$-theory in his pioneering paper~\cite{Kasparov:Operator_K}. If $A$ is a "real" graded $\cstar$-algebra, a graded Hilbert $A$-module (say left) $E$ is "real" graded if it is a "real" graded Banach spaces with the property that its involution satisfies $\overline{a\cdot e}=\bar{a}\cdot \bar{e}$, and $\overline{\<e,f\>}=\<\bar{e},\bar{f}\>$ for all $a\in A$ and $e,f \in E$.\\

1.2. Recall~\cite{Moutuou:Real.Cohomology} that a groupoid $\grpd$ is \emph{"real"} if it is endowed with a groupoid isomorphism $\tau:\cG\To \cG$ such that $\tau^2=\id$. A \emph{"real" graded Fell bundle} (resp. \emph{u.s.c. Fell bundle}, where "u.s.c." stands for {\it upper semi-continuous}) over $\cG$ is a Fell bundle (resp. u.s.c Fell bundle)~\cite{Kumjian:Fell, Tu-Xu-Laurent-Gengoux:Twisted_K} $\pi:\cE\To \cG$ such that every fibre $\cE_g$ is a graded complex Banach space, and there is an involution $\cE\ni e\mto \bar{e}\in \cE$ compatible with the grading on the fibres and satisfying $\tau(\pi(e))=\pi(\bar{e})$. Note that if $\cE\To \cG$ is a "real" graded (u.s.c.) Fell bundle, then for all $g\in \cG$, the graded complex Banach space $\cE_g$ is a graded Morita $\cE_{r(g)}, \cE_{s(g)}$ --equivalence. The reduced $\cstar$-algebra $\cstar_r(\cG;\cE)$ (~\cite{Kumjian:Fell})associated to a "real" graded Fell bundle is naturally equipped with the structure of "real" graded $\cstar$-algebra. We refer to~\cite{Moutuou:Thesis} for more details on "real" graded Fell bundles and their $\cstar$-algebras. \\

1.3. Throughout the paper, all our $\cstar$-algebras, Hilbert modules, (u.s.c.) Fell bundles, continuous field of $\cstar$-algebras or Banach spaces, Dixmier-Douady bundles, are assumed "real" graded, unless otherwise stated. All our groupoids are "real", locally compact, Hausdorff and second-countable, unless otherwise stated. Thus, to avoid annoying repetitions, we will often omit the terms ""real"" and ""real" graded". We will often simply write $\cG, \Ga$, etc., for the "real" groupoids $\grpd, \gamgpd$, etc. The source and target maps of our groupoids will systematically be denoted by $s$ and $r$, respectively. Moreover, we assume the reader is familiar with the language of groupoids, groupoid actions on spaces, (graded) extensions, and groupoid cohomology, which is contained in many articles~\cite{Kumjian-Muhly-Renault-Williams:Brauer,Tu-Xu-Laurent-Gengoux:Twisted_K, Tu:Twisted_Poincare,Moutuou:Thesis}. Unless otherwise stated, all our groupoids are assumed Hausdorff, second countable, and locally compact.

\section{Equivariant $\cstar$-correspondences}

Let $A$ and $B$ be $\cstar$-algebras. A \emph{$\cstar$-correspondence} from $A$ to $B$ is a pair $(\cE,\vp)$ where $\cE$ is a Hilbert (right) $B$-module, and $\vp:A\To \cL(\cE)$ is a non-degenerate homomorphism of $\cstar$-algebras. We then view $\cE$ as a "real" graded left $A$-module by $a\cdot e:=\vp(a)e$. When there is no risk of confusion we will write ${}_A\cE_B$ for the $\cstar$-correspondence $(\cE,\vp)$. We also say that $\cE$ is a ("real" graded) $A,B$-correspondence.

If $(\cE,\vp)$ and $(\cF,\psi)$ are $\cstar$-correspondences from $A$ to $B$ and from $B$ to $C$, respectively, we define the $\cstar$-correspondence $(\cF,\psi)\circ(\cE,\vp)$ from $A$ to $C$ by $(\cE\hat{\otimes}_\psi\cF,\vp\hat{\otimes}\id)$; this $\cstar$-correspondence is called the \emph{composition of $(\cF,\psi)$ by $(\cE,\vp)$}.

An isomorphism of $\cstar$-correspondences from ${}_A\cE_B$ to ${}_A\cF_B$ is a "real" degree $0$ unitary $u\in \cL_B(\cE,\cF)$ such that $u\circ \vp_\cE(a)=\vp_\cF(a)\circ u$ for all $a\in A$. 	

\begin{df}
Let $\cG$ be a groupoid, and $A$ a $\cstar$-algebra. A \emph{generalised $\cG$-action on $A$} consists of a u.s.c. Fell bundle $\sA\To \cG$ such that $A\cong C_0(X;\sA)$, where, as usual, we have denoted $C_0(X;\sA)$ for $C_0(X;\sA_{|X})$.	
\end{df}

\begin{ex}
If $\cA\To X$ is a u.s.c. field of $\cstar$-algebras, then the u.s.c. Fell bundle $s^\ast\cA\To \cG$ is a generalised $\cG$-action on $A=C_0(X;\cA)$.	
\end{ex}

\begin{rem}
As mentioned in~\cite[\S 6.2]{Tu-Xu-Laurent-Gengoux:Twisted_K}, a generalised action is in fact an action by Morita equivalences, which justifies the terminology. Indeed, if $\sA$ is a generalised $\cG$-action on $A$, then from the properties of Fell bundles we see that for $g\in \cG$, $\sA_{g^{-1}}$ is a graded $\sA_{s(g)}$-$\sA_{r(g)}$-Morita equivalence.	
\end{rem}

Denote by $i:\cG\To \cG$ the inversion map. If $A$ is a "real" graded $\cstar$-algebra endowed with a generalized $\cG$-action $\sA$, we define $\cF_b(i^\ast\sA)$ as the "real" graded Banach algebra of norm-bounded continuous functions vanishing at infinity $a':\cG\ni g\mto a'_g\in \sA_{g^{-1}}$; the "real" structure is given by $(\bar{a'})_g:=\overline{(a'_{\bar{g}})}$, and the grading is inherited from that of $\sA$. Observe that $\cF_b(i^\ast\sA)$ is naturally a "real" graded (right) Hilbert $r^\ast A$-module under the module structure
	$$(a'\cdot a)_g :=a'_g\cdot a_g, \quad \text{for} \ a'\in \cF_b(i^\ast\sA), a\in r^\ast A=C_0(\cG;r^\ast (\sA_X)),$$ and the graded scalar product
	$$\<a',a"\>_g := (a'_g)^\ast\cdot a"_g, \quad a',a"\in \cF_b(i^\ast\sA).$$
Also, $\cF_b(i^\ast\sA)$ has the structure of "real" graded $s^\ast A$-module by setting $$(\xi\cdot a')_g:=\xi(g)\cdot a'_g, \quad {\rm for} \ \xi\in s^\ast A, a'\in \cF_b(i^\ast\sA), \ \text{and} \  g\in \cG.$$

Suppose now that $(\cE,\vp)$ is a $\cstar$-correspondence from $A$ to $B$, and $\sA$ and $\sB$ are generalised $\cG$-actions on the "real" graded $\cstar$-algebras $A$ and $B$, respectively. Then, it is easy to check that we have two $\cstar$-correspondences ${}_{s^\ast A}(\cF_b(i^\ast\sA)\hat{\otimes}_{r^\ast A}r^\ast\cE)_{r^\ast B}$ and  ${}_{s^\ast A}(s^\ast\cE\hat{\otimes}_{s^\ast B}\cF_b(i^\ast\sB))_{r^\ast B}$ with respect to the maps 
\[\Id\hat{\otimes}r^\ast\vp:s^\ast A\To \cL_{r^\ast B}(\cF_b(i^\ast\sA)\hat{\otimes}_{r^\ast A}r^\ast\cE),\] and 
\[s^\ast\vp\hat{\otimes}\Id:s^\ast A\To \cL_{r^\ast B}(s^\ast\cE\hat{\otimes}_{s^\ast B}\cF_b(i^\ast\sB)),\]
respectively. The link between these two correspondences "measures" the $\cG$-\emph{equivariance} of ${}_A\cE_B$. In particular, we give the following definition.

\begin{df}
Let $A, B, \sA, \sB$, $\cE$ be as above. A $\cstar$-correspondence ${}_A\cE_B$ is said $\cG$-\emph{equivariant} if there is an isomorphism of $\cstar$-correspondences 
\[W\in \cL(s^\ast\cE\hat{\otimes}_{s^\ast B}\cF_b(i^\ast\sB), \cF_b(i^\ast \sA)\hat{\otimes}_{r^\ast A}r^\ast\cE),\] such that for every $(g,h)\in \cG^{(2)}$, the following diagram commutes

\begin{equation}~\label{eq:equiv-corresp}
	\xymatrix{\cE_{s(h)}\hat{\otimes}_{B_{s(h)}}\sB_{h^{-1}}\hat{\otimes}_{B_{r(h)}}\sB_{g^{-1}} \ar[rr]^{W_h\hat{\otimes}_{B_{r(h)}}\Id_{\sB_{g^{-1}}}} \eq[d] && \sA_{h^{-1}}\hat{\otimes}_{A_{r(h)}}\cE_{s(g)}\hat{\otimes}_{B_{s(g)}}\sB_{g^{1}} \ar[d]^{\Id_{\sA_{h^{-1}}}\hat{\otimes}_{A_{r(g)}}W_g} \\
	\cE_{s(gh)}\hat{\otimes}_{B_{s(gh)}}\sB_{h^{-1}g^{-1}} \ar[rd]^{W_{gh}} && \sA_{h^{-1}}\hat{\otimes}_{A_{r(h)}}\sA_{g^{-1}}\hat{\otimes}_{A_{r(g)}}\cE_{r(g)} \eq[ld]\\
	& \sA_{h^{-1}g^{-1}}\hat{\otimes}_{A_{r(gh)}}\cE_{r(gh)}}	
	\end{equation}	
where the isomorphisms $\sA_{h^{-1}}\hat{\otimes}_{A_{r(h)}}\sA_{g^{-1}}\cong \sA_{h^{-1}g^{-1}}$ and $\sB_{h^{-1}}\hat{\otimes}_{B_{s(g)}}\sB_{g^{-1}}\cong \sB_{h^{-1}g^{-1}}$ come from the properties of Fell bundles. 	
\end{df}

\begin{lem}
Let $A,B$, and $C$ be $\cstar$-algebras endowed with generalized $\cG$-actions $\sA,\sB$, and $\sC$, respectively. If ${}_A\cE_B$, and ${}_B\cF_C$ are $\cG$-equivariant $\cstar$-correspondences, their composition $\cF\circ \cE$ is a $\cG$-equivariant "real" graded $A,C$-correspondence. Therefore, there is a category $\Cor_\cG$ whose objects are "real" graded $\cstar$-algebras endowed with generalized $\cG$-actions, and whose morphisms are isomorphism classes of equivariant correspondences.
\end{lem}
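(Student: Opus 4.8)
The plan is to manufacture an equivariance isomorphism on the composite $\cF\circ\cE=\cE\hat\otimes_\psi\cF$ out of those of $\cE$ and $\cF$, to check that it is an isomorphism of the relevant $s^\ast A,r^\ast C$-correspondences satisfying the cocycle diagram, and then to deduce the categorical assertions formally. Write $W$ and $V$ for the equivariance isomorphisms of ${}_A\cE_B$ and ${}_B\cF_C$, with fibres
\[
W_g\colon \cE_{s(g)}\hat\otimes_{B_{s(g)}}\sB_{g^{-1}}\To \sA_{g^{-1}}\hat\otimes_{A_{r(g)}}\cE_{r(g)},\qquad V_g\colon \cF_{s(g)}\hat\otimes_{C_{s(g)}}\sC_{g^{-1}}\To \sB_{g^{-1}}\hat\otimes_{B_{r(g)}}\cF_{r(g)}.
\]

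First I would define the candidate structure globally as
\[
U=\bigl(W\hat\otimes_{r^\ast B}\Id_{r^\ast\cF}\bigr)\circ\bigl(\Id_{s^\ast\cE}\hat\otimes_{s^\ast B}V\bigr),
\]
which makes sense because $W$ is right $r^\ast B$-linear (hence may be amplified by $r^\ast\cF$ over $r^\ast B$) and $V$ is left $s^\ast B$-linear (hence $s^\ast\cE$ may be tensored on the left). Using $s^\ast(\cE\hat\otimes_B\cF)=s^\ast\cE\hat\otimes_{s^\ast B}s^\ast\cF$ and $r^\ast(\cE\hat\otimes_B\cF)=r^\ast\cE\hat\otimes_{r^\ast B}r^\ast\cF$, a bookkeeping of the internal tensor products identifies $U$ as an adjointable map
\[
U\in\cL\!\left(s^\ast(\cE\hat\otimes_B\cF)\hat\otimes_{s^\ast C}\cF_b(i^\ast\sC),\ \cF_b(i^\ast\sA)\hat\otimes_{r^\ast A}r^\ast(\cE\hat\otimes_B\cF)\right).
\]
As a composite of two isomorphisms of $s^\ast A,r^\ast C$-correspondences it is again one, and fibrewise it restricts to $U_g=(W_g\hat\otimes\Id)\circ(\Id\hat\otimes V_g)$.

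The heart of the argument is the cocycle identity, that is, commutativity of the diagram~\eqref{eq:equiv-corresp} with $U$ in place of $W$. I would verify this fibrewise over $(g,h)\in\cG^{(2)}$ by pasting the pentagon satisfied by $V$, amplified on the left by $\cE$, to the pentagon satisfied by $W$, amplified on the right by $\cF$. The two pentagons are glued along the naturality squares expressing that $T\mapsto\Id\hat\otimes T$ and $T\mapsto T\hat\otimes\Id$ are functorial and commute with one another, and the factors are reorganised by means of the Fell-bundle multiplication isomorphisms $\sB_{h^{-1}}\hat\otimes\sB_{g^{-1}}\cong\sB_{h^{-1}g^{-1}}$ and $\sA_{h^{-1}}\hat\otimes\sA_{g^{-1}}\cong\sA_{h^{-1}g^{-1}}$. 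Since $W$ and $V$ are degree-$0$ "real" unitaries, throughout this rearrangement one must keep track of the graded (Koszul) signs produced by transposing graded factors and of compatibility with the "real" involutions; this sign-and-reality bookkeeping, together with the diagram chase, is the step I expect to be the main obstacle.

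For the categorical statements the work is formal. The identity endomorphism of $A$ is the correspondence ${}_AA_A$ with the tautological structure given fibrewise by the multiplication isomorphisms $\sA_{s(g)}\hat\otimes\sA_{g^{-1}}\cong\sA_{g^{-1}}\cong\sA_{g^{-1}}\hat\otimes\sA_{r(g)}$, and one checks that composing with it recovers, after the canonical unitors $A\hat\otimes_A\cE\cong\cE\cong\cE\hat\otimes_B B$, the original structure. Associativity follows from the associativity of the internal tensor product together with the fact that $U$ is built one factor at a time: computing the equivariance isomorphism of a triple composite $\cE\hat\otimes_B\cF\hat\otimes_C\cH$ either as $(\cE\hat\otimes_B\cF)\hat\otimes_C\cH$ or as $\cE\hat\otimes_B(\cF\hat\otimes_C\cH)$ yields, after the associators, the same triple composite of amplified equivariance maps. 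Finally, if $u\colon\cE\To\cE'$ and $v\colon\cF\To\cF'$ are isomorphisms of equivariant correspondences, then $u\hat\otimes v$ intertwines the structures $U$ and $U'$, so the composition descends to isomorphism classes, completing the construction of $\Cor_\cG$.
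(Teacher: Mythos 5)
Your proposal is correct and takes essentially the same route as the paper: the equivariance operator for the composite is defined by exactly the same formula $(W\hat{\otimes}_{r^\ast B}\Id_{r^\ast\cF})\circ(\Id_{s^\ast\cE}\hat{\otimes}_{s^\ast B}V)$ via the identifications $s^\ast(\cE\hat{\otimes}_\psi\cF)\cong s^\ast\cE\hat{\otimes}_{s^\ast B}s^\ast\cF$ and $r^\ast(\cE\hat{\otimes}_\psi\cF)\cong r^\ast\cE\hat{\otimes}_{r^\ast B}r^\ast\cF$, after which the cocycle diagram~\eqref{eq:equiv-corresp} is verified (the paper simply declares this step straightforward, whereas you sketch the fibrewise pentagon-pasting). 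Your extra verification of the categorical axioms (identities, associativity, descent to isomorphism classes) fills in details the paper leaves implicit but does not constitute a different approach.
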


\begin{proof}
Suppose $W'\in \cL_{r^\ast B}(s^\ast\cE\hat{\otimes}_{s^\ast B}\cF_b(i^\ast\sB),\cF_b(i^\ast\sA)\hat{\otimes}_{r^\ast A}r^\ast\cE)$ is an isomorphism of "real" graded $s^\ast A,r^\ast B$-correspondences and $W''\in \cL_{r^\ast C}(s^\ast\cF\hat{\otimes}_{s^\ast C}\cF_b(i^\ast\sC),\cF_b(i^\ast \sB)\hat{\otimes}_{r^\ast B}r^\ast\cF)$ is an isomorphism of $s^\ast B,r^\ast C$-correspondences implementing $\cG$-equivarience. We define the isomorphism of "real" graded $s^\ast A,r^\ast C$-correspondences 
\[W\mydot s^\ast(\cE\hat{\otimes}_\psi \cF)\hat{\otimes}_{s^\ast C}\cF_b(i^\ast \sC)\To \cF_b(i^\ast\sA)\hat{\otimes}_{r^\ast A}r^\ast(\cE\hat{\otimes}_\psi \cF) \]
by setting $W\mydot=(W'\hat{\otimes}_{r^\ast B}\Id_{r^\ast\cF})\circ (\Id_{s^\ast\cE}\hat{\otimes}_{s^\ast B}W'')$, via the identification
\begin{align*}
	s^\ast(\cE\hat{\otimes}_\psi\cF) & \cong s^\ast\cE\hat{\otimes}_{s^\ast B}s^\ast\cF	\\
	r^\ast(\cE\hat{\otimes}_\psi\cF) & \cong r^\ast\cE\hat{\otimes}_{r^\ast B}r^\ast \cF.
	\end{align*}
Now it is straightforward that commutativity of the diagram~\eqref{eq:equiv-corresp} holds for $W$.	
		
\end{proof}


\section{The $\KKR_\cG$-bifunctor}

To define the equivariant $\KKR$-groups, we  need some more notions (cf.~\cite[Appendix A]{Cuntz-Skandalis:Cones}, ~\cite[Definition 6.5]{Tu-Xu-Laurent-Gengoux:Twisted_K}). Let $A$, $B$ be "real" graded $\cstar$-algebras. Let $\cE_1$ be "real" graded Hilbert $A$-module, and $\cE_2$ a "real" graded $A,B$-correspondence. Put $\cE=\cE_1\hat{\otimes}_A\cE_2$. For $\xi\in \cE_1$, let $T_\xi\in \cL_B(\cE_2,\cE)$ be given by $T_\xi(\eta):=\xi\hat{\otimes}_A\eta$ (with adjoint given by $T^\ast_\xi(\xi_1\hat{\otimes}_A\eta):=\<\xi,\xi_1\>\eta$). Observe that $\overline{T_\xi}=T_{\bar{\xi}}$, so that $T_\xi$ is "real" if and only if $\xi$ is.

Now let $A,B$, $\cE_1$, and $\cE_2$ be as above. Let $F_2\in \cL(\cE_2)$, and $F\in \cL(\cE)$. We say that $F$ is an \emph{$F_2$-connection for $\cE_1$} if for every $\xi\in \cE_1$:
\begin{align*}
	T_\xi F_2-(-1)^{|\xi|\cdot |F_2|}FT_\xi & \in \cK(\cE_2,\cE),\\
	F_2T^\ast_\xi-(-1)^{|\xi|\cdot|F_2|}T^\ast_\xi F & \in \cK(\cE,\cE_2).	
	\end{align*}

\begin{rem}~\label{rem:F-connection}
It is easy to check that $F$ is an $F_2$-connection for $\cE_1$ if and only if for every $\xi\in \cE_1$, 
\[[\theta_\xi,F_2\oplus F] \in \cK(\cE_2\oplus \cE), \]
where $\theta_\xi:=\begin{pmatrix}0&T^\ast_\xi\\ T_\xi&0\end{pmatrix}\in \cL_B(\cE_2\oplus \cE)$ (~\cite[Definition 8]{Skandalis:Remarks_KK}).	
\end{rem}

\begin{df}~\label{df:Kas-corresp}
Let $A$ and $B$ be Rg $\cstar$-algebras endowed with generalized "real" $\cG$-actions. A \emph{$\cG$-equivariant (or just equivariant if $\cG$ is understood) Kasparov $A,B$-correspondence} is a pair $(\cE,F)$ where $\cE$ is a $\cG$-equivariant Rg $A,B$-correspondence, $F$ is a "real" operator of degree $1$ in $\cL(\cE)$ such that for all $a\in A$,
\begin{itemize}
	\item[(i)] \ \ $a(F-F^\ast)\in \cK(\cE)$;
	\item[(ii)] \ \ $a(F^2-1)\in \cK(\cE)$;
	\item[(iii)] \ \ $[a,F]\in \cK(\cE)$;
	\item[(iv)] \ \ $W(s^\ast F\hat{\otimes}_{s^\ast B}\Id)W^*\in \cL(\cF_b(i^*\sA)\hat{\otimes}_{r^*A}r^*\cE)$ is an $r^*F$-connection for $\cF_b(i^*\sA)$.	
	\end{itemize}
We say that $(\cE,F)$ is \emph{degenerate} if the elements $$a(F-F^\ast),a(F^2-1),[a,F], \ {\rm and\ } [\theta_\xi,r^\ast F\oplus W(s^\ast F\hat{\otimes}_{s^\ast B}\Id)W^\ast]$$ are $0$ for all $a\in A, \xi\in \cF_b(i^\ast \sA)$.			
\end{df}

\begin{rem}
Notice that conditions (i)-(iii) are the usual ones presented in any text about $\KKR$-theory. To digest condition (iv), suppose $\cG$ acts on $A$ and $B$ by automorphisms and ${}_A\cE_B$ is a $\cG$-equivariant $A,B$-correspondence. Then the isomorphism $W$ induces a continuous family of graded isomorphisms $\widetilde{W}_g:\cE_{s(g)}\To \cE_{r(g)}$ via the identifications 
$$\cE_{s(g)}\hat{\otimes}_{\sB_{s(g)}}\sB_{r(g)}\cong \cE_{s(g)}\hat{\otimes}_{\sB_{s(g)}}\sB_{s(g)}\cong \cE_{s(g)}, \quad {\rm and } \quad \cA_{r(g)}\hat{\otimes}_{\sA_{r(g)}}\cE_{r(g)}\cong \cE_{r(g)}.$$
 Note that from the commutativity of~\eqref{eq:equiv-corresp}, $\widetilde{W}$ verifies $\widetilde{W}_{gh}=\widetilde{W}_g\circ \widetilde{W}_h$ for all $(g,h)\in \cG^{(2)}$; so that $\widetilde{W}$ is a Rg $\cG$-action by automorphisms on $\cE$. Moreover, it is straightforward to see that the map $\vp:A\To \cL(\cE)$ is $\cG$-equivariant; \emph{i.e.} $\widetilde{W}_g\vp(a)\widetilde{W}_g^\ast=\vp(\al_g(a))$ for all $g\in \cG$ and $a\in A_{s(g)}$. Now, if $(\cE,F)$ is a $\cG$-equivariant Kasparov $A,B$-correspondence, then condition (iv) of Definition~\ref{df:Kas-corresp} implies that $\widetilde{W}_gF_{s(g)}\widetilde{W}_g^\ast-F_{r(g)}\in \cK(\cE_{r(g)})$ for all $g\in \cG$ (take $\xi=0$), so that we recover Le Gall's definition of an equivariant Kasparov bimodule (~\cite[D\'efinition 5.2]{LeGall:KK_groupoid}). We will then refer to condition (iv) as the condition of \emph{invariance modulo compacts}.
\end{rem}

\begin{df}
Two equivariant Kasparov $A,B$-correspondences $(\cE_i,F_i),i=1,2$ are \emph{unitarily equivalent} if there exists an isomorphism of $A,B$-correspondences $u\in \cL(\cE_1,\cE_2)$ such that $F_2=uF_1u^\ast$; in this case we write $(\cE_1,F_1)\sim_u(\cE_2,F_2)$. The set of unitarily equivalence classes of equivariant Kasparov $A,B$-correspondences is denoted by $\EER_\cG(A,B)$.
\end{df}

Let the $\cstar$-algebra $A$ be endowed with a generalised $\cG$-action $\sA$. Then the $\cstar$-algebra $A[0,1]:=C([0,1],A)$ (with the grading $(A[0,1])^i=A^i[0,1],i=0,1$, and "real" structure $\bar{f}(t):=\overline{f(t)}, \ \text{for} \ f\in A[0,1], t\in [0,1]$) is equipped with the generalised $\cG$-action given by the "real" graded u.s.c Fell bundle $\sA[0,1]\To \cG$ with $(\sA[0,1])_g=\sA_g[0,1]$.

\begin{df}
Let $A$ and $B$ be $\cstar$-algebras endowed with generalised $\cG$-actions. A \emph{homotopy} in $\EER_\cG(A,B)$ is an element $(\cE,F)\in \EER_\cG(A,B[0,1])$. Two elements $(\cE_i,F_i), i=0,1$ of $\EER_\cG(A,B)$ are said to be \emph{homotopically equivalent} if there is a homotopy $(\cE,F)$ such that $(\cE\hat{\otimes}_{ev_0}B,F\hat{\otimes}_{ev_0}\Id)\sim_u (\cE_0,F_0)$, and $(\cE\hat{\otimes}_{ev_1}B,F\hat{\otimes}_{ev_1}\Id)\sim_u (\cE_1,F_1)$, where for all $t\in [0,1]$, the evolution map $ev_t:B[0,1] \To B$ is the surjective ${}^\ast$-homomorphism $ev_t(f):=f(t)$. The set of homotopy classes of elements of $\EER_\cG(A,B)$ is denoted by $\KKR_\cG(A,B)$. 
\end{df}

\begin{ex}
Let $A$ be a $\cstar$-algebra equipped with a generalised "real" $\cG$-action. Then there is a canonical element $\id_A \in \KKR_\cG(A,A)$ given by the class of the equivariant Kasparov $A,A$-correspondence $(A,0)$, where $A$ is naturally viewed as a $A,A$-correspondence via the homomorphism $A\To \cL_A(A)=A$ defined by left multiplication by elements of $A$.	
\end{ex}

\begin{df}
Given two elements $x_1,x_2\in \KKR_\cG(A,B)$, their sum is given by $x_1\oplus x_2=(\cE_1\oplus\cE_2,F_1\oplus F_2)$, where $(\cE_i,F_i), i=1,2$ is any representative of $x_i$.	
\end{df}

Let $A,B$, and $D$ be $\cstar$-algebras endowed with the generalised  $\cG$-actions $\sA, \sB,$ and $\sD$, respectively. Then the $\cstar$-algebras $A\hat{\otimes}_{C_0(X)}D\cong C_0(X;\sA\hat{\otimes}_X\sD)$ and $B\hat{\otimes}_{C_0(X)}D\cong C_0(X;\sB\hat{\otimes}_X\sD)$ are provided with the generalised "real" $\cG$-actions given by the "real" graded u.s.c. Fell bundles $\sA\hat{\otimes}_\cG\sD\To \cG$ and $\sB\hat{\otimes}_\cG\sD\To \cG$. Now if ${}_A\cE_B$ is a $\cG$-equivariant $\cstar$-correspondence via the non-degenerate homomorphism $\vp:A\To \cL(\cE)$, then $\cE\hat{\otimes}_A A\hat{\otimes}_{C_0(X)}D$ is a $\cG$-equivariant $A\hat{\otimes}_{C_0(X)}D, B\hat{\otimes}_{C_0(X)}D$-correspondence via the map 
$$\vp\hat{\otimes}\Id_A\hat{\otimes}\Id_D:A\hat{\otimes}_{C_0(X)}D\To \cL_{B\hat{\otimes}_{C_0(X)}D}(\cE\hat{\otimes}_AA\hat{\otimes}_{C_0(X)}D).$$

\begin{df}
Let $A,B$, and $D$ be as above. We define the group homomorphism
\[\tau_D: \KKR_\cG(A,B)\To \KKR_\cG(A\hat{\otimes}_{C_0(X)}D,B\hat{\otimes}_{C_0(X)}D)\]
by setting 
\[\tau_D([\cE,F]):=[(\cE\hat{\otimes}_AA\hat{\otimes}_{C_0(X)}D,F\hat{\otimes}\id_A\hat{\otimes}\id_D)], \ {\rm for\ } [(\cE,F)]\in \KKR_\cG(A,B). \]
	
\end{df}

The following can be proven as in the ordinary $\KK$-theory where no generalised actions are involved (see~\cite[\S4]{Kasparov:Operator_K}).

\begin{pro}
Under the operations of direct sum, $\KKR_\cG(A,B)$ is an abelian group. Moreover, the assignement $(A,B)\mto \KKR_\cG(A,B)$ is a bifunctor, covariant in $B$ and contravariant in $A$, from the category $\Cor_\cG$ to the category $\mathfrak{Ab}$ of abelian groups	
\end{pro}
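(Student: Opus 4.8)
The plan is to follow Kasparov's strategy from~\cite[\S4]{Kasparov:Operator_K} verbatim, inserting at each step a check that the generalised-action data, i.e.\ the isomorphism $W$ implementing condition (iv), is carried along. First I would record that $\oplus$ makes $\KKR_\cG(A,B)$ a commutative monoid: the natural isomorphisms $\cE_1\oplus\cE_2\cong\cE_2\oplus\cE_1$ and $(\cE_1\oplus\cE_2)\oplus\cE_3\cong\cE_1\oplus(\cE_2\oplus\cE_3)$ of equivariant correspondences transport the operators $F_i$ and the isomorphisms $W_i$ diagonally, so they respect the Kasparov structure and descend to homotopy classes, and the class $[(0,0)]$ of the zero module is neutral because $(\cE,F)\oplus(0,0)\sim_u(\cE,F)$. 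The first genuine point is that every degenerate $(\cE,F)$ represents $[(0,0)]$: I would exhibit the cone homotopy $(C_0((0,1];\cE),F)\in\EER_\cG(A,B[0,1])$ obtained by applying $F$ and $W$ pointwise to $\cE$-valued functions vanishing at $0$. This is a \emph{valid} element precisely because degeneracy turns the compactness requirements into on-the-nose identities, which the vanishing-at-zero functions satisfy; it evaluates to $(\cE,F)$ at $t=1$ and to $(0,0)$ at $t=0$.

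The decisive step is the existence of inverses. I claim $-[(\cE,F)]=[(\cE^{\op},-F)]$, where $\cE^{\op}$ carries the reversed grading and the same $W$. To see that $(\cE,F)\oplus(\cE^{\op},-F)$ is homotopic to a degenerate module, I would write down the usual rotation homotopy on $\cE\oplus\cE^{\op}$ given by $G_t=\left(\begin{smallmatrix} F\cos(\frac{\pi}{2}t) & \sin(\frac{\pi}{2}t) \\ \sin(\frac{\pi}{2}t) & -F\cos(\frac{\pi}{2}t)\end{smallmatrix}\right)$, whose off-diagonal entries are the degree-$1$ real identity $\cE\to\cE^{\op}$. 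At $t=0$ this is $F\oplus(-F)$; at $t=1$ it is the swap, which is exactly degenerate (it squares to $1$, is self-adjoint, and commutes with the diagonal action of $A$). The crucial observation is that the swap commutes with $W\oplus W$, so the whole family lies in $\EER_\cG(A,B[0,1])$ and condition (iv) holds throughout; combined with the previous paragraph this yields the abelian group structure, commutativity being already recorded.

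For functoriality I would define the two actions by internal tensor products. A morphism ${}_B\cF_C$ in $\Cor_\cG$ induces $\cF_*\colon\KKR_\cG(A,B)\to\KKR_\cG(A,C)$ by $\cF_*[(\cE,F)]:=[(\cE\hat{\otimes}_B\cF,\,F\hat{\otimes}1)]$, the operator $F\hat{\otimes}1$ being adjointable because $F\in\cL_B(\cE)$ is $B$-linear, and its equivariance isomorphism being the composite of $W$ with the structure isomorphisms of $\cF$ produced in the proof of the composition Lemma. A morphism ${}_{A'}\cD_A$ induces $\cD^\ast\colon\KKR_\cG(A,B)\to\KKR_\cG(A',B)$ by $\cD^\ast[(\cE,F)]:=[(\cD\hat{\otimes}_A\cE,\widetilde F)]$, where $\widetilde F$ is any real degree-$1$ $F$-connection implementing equivariance; existence of such a connection and independence of the homotopy class from the choice (two connections differ by an operator that is compact after multiplication by $A$, giving an operator homotopy) are the equivariant versions of the standard theory of connections~\cite{Cuntz-Skandalis:Cones}. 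Both maps are group homomorphisms since $\hat{\otimes}$ distributes over $\oplus$, and the functor and bifunctor identities $(\cF'\circ\cF)_\ast=\cF'_\ast\circ\cF_\ast$, $(\cD\circ\cD')^\ast=(\cD')^\ast\circ\cD^\ast$, $\Id^\ast=\Id=\Id_\ast$, and $\cD^\ast\circ\cF_\ast=\cF_\ast\circ\cD^\ast$ all reduce to associativity of internal tensor products together with the Lemma that composites of equivariant correspondences are equivariant.

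The hard part, and the only place where the present setting differs from the classical one, is the preservation of condition~(iv): conditions (i)–(iii) and the group axioms are textbook, but one must verify that the operator $W$ survives every internal tensor product and connection, i.e.\ that $\widetilde F$ can always be chosen to satisfy the invariance-modulo-compacts condition and that the requisite commutators and products remain in $\cK$ after tensoring. This is exactly where the Fell-bundle structure of the generalised actions enters. My expectation is that, having already established in the composition Lemma that the \emph{underlying} correspondences are equivariant, the remaining task is to pick $\widetilde F$ compatibly with $W$ and to check that the commutators $[\theta_\xi,\,r^\ast\widetilde F\oplus W(s^\ast\widetilde F\hat{\otimes}\Id)W^\ast]$ are compact; this computation proceeds as in~\cite[\S4]{Kasparov:Operator_K} once $W$ is inserted, so no new analytic input beyond the equivariant connection calculus is required.
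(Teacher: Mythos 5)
Your overall plan --- run Kasparov's \S4 arguments while transporting the isomorphism $W$ through every construction --- is exactly the paper's proof, which consists of a citation of \cite[\S 4]{Kasparov:Operator_K} together with an explicit description of the inverse and the remark that degenerate cycles are null-homotopic; your monoid structure and cone homotopy for degenerate elements are fine. The proposal breaks down, however, precisely at the one point where the paper adds content beyond the citation: the inverse. You take $\cE^{\op}$ with reversed grading and \emph{the same} left action $\vp$, and assert that the swap $\Sigma=\bigl(\begin{smallmatrix}0&1\\1&0\end{smallmatrix}\bigr)$ commutes with the diagonal action of $A$, so that the endpoint of the rotation homotopy is degenerate. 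But in this theory $A$ is a \emph{graded} $\cstar$-algebra and all commutators are graded commutators (see the signs $(-1)^{|\xi|\cdot|F_2|}$ in the paper's definition of connections). For an \emph{odd} $a\in A$ the graded commutator of $\Sigma$ with $\vp(a)\oplus\vp(a)$ is $\Sigma(\vp(a)\oplus\vp(a))+(\vp(a)\oplus\vp(a))\Sigma=2\bigl(\begin{smallmatrix}0&\vp(a)\\ \vp(a)&0\end{smallmatrix}\bigr)$, which is neither $0$ nor compact in general; the same off-diagonal term $2\sin(\tfrac{\pi}{2}t)\,\vp(a)$ violates condition (iii) for every $G_t$ with $t>0$, so your family never lies in $\EER_\cG(A,B[0,1])$. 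This is exactly why the paper replaces $\vp$ by the twisted action $-\vp$ (up to unitary equivalence, $\vp\circ\epsilon_A$ with $\epsilon_A$ the grading automorphism): with $\vp\oplus(-\vp)$ the swap graded-commutes on the nose. The omission is not cosmetic: conjugating by the grading operator of $\cE$ shows your candidate $(\cE^{\op},\vp,-F)$ is unitarily equivalent to $(\cE^{\op},\vp\circ\epsilon_A,F)$, and since $\epsilon_A^\ast$ acts as $-1$ on groups of the type $\KKR(\Cl_{1,0}\hat{\otimes}\,\cdot\,,\cdot)$ (negating the odd Clifford generator negates an odd $K$-theory class), your formula represents $+x$ rather than $-x$ whenever $A$ is nontrivially graded and the group is not $2$-torsion. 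Trivially graded $A$ would save you, but it is not available here: the higher groups in this paper are defined with Clifford coefficients $A_{j,0}=A\hat{\otimes}\Cl_{j,0}$.

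There is a second, smaller gap in your functoriality step. Morphisms of $\Cor_\cG$ are arbitrary equivariant correspondences, and for ${}_B\cF_C$ whose left action $\psi:B\to\cL(\cF)$ does not land in $\cK(\cF)$, the formula $\cF_\ast[(\cE,F)]=[(\cE\hat{\otimes}_B\cF,F\hat{\otimes}1)]$ does not produce a Kasparov cycle: $\vp(a)(F^2-1)\in\cK(\cE)$ does not give $\bigl(\vp(a)(F^2-1)\bigr)\hat{\otimes}1\in\cK(\cE\hat{\otimes}_B\cF)$, because $\cK(\cE)\hat{\otimes}1\subseteq\cK(\cE\hat{\otimes}_\psi\cF)$ needs $\psi(B)\subseteq\cK(\cF)$ (take $A=B=C=\CC$, $\cE=\CC$, $F=0$, $\cF=\ell^2$: then $F\hat{\otimes}1=0$ on $\ell^2$ and $(F\hat{\otimes}1)^2-1$ is not compact). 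For a ${}^\ast$-homomorphism $B\to C$ this compactness is automatic, which is why Kasparov's argument applies there; similarly your connection-based contravariant map needs $\vp(A')\subseteq\cK(\cD)$ for the commutator and compactness conditions to follow from the connection calculus. So the induced homomorphisms must either be defined as Kasparov products against genuine cycles $(\cF,0)$ --- which exist only under such properness hypotheses --- or be restricted to the homomorphism case as in \cite[\S 4]{Kasparov:Operator_K}; as written, your mechanism fails on part of $\Cor_\cG$.
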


Note that the inverse of an element $x\in \KKR_\cG(A,B)$ is given by the class of $(-\cE,-F)$, where $(\cE,F)$ is a representative of $x$, $-\cE$ is the "real" graded $A,B$-correspondence given by $\cE$ with the opposite grading (\emph{i.e.} $(-\cE)^i=\cE^{1-i},i=0,1$) and the same "real" structure, and the non-degenerate homomorphism of "real" graded $\cstar$-algebras $-\vp:A\to \cL(-\cE)$ defined by \[-\vp(a):= \begin{pmatrix}0&\id_{\cE^1} \\ \id_{\cE^0} & 0\end{pmatrix}\vp(a), \ \forall a\in A.\]
Also, as in the usual case, degenerate elements are homotopically equivalent to $(0,0)$, so that they represent the zero element of $\KKR_\cG(A,B)$.

\begin{rem}
One recovers Kasparov's $\KKR(A,B)$ of~\cite{Kasparov:Operator_K} by taking the "real" groupoid $\cG$ to be the point. Indeed, in this case we may omit condition (iv) of Definition~\ref{df:Kas-corresp} since, thanks to~\eqref{eq:equiv-corresp}, for the automorphism $W:\cE\To \cE$ is indeed equal to the identity.
\end{rem}

Higher $\KKR_\cG$-groups are defined in an obvious way. Given a $\cstar$-algebra $A$ endowed with a generalised $\cG$-action $\sA\To \cG$, the u.s.c. Fell bundle 
\[\sA\hat{\otimes}\Cl_{p,q}:=\coprod_{g\in \cG}\sA_g\hat{\otimes}\Cl_{p,q}\]
over $\cG$ is a generalised $\cG$-action on the "real" graded $\cstar$-algebra \[A_{p,q}:=A\hat{\otimes}\Cl_{p,q}\cong C_0(X;\sA\hat{\otimes}\Cl_{p,q}).\] 

\begin{df}
Let $A, B$ be $\cstar$-algebras endowed with generalised $\cG$-actions. Then, the higher $\KKR_\cG$-groups $\KKR_{\cG,j}(A,B)$ are defined by 

\[ \KKR_{\cG,j}(A,B)=\KKR_\cG^{-j}(A,B):=\left\{\begin{array}{ll}\KKR_\cG(A_{j,0},B)\cong \KKR_\cG(A,B_{0,j}), & {\rm if\ } j\ge 0\\ \KKR_\cG(A_{-j,0},B)\cong \KKR_\cG(A,B_{0,-j}), & {\rm if\ } j\leq 0 \end{array}\right.\]
\end{df}

Let us outline the construction of the Kasparov product in groupoid-equivariant $\KKR$-theory for generalised actions. To do this, we need the following 

\begin{thm}(cf.~\cite[Theorem 6.9]{Tu-Xu-Laurent-Gengoux:Twisted_K}).
Let $A,D$ and $B$ be separable $\cstar$-algebras endowed with generalised $\cG$-actions. Let $(\cE_1,F_1)\in \EER_\cG(A,D), (\cE_2,F_2)\in \EER_\cG(D,B)$. Denote by $\cE$ the $\cG$-equivariant $A,B$-correspondence $\cE=\cE_1\hat{\otimes}_D\cE_2$. Then the set $F_1\hat{\#}F_2$ of "real" operators $F\in \cL(\cE)$ such that 
\begin{itemize}
\item $(\cE,F)\in \EER_\cG(A,B)$;
\item $F$ is a $F_2$-connection for $\cE_1$;
\item $\forall a\in A, a[F_1\hat{\otimes}_D\id, F]a^\ast\geq 0$ modulo $\cK(\cE)$
 	\end{itemize}
is non-empty. 	
\end{thm}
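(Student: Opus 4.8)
The plan is to adapt the classical Kasparov-product construction of Kasparov and Connes--Skandalis, in the groupoid-equivariant form already established for generalised actions in~\cite[Theorem~6.9]{Tu-Xu-Laurent-Gengoux:Twisted_K}, to the present "real" graded framework following~\cite{Kasparov:Operator_K}. Concretely, I would produce an $F\in\cL(\cE)$ of the interpolation form
\[ F \;=\; M^{1/2}\,(F_1\hat{\otimes}_D\id) \;+\; N^{1/2}\,G, \]
where $G\in\cL(\cE)$ is a suitable $F_2$-connection for $\cE_1$ and $M,N\in\cL(\cE)$ are positive with $M+N=\Id$, supplied by a "real" graded, $\cG$-equivariant Kasparov Technical Theorem. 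Here $F_1\hat{\otimes}_D\id$ is the "horizontal" degree-one operator induced on $\cE=\cE_1\hat{\otimes}_D\cE_2$ by $F_1\in\cL_D(\cE_1)$, and $G$ is the "vertical" part carrying $F_2$ along the fibres.

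First I would settle the existence of $F_2$-connections. Since $\cE_1$ is a countably generated "real" graded Hilbert $D$-module, Kasparov's stabilisation theorem realises it as a direct summand of the standard module; compressing the diagonal operator $\Id\hat{\otimes}F_2$ along this inclusion yields an $F_2$-connection on $\cE$ (cf.~\cite[Appendix~A]{Cuntz-Skandalis:Cones}). Averaging over the involution and truncating by functional calculus, I may take $G$ to be "real", self-adjoint, of degree $1$, with $\|G\|\le 1$; by Remark~\ref{rem:F-connection} its connection defects $[\theta_\xi,F_2\oplus G]$ all lie in $\cK$.

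The heart of the matter is the application of the Technical Theorem to $J=\cK(\cE)$ with the standard separable data: the defect algebras of $G$ and of $F_1\hat{\otimes}_D\id$ (the latter absorbing the "vertical" compacts $T_\xi T_\eta^\ast$), and a self-conjugate subspace $\Delta$ containing $A$, $G$, $F_1\hat{\otimes}_D\id$ and the graded commutator $[F_1\hat{\otimes}_D\id,G]$. The theorem returns positive $M,N$ with $M+N=\Id$ localising the two operators, with $[\Delta,M]\subseteq\cK$; because all the input data may be chosen invariant under the involution, $M$ and $N$ may be taken "real". Granting these, $M^{1/2}(F_1\hat{\otimes}_D\id)$ becomes a $0$-connection, so $F$ remains an $F_2$-connection; conditions (i)--(iii) of Definition~\ref{df:Kas-corresp} for $(\cE,F)$ follow from the standard, if intricate, cross-term bookkeeping of the Kasparov product, which I would not reproduce; and the positivity $a[F_1\hat{\otimes}_D\id,F]a^\ast\ge 0$ modulo $\cK$ follows from the positive dominant contribution $2\,a\,M^{1/2}(F_1\hat{\otimes}_D\id)^2M^{1/2}a^\ast$ (using $a((F_1\hat{\otimes}_D\id)^2-\Id)\in\cK$), the remaining $G$-contribution being controlled by the positivity clause built into the Technical Theorem.

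The main obstacle, and the only genuinely equivariant point, is condition (iv): the \emph{invariance modulo compacts} of $F$, namely that $W(s^\ast F\hat{\otimes}_{s^\ast B}\Id)W^\ast$ be an $r^\ast F$-connection for $\cF_b(i^\ast\sA)$ on $\cF_b(i^\ast\sA)\hat{\otimes}_{r^\ast A}r^\ast\cE$. Here the ordinary Technical Theorem must be upgraded to its $\cG$-equivariant version, exactly as in~\cite[Theorem~6.9]{Tu-Xu-Laurent-Gengoux:Twisted_K}: I would enlarge $\Delta$ to contain also the $W$-conjugated pullbacks of $G$, $F_1\hat{\otimes}_D\id$ and $A$, and enlarge the defect algebras so that the $r^\ast$-connection defect of $W(s^\ast G\hat{\otimes}_{s^\ast B}\Id)W^\ast$ against $r^\ast G$, which is compact precisely because $(\cE_1,F_1)$ and $(\cE_2,F_2)$ already satisfy (iv) and $\cE_1,\cE_2$ are equivariant correspondences, is absorbed into the compacts of the pulled-back module. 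The resulting $M,N$ then commute modulo $\cK$ with these $W$-conjugated operators, so that $F$ inherits (iv). Ensuring simultaneously that $M,N$ are "real" and equivariant-modulo-compacts is where separability of the $\cstar$-algebras and second countability of $\cG$ are used, guaranteeing the subspaces fed to the Technical Theorem remain separable. With $F$ so constructed all three asserted properties hold, whence $F_1\hat{\#}F_2\ne\varnothing$.
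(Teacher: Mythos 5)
Your proposal is correct in approach and matches what the paper itself relies on: the paper offers no proof of this theorem at all, merely citing \cite[Theorem 6.9]{Tu-Xu-Laurent-Gengoux:Twisted_K}, i.e.\ asserting that the classical connection-plus-Technical-Theorem construction of the Kasparov product carries over to the ``real'' graded, generalised-action setting. Your sketch --- an $F_2$-connection $G$ obtained by stabilisation and made ``real'' by averaging, a ``real'' $\cG$-equivariant Kasparov Technical Theorem supplying $M,N\geq 0$ with $M+N=\Id$, the interpolation $F=M^{1/2}(F_1\hat{\otimes}_D\id)+N^{1/2}G$, and the equivariant upgrade handling condition (iv) --- is exactly that standard construction, so it is consistent with, and considerably more detailed than, the paper's treatment.
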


From this theorem, the \emph{Kasparov product} 

\begin{equation}~\label{eq:KKR-coupling}
\hat{\otimes}_{\cG,D}:\KKR_\cG(A,D)\otimes_D \KKR_\cG(D,B) \To \KKR_\cG(A,B)
\end{equation}

of $[(\cE_1,F_1)]\in \KKR_\cG(A,D)$ and $[(\cE_2,F_2)]\in \KKR_\cG(D,B)$ is defined by 
\begin{equation}
[(\cE_1,F_1)]\hat{\otimes}_D[(\cE_2,F_2)]:=[(\cE,F)],
\end{equation}
where $\cE:=\cE_1\hat{\otimes}\cE_2$ and $F\in F_1\hat{\#}F_2$. It is not hard to see that as in the complex case where the $\cstar$-algebras are equipped with $\cG$-actions by automorphisms (see~\cite{LeGall:KK_groupoid}), this product is well-defined, bilinear, associative, homotopy-invariant, covariant with respect to $B$ and contravariant with respect to $A$.\\
More generally, we have 

\begin{thm}
Let $A_1, A_2, B_1, B_2$ and $D$ be separable $\cstar$-algebras endowed with generalised  $\cG$-actions. Then, the product~\eqref{eq:KKR-coupling} induces an associative product 
\[\KKR_{\cG,i}(A_1,B_1\hat{\otimes}_{C_0(X)}D)\otimes_D \KKR_{\cG,j}(D\hat{\otimes}_{C_0(X)}A_2,B_2)\To \KKR_{\cG,i+j}(A_1\hat{\otimes}_{C_0(X)}A_2,B_1\hat{\otimes}_{C_0(X)}B_2).\]	
\end{thm}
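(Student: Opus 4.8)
The plan is to build the general product out of the basic Kasparov product~\eqref{eq:KKR-coupling} together with the tensoring homomorphisms $\tau_D$, exactly as in the classical theory. Given classes $x\in \KKR_{\cG,i}(A_1,B_1\hat{\otimes}_{C_0(X)}D)$ and $y\in \KKR_{\cG,j}(D\hat{\otimes}_{C_0(X)}A_2,B_2)$, I would first apply $\tau_{A_2}$ to $x$, obtaining $\tau_{A_2}(x)\in \KKR_{\cG,i}(A_1\hat{\otimes}_{C_0(X)}A_2,\ B_1\hat{\otimes}_{C_0(X)}D\hat{\otimes}_{C_0(X)}A_2)$, and apply $\tau_{B_1}$ to $y$; after the canonical graded flip identifying $D\hat{\otimes}_{C_0(X)}A_2\hat{\otimes}_{C_0(X)}B_1$ with $B_1\hat{\otimes}_{C_0(X)}D\hat{\otimes}_{C_0(X)}A_2$ this yields $\tau_{B_1}(y)\in \KKR_{\cG,j}(B_1\hat{\otimes}_{C_0(X)}D\hat{\otimes}_{C_0(X)}A_2,\ B_1\hat{\otimes}_{C_0(X)}B_2)$. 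Here I use that $\hat{\otimes}_{C_0(X)}$ is associative and graded-commutative up to a canonical $\cG$-equivariant isomorphism of the underlying Fell bundles, so the flip is implemented by an isomorphism of equivariant correspondences and descends to $\KKR_\cG$. Writing $Q:=B_1\hat{\otimes}_{C_0(X)}D\hat{\otimes}_{C_0(X)}A_2$, the source of $\tau_{B_1}(y)$ now equals the target of $\tau_{A_2}(x)$, so the basic product over $Q$ is defined and I set
\[
x\cdot y:=\tau_{A_2}(x)\,\hat{\otimes}_{\cG,Q}\,\tau_{B_1}(y)\ \in\ \KKR_{\cG,i+j}(A_1\hat{\otimes}_{C_0(X)}A_2,\ B_1\hat{\otimes}_{C_0(X)}B_2).
\]

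Well-definedness is then inherited: both $\tau_D$ and the basic product are already defined on homotopy classes, so the recipe does not depend on the chosen Kasparov correspondences representing $x$ and $y$. The degree count is equally automatic once one checks that $\tau_D$ preserves the internal grading index. Concretely, using the defining isomorphisms $\KKR_{\cG,i}(A,B)\cong \KKR_\cG(A_{i,0},B)$ and the natural isomorphisms $(A\hat{\otimes}_{C_0(X)}D)_{i,0}\cong A_{i,0}\hat{\otimes}_{C_0(X)}D$ of generalised actions coming from $\Cl_{p,q}\hat{\otimes}\Cl_{p',q'}\cong \Cl_{p+p',q+q'}$, I would verify that $\tau_{A_2}$ preserves degree $i$ and $\tau_{B_1}$ preserves degree $j$; the basic product then adds degrees, producing $i+j$ as required.

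For \emph{associativity}, I would reduce everything to the already-established associativity of the basic product~\eqref{eq:KKR-coupling} via three compatibility lemmas for the functors $\tau_D$, each verified at the level of equivariant Kasparov correspondences: (a) $\tau_{D}\circ\tau_{D'}\cong \tau_{D'\hat{\otimes}_{C_0(X)}D}$, following from associativity of the balanced tensor product and the isomorphism $\sD\hat{\otimes}_\cG\sD'\cong\sD'\hat{\otimes}_\cG\sD$ of Fell bundles; (b) $\tau_D$ is multiplicative for the Kasparov product, i.e. $\tau_D(u\,\hat{\otimes}_{\cG,C}\,v)=\tau_D(u)\,\hat{\otimes}_{\cG,C\hat{\otimes}_{C_0(X)}D}\,\tau_D(v)$; and (c) the functors attached to distinct tensor slots commute through the graded flip. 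Granting (a)--(c), one expresses a triple product in two ways, matches the intermediate coefficient algebras by repeated use of these identities, and invokes associativity of the basic product to conclude.

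The main obstacle is lemma (b): the multiplicativity of $\tau_D$ for the Kasparov product. In contrast to the group or automorphism-action setting, one must track the equivariance isomorphism $W$ and re-verify condition (iv) of Definition~\ref{df:Kas-corresp} after tensoring with $\id_D$. The key points are that an $F_2$-connection for $\cE_1$ remains an $(F_2\hat{\otimes}\id_D)$-connection for $\cE_1\hat{\otimes}\id_D$, and that the positivity requirement defining $F_1\hat{\#}F_2$ is stable under $\hat{\otimes}\id_D$, because tensoring with the fixed module $\sD$ commutes with the formation of the operators $T_\xi$ and $\theta_\xi$ and with passage modulo $\cK$. This is technical but parallels the classical computation; the generalised-action framework only adds the bookkeeping of the Fell-bundle tensor $\hat{\otimes}_\cG$ and the compatibility of the associated $W$'s, which follows from uniqueness of connections up to the relevant homotopy equivalence.
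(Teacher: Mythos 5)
Your proposal is correct and follows essentially the same route as the paper: the paper's entire proof is a reference to Kasparov's Theorem 5.6 in~\cite{Kasparov:Operator_K}, and the construction you spell out --- forming $\tau_{A_2}(x)$ and $\tau_{B_1}(y)$, matching coefficients via the graded flip, taking the basic product over $B_1\hat{\otimes}_{C_0(X)}D\hat{\otimes}_{C_0(X)}A_2$, and reducing associativity to multiplicativity and commutation properties of the $\tau$'s --- is precisely that argument, transplanted to the generalised-action setting. Your identification of the multiplicativity of $\tau_D$ (with the re-verification of condition (iv) and the stability of connections under $\hat{\otimes}\,\id_D$) as the only genuinely new technical point is exactly where the adaptation lies, and your sketch of it is sound.
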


\begin{proof}
The proof is almost the same as that of~\cite[Theorem 5.6]{Kasparov:Operator_K}.
\end{proof}

Moreover, as in the usual case (cf.~\cite{Kasparov:Novikov}), there are \emph{descent morphisms} 
\begin{eqnarray*}
j_{\cG}: \KKR_\cG(A,B) & \To & \KKR(\cstar(\cG;\sA),\cstar_r(\cG;\sB));\\
j_{\cG,red}: \KKR_\cG(A,B) & \To & \KKR(\cstar_r(\cG,\sA),\cstar_r(\cG,\sB)),
\end{eqnarray*}
compatible with the Kasparov product.


\section{Functoriality in the Hilsum-Skandalis category}

Recall~\cite{Moutuou:Real.Cohomology} that a ("real") generalised morphism (or a Hilsum-Skandalis morphism) from a "real" groupoid $\Ga$ to the "real" groupoid $\cG$ consists of a "real" space $Z \ni z\mto \bar{z}\in Z$, two continuous maps $\xymatrix{Y & Z\ar[l]_\fr \ar[r]^\fs & X}$ which are equivariant with respect to the "real" structures, a continuous left (resp. right) action of $\Ga$ (resp. of $\cG$) on $Z$ respecting the involutions (\emph{i.e.} $\overline{\g\cdot z\cdot g}=\tau_\Ga(g)\cdot \bar{z}\cdot \tau_\cG(g)$ where the product makes sense), making $Z$ a generalised morphism in the usual sense (see for instance~\cite{Hilsum-Skandalis:Morphismes, Tu-Xu-Laurent-Gengoux:Twisted_K}). There is a category $\RG$ whose objects are "real" groupoids and whose morphisms are (Morita equivalence classes of) "real" generalised morphisms. This category was shown in~\cite[Proposition 1.37]{Moutuou:Real.Cohomology} to be isomorphic to a category $\RG_\Omega$, which is much more exploitable. The category $\RG_\Om$ has same objects as $\RG$. If $\Ga, \cG$ are "real" groupoids, $\Hom_{\RG_\Om}(\Ga,\cG)$ consists of Morita equivalence classes of compositions of the form 
 \[
 \xymatrix{\Ga & \Ga[\cU] \ar@{_{(}->}[l]_\iota \ar[r]^f & \cG}
 \]
 where $\cU=(U_i)_{i\in I}$ is a "real" open cover of $Y$ (that is, $I$ has involution $i\mto \bar{i}$ satisfying $U_{\bar{i}}=\tau(U_i), \forall i\in I$), and $f$ is a strict "real" morphism; \emph{i.e.} equivariant with respect to the involution $(i,\g,j)\mto (\bar{i},\tau_\Ga(\g),\bar{j})$ on the cover groupoid $\Ga[\cU]$ (induced from the those of $\Ga$ and $I$) and that of $\cG$ Such a morphism will be represented by the couple $(\cU,f)$. \\

In this section we show that $\KKR_{(\cdot)}$ is functorial in the category of locally compact second-countable "real" groupoids and generalised "real" morphisms. We first need to show that it is functorial with respect to strict "real" morphisms.

Let $f:\Ga\To \cG$ be a strict morphism of "real" groupoids and $A$ a $\cstar$-algebra endowed with the generalised "real" $\cG$-action $\sA\To \cG$. Then the pull-back $f^\ast\sA\To \Ga$ defines a generalised "real" $\Ga$-action on the $\cstar$-algebra $f^\ast A=C_0(Y;f^\ast\sA)$. Let $B$ be another $\cstar$-algebra together with a generalised "real" $\cG$-action $\sB$. Suppose ${}_A\cE_B$ is a $\cstar$-correspondence. Then under the identifications 

\begin{equation}~\label{eq:KKR_identifications1}
f^\ast A=A\hat{\otimes}_{C_0(X)}C_0(Y), f^\ast B=B\hat{\otimes}_{C_0(X)}C_0(Y), \ {\rm and\ } f^\ast \cE=\cE\hat{\otimes}_{C_0(X)}C_0(Y),
\end{equation}

we see that $f^\ast \cE$ is a "real" graded $f^\ast A,f^\ast B$-correspondence. Further, assume that ${}_A\cE_B$ is $\cG$-equivariant with respect to the isomorphism $W:s_\cG^\ast\cE\hat{\otimes}_{s_\cG^\ast B}\cF(i_\cG^\ast\sB)\stackrel{\cong}{\To} \cF_b(i_\cG^\ast\sA)\hat{\otimes}_{r_\cG^\ast A}r_\cG^\ast\cE$. Then, by using the canonical identifications 
\begin{equation}
\begin{array}{ll} 
i_\Ga^\ast(f^\ast\sA)=f^\ast(i^\ast_\cG\sA), & i_\Ga^\ast(f^\ast\sB)=f^\ast(i_\cG^\ast\sB),\\
s^\ast_\Ga(f^\ast A)=s_\cG^\ast A\hat{\otimes}_{C_0(\cG)}C_0(\Ga), & r_\Ga^\ast(f^\ast A)= r_\cG^\ast A\hat{\otimes}_{C_0(\cG)}C_0(\Ga)\\
s^\ast_\Ga(f^\ast\cE) = r_\cG^\ast\cE\hat{\otimes}_{C_0(\cG)}C_0(\Ga), & r^\ast_\Ga (f^\ast\cE)= r_\cG^\ast\cE\hat{\otimes}_{C_0(\cG)}C_0(\Ga),	
\end{array}
\end{equation}
where the "real" action of $C_0(\cG)$ on $C_0(\Ga)$ is induced by $f$ in an obvious way, we get an isomorphism $$f^\ast W:s^\ast_\Ga(f^\ast\cE)\hat{\otimes}_{s_\Ga^\ast f^\ast B}\cF_b(i_\Ga^\ast f^\ast\sB)\To \cF_b(i_\Ga^\ast f^\ast \cA)\hat{\otimes}_{r_\Ga^\ast f^\ast A}r_\Ga^\ast f^\ast\cE,$$
making $f^\ast\cE$ into a $\cG$-equivariant $f^\ast A,f^\ast B$-correspondence. Hence equivariant $\KKR$-theory has a functorial property in the category $\RG_s$.  

\begin{dflem}
Let $f:\Ga\To \cG$ be a strict morphism of "real" groupoids. Let $A$ and $B$ be $\cstar$-algebras endowed with generalised "real" $\cG$-actions. Then we define a group homomorphism

\[
f^\ast: \KKR_\cG(A,B)\To \KKR_\Ga(f^\ast A,f^\ast B)
\]
by assigning to a $\cG$-equivariant Kasparov $A,B$-correspondence $(\cE,F)\in \EER_\cG(A,B)$ the pair 
\[f^\ast(\cE,F):=(f^\ast \cE,f^\ast F)\in \EER_\Ga(f^\ast A,f^\ast B),\]
where under the identifications~\eqref{eq:KKR_identifications1}, we put $f^\ast F=F\hat{\otimes}_{C_0(X)}\Id_{C_0(Y)}$. \\
Moreover, the map $f^\ast$ is natural with respect to the Kasparov product~\eqref{eq:KKR-coupling} in the sense that if $D$ is another $\cstar$-algebra equipped with a generalized "real" $\cG$-action, then
\[f^\ast(x)\hat{\otimes}_{f^\ast D}f^\ast(y)=f^\ast(x\hat{\otimes}_Dy), \ \forall x\in \KKR_\cG(A,D), y\in \KKR_\cG(D,B).\] 
\end{dflem}

\begin{proof}
The proof is the same as that of~\cite[Proposition 7.2]{LeGall:KK_groupoid}.	
\end{proof}

More generally, in order to establish functoriality in the category $\RG$ we need the following result.

\begin{pro}~\label{pro:pushforward-Fell-bdle}
Let $\grpd$ be a locally compact second-countable "real" groupoid, and let $\cU=(U_j)_{j\in J}$ a "real" open cover of $X$. Denote by $\iota:\cG[\cU]\To \cG$ the canonical inclusion. For all Fell bundle (resp. u.s.c. Fell bundle) $\sA\To \cG[\cU]$, there exist a  Fell bundle (resp. u.s.c. Fell bundle) $\iota_{!}\sA\To \cG$ and an isomorphism of Fell bundles (resp. of u.s.c. Fell bundles) $$\iota^\ast(\iota_!\sA)\stackrel{\cong}{\To}\sA$$ over $\xymatrix{\cG[\cU]\dar[r] & \coprod_{j\in J}U_j}$.
\end{pro}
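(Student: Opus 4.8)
The plan is to exhibit $\iota_!\sA$ as a full corner of an \emph{amplified} Fell bundle over $\cG$ obtained by gluing the fibres of $\sA$ along the cover, and to read the required isomorphism $\iota^*(\iota_!\sA)\cong\sA$ off the descent datum carried by $\sA$. Conceptually $\iota$ is an essential equivalence, so $\iota^*$ ought to be an equivalence of the categories of (u.s.c.) Fell bundles and $\iota_!$ its quasi-inverse; the proposition is the essential-surjectivity part. Write $J_x:=\{j\in J: x\in U_j\}$, so that $\iota^{-1}(g)=\{(i,g,j): i\in J_{r(g)},\ j\in J_{s(g)}\}$, and recall that each unit fibre $\sA_{(i,x,i')}$ is a graded imprimitivity $\sA_{(i,x,i)}$--$\sA_{(i',x,i')}$-bimodule. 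Together with the Fell product these yield canonical identifications $\sA_{(i,r(g),i')}\hat{\otimes}\sA_{(i',g,j')}\hat{\otimes}\sA_{(j',s(g),j)}\cong\sA_{(i,g,j)}$, which constitute a descent datum for $\sA$ relative to the open surjection $p:\coprod_j U_j\To X$.

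First I would form the amplified bundle $\widehat{\sA}\To\cG$ whose fibre over $g$ is the completion of the space of ``matrices'' $(a_{ij})_{i\in J_{r(g)},\,j\in J_{s(g)}}$ with $a_{ij}\in\sA_{(i,g,j)}$, equipped with matrix multiplication (using $\sA_{(i,g,j)}\times\sA_{(j,h,k)}\To\sA_{(i,gh,k)}$), transpose--conjugate involution, and the fibrewise grading and ``real'' structure inherited from $\sA$. Over a unit $x$ this fibre is exactly the linking $\cstar$-algebra of the restriction of $\sA$ to the pair groupoid on $J_x$, and each $\widehat{\sA}_g$ is an imprimitivity $\widehat{\sA}_{r(g)}$--$\widehat{\sA}_{s(g)}$-bimodule; verifying that $\widehat{\sA}$ satisfies the Fell bundle axioms, with norm continuous (resp. upper semicontinuous) according to the hypothesis on $\sA$, is routine bookkeeping.

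Next I would choose a partition of unity $(\chi_j)_{j\in J}$ subordinate to $\cU$ that is compatible with the involution, i.e. $\chi_{\bar{j}}\circ\tau=\chi_j$ and $\sum_j\chi_j=1$; this exists because $\cU$ is a ``real'' open cover and $X$ is paracompact. The continuous unit vectors $v_x:=(\chi_i(x)^{1/2})_{i\in J_x}$ then cut out a full hereditary sub-bundle (a ``rank-one corner'') $\iota_!\sA\subseteq\widehat{\sA}$, defined fibrewise as the closed span of the products $\chi_i(r(g))^{1/2}\,a\,\chi_j(s(g))^{1/2}$ with $a\in\sA_{(i,g,j)}$. One checks that over each unit $x$ this corner is a genuine $\cstar$-algebra, that $\iota_!\sA$ inherits the Fell, graded and ``real'' structures (the last thanks to the involution-compatibility of $(\chi_j)$), and that it is again a (u.s.c.) Fell bundle over $\cG$ with continuous (resp. u.s.c.) norm. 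The isomorphism $\iota^*(\iota_!\sA)\xrightarrow{\cong}\sA$ over $\cG[\cU]$ is then obtained fibrewise: over $(i,g,j)$ one identifies $(\iota_!\sA)_g$ with $\sA_{(i,g,j)}$ by reading off the $(i,j)$-component, normalised through the descent identifications, and one verifies this is an isometric isomorphism of graded ``real'' bimodules commuting with the Fell structure, using $\sum_j\chi_j=1$ for fullness.

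I expect the main obstacle to be the corner construction at the level of \emph{non-unital} Fell bundles: in the absence of matrix units one must build the cutting projections inside the multiplier bundle from approximate identities and then prove that the resulting field of corners varies continuously (resp. upper semicontinuously) and is independent, up to canonical isomorphism, of the chosen $(\chi_j)$ --- equivalently, that $\widehat{\sA}$ is Morita equivalent to $\iota_!\sA$ through this full sub-bundle. Once independence of the partition of unity is established, compatibility with the grading and the ``real'' structure, as well as the naturality needed for the functoriality statements of Section~4, follow formally.
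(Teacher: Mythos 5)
Your amplified bundle $\widehat{\sA}$ is not an auxiliary object here: it is, verbatim, the paper's construction of $\iota_!\sA$. The paper's proof sets $(\iota_!\sA)_g:=\bigoplus_{(j_0,j_1)\in I_g}\sA_{g_{j_0j_1}}$, where $I_g$ is the set of pairs $(j_0,j_1)$ with $g\in\cG^{U_{j_0}}_{U_{j_1}}$, equips it with the matrix-type product generated by $a_{j_0j_1}\hat{\otimes}b_{j_1j_2}\mapsto(ab)_{j_0j_2}$, and stops there, asserting that this bundle satisfies the desired isomorphism. So the first half of your proposal reproduces the paper's entire proof, and everything beyond it is your own addition. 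It is exactly that addition --- cutting $\widehat{\sA}$ down to a corner by a partition of unity so that the pullback becomes fibrewise isomorphic to $\sA$ --- that fails, and it fails for a reason different from the non-unitality issue you flagged. For a field of projections with entries $p_{ik}=\chi_i^{1/2}\chi_k^{1/2}e_{ik}$ to satisfy $p^2=p$ (using $\sum_k\chi_k=1$) you need multiplier ``matrix units'' with $e_{ik}e_{kl}=e_{il}$, $e_{ik}^\ast=e_{ki}$, $e_{ii}=1$; taking $l=i$ shows each $e_{ik}$ must be a unitary multiplier of the bimodule $\sA_{(i,x,k)}$, so that the family $(e_{ik})$ is a coherent trivialisation of the restriction of $\sA$ to the \v{C}ech groupoid of $\cU$. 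Such a trivialisation is obstructed cohomologically, not by lack of units. Concretely, take $\cG=X$ a space with trivial involution and let $\sA$ be the "real" Fell line bundle over $\cG[\cU]=X[\cU]$ with all fibres $\CC$ and product twisted by a $\ZZ_2$-valued \v{C}ech $2$-cocycle $\lambda$ whose class in $\check{H}^2(X,\ZZ_2)$ is non-zero (e.g. $X=\RR P^2$ with a good cover): your projection $p$ exists if and only if $\lambda$ is a coboundary. No choice of partition of unity and no approximate-identity argument can remove this obstruction; it is precisely the Dixmier--Douady-type twisting that the paper's Brauer-group machinery is built to detect.

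The same example shows that the goal you set yourself --- a literal, fibrewise isometric isomorphism $\iota^\ast(\iota_!\sA)\cong\sA$ --- cannot be achieved by any construction at all. If $\cB\To X$ were a Fell bundle over $\cG=X$ with $\iota^\ast\cB\cong\sA$, the unit fibres would force $\cB_x\cong\CC$, and the arrow fibres would then yield continuous functions $\mu_{ik}:U_i\cap U_k\To\uc$ (equivariant, hence $\pm1$-valued) with $\lambda_{ikl}=\mu_{il}\,\mu_{ik}^{-1}\mu_{kl}^{-1}$, contradicting $[\lambda]\neq 0$. What is true, and what the bare amplification already provides, is weaker: $\sA$ embeds canonically into $\iota^\ast(\iota_!\sA)$ by placing $a_{j_0j_1}$ in the $(j_0,j_1)$-summand, this embedding respects all products, involutions and gradings, and it exhibits $\sA$ as a full corner, i.e. a Morita equivalence of Fell bundles; that equivalence is all that is actually needed for the push-forward of correspondences and for Theorem~\ref{thm:functor-iota_!-in-KKR_G}. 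So your instinct that the amplification does not literally restrict back to $\sA$ is sound --- this is a genuine gap in the paper's ``one easily verifies'' --- but the repair cannot be a corner construction; rather, the conclusion of the Proposition has to be read as a canonical equivalence (Morita) of Fell bundles instead of an isomorphism.
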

 
\begin{proof}
Without loss of generality, we may suppose $\cU$ is locally finite since $X$ is paracompact. We use the following notations: as usual, we write $g_{j_0j_1}$ for $(j_0,g,j_1)\in \cG[\cU]$, and $x_j$ for $(j,x)\in \coprod_{j\in J}U_j$; let $\pi:\sA\To \cG[\cU]$ be the projection of the given "real" graded Fell bundle (resp. u.s.c. Fell bundle); an element $a\in \pi^{-1}(g_{j_0j_1})$ will be written $a_{j_0j_1}$. 

For $x\in X$ we denote by $I_x$ the finite subset of $j\in J$ such that $x\in U_j$, and for $g\in \cG$ let $I_g$ the finite subset of pairs $(j_0,j_1)\in J\times J$ such that $g\in \cG^{U_{j_0}}_{U_{j_1}}$. Put
\[\iota_!\sA_g:= \bigoplus_{(j_0,j_1)\in I_g}\sA_{g_{j_0j_1}}, \quad {\rm and\ } \widetilde{\iota_!\sA}:= \coprod_{g\in \cG}\iota_!\sA.\]

Then $\widetilde{\iota_!\sA}\To \cG$ is a "real" graded Banach bundle (resp. u.s.c. Banach bundle), with the projection $\iota_!\pi$ defined by 
\[\iota_!\pi((g,a_{j_0j_1})_{(j_0,j_1)\in I_g}):=g.\]  

Moreover, for $(g_1,g_2)\in \cG^{(2)}$, the pairing 
\begin{equation*}
\begin{array}{lll}
	I_{g_1}\times_{I_{s(g_1)}}I_{g_2} & \To & I_{g_1g_2}\\
	((j_0,j_1), (j_1,j_2) )& \mto & (j_0,j_2)
	\end{array}	
	\end{equation*}
where $I_{g_1}\times_{I_{s(g_1)}}I_{g_2}:=\{((j_0,j_1),(j_1,j_2))\in I_{g_1}\times I_{g_2}\}$, enables us to define a multiplication on $\widetilde{\iota_!\sA}$ 
\[\iota_!\sA_{g_1}\hat{\otimes}_{\iota_!\sA_{s(g_1)}}\iota_!\sA_{g_2} \To \iota_!\sA_{g_1g_2}\]
generated by $a_{j_0j_1}\hat{\otimes}b_{j_1j_2}\mto (ab)_{j_0j_2}$. One easily verifies that together with this multiplication, $\iota_!\sA \To \cG$ is a "real" graded Fell bundle (resp. u.s.c Fell bundle) that satisfies tthe desired isomorphism.	
\end{proof}

\begin{df}
Let $\grpd$ and $\cU$ be as above. Given a $\cstar$-algebra $A$ with a generalised "real" $\cG[\cU]$-action $\sA$, we denote by $\iota_!A$ the $\cstar$-algebra $C_0(X;\iota_!\sA)$ endowed with the generalised "real" $\cG$-action $\iota_!\sA\To \cG$. 	
\end{df}

\begin{pro}
Let $A,B$ be $\cstar$-algebras endowed with generalised "real" $\cG[\cU]$-actions $\sA$ and $\sB$, respectively. Assume $\cE$ is a $\cG[\cU]$-equivariant  $A,B$-correspondence. Then there exists a $\iota_!A,\iota_!B$-correspondence $\iota_!\cE$ and an isomorphism of $A,B$-correspondences $\iota^\ast \iota_!\cE\cong \cE$.	
\end{pro}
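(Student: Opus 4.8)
The plan is to mimic the pushforward construction for Fell bundles carried out in Proposition~\ref{pro:pushforward-Fell-bdle}, first promoting the equivariant correspondence $\cE$ to a \emph{correspondence bundle} over $\cG[\cU]$ and then pushing its fibres forward along $\iota$. As in that proof I may assume $\cU=(U_j)_{j\in J}$ is locally finite, and I keep the notation $g_{j_0j_1}=(j_0,g,j_1)\in\cG[\cU]$, $x_j=(j,x)$, with $I_x=\{j:x\in U_j\}$ and $I_g=\{(j_0,j_1):r(g)\in U_{j_0},\,s(g)\in U_{j_1}\}$. The first step is to encode $\cE$ fibrewise over the arrows of $\cG[\cU]$: for $\g\in\cG[\cU]$ I set $\cE_\g:=\sA_\g\hat{\otimes}_{\sA_{s(\g)}}\cE_{s(\g)}$, where the fibres $\cE_{s(\g)}$ come from the $C_0(X_\cU)$-module structure of $\cE$ over the unit space $X_\cU=\coprod_jU_j$. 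The equivariance isomorphism $W$ of Definition~\ref{df:Kas-corresp} (together with diagram~\eqref{eq:equiv-corresp}) supplies a canonical identification $\cE_\g\cong\cE_{r(\g)}\hat{\otimes}_{\sB_{r(\g)}}\sB_\g$, so that $\cE_\g$ becomes an $\sA_{r(\g)},\sB_{s(\g)}$-correspondence, and over a unit $\cE_{x_j}\cong\cE_{(j,x)}$.

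Then, in exact parallel with Proposition~\ref{pro:pushforward-Fell-bdle}, I would define the pushed-forward field over $X$ by
\[
\iota_!\cE_x:=\bigoplus_{(j_0,j_1)\in I_x}\cE_{x_{j_0j_1}},\qquad \widetilde{\iota_!\cE}:=\coprod_{x\in X}\iota_!\cE_x,
\]
and set $\iota_!\cE:=C_0(X;\widetilde{\iota_!\cE})$. The right $\iota_!B$-module structure and the $\iota_!B$-valued inner product are built componentwise from the Fell multiplication of $\sB$: using $\cE_{x_{j_0j_1}}\cong\cE_{(j_0,x)}\hat{\otimes}_{\sB_{(j_0,x)}}\sB_{x_{j_0j_1}}$ and $\sB_{x_{j_0j_1}}\hat{\otimes}\sB_{x_{j_1j_2}}\to\sB_{x_{j_0j_2}}$, one puts $(\xi\cdot b)_{j_0j_2}=\sum_{j_1}\xi_{j_0j_1}\cdot b_{j_1j_2}$ and $\langle\xi,\eta\rangle_{k_0k_1}=\sum_{j}\langle\xi_{jk_0},\eta_{jk_1}\rangle$. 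Dually, the left action $\vp_!:\iota_!A\To\cL(\iota_!\cE)$ is defined through $\cE_{x_{j_0j_1}}\cong\sA_{x_{j_0j_1}}\hat{\otimes}_{\sA_{(j_1,x)}}\cE_{(j_1,x)}$ and the multiplication of $\sA$, exactly as the algebra multiplication was generated by $a_{j_0j_1}\hat{\otimes}b_{j_1j_2}\mapsto(ab)_{j_0j_2}$ in Proposition~\ref{pro:pushforward-Fell-bdle}.

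The Hilbert-module and correspondence axioms are then routine once one knows these operations are well defined and mutually compatible, and this is precisely where the cocycle condition encoded by the commutativity of diagram~\eqref{eq:equiv-corresp} is indispensable: it guarantees that the two identifications of $\cE_\g$ (via $\sA_\g$ and via $\sB_\g$) are interchanged consistently, so that $\vp_!$ lands in $\cL(\iota_!\cE)$ and commutes with the right $\iota_!B$-action while the inner product stays $\iota_!B$-associative. The same mechanism, combined with the pairing $I_{g_1}\times_{I_{s(g_1)}}I_{g_2}\to I_{g_1g_2}$ of Proposition~\ref{pro:pushforward-Fell-bdle}, upgrades $W$ to an equivariance isomorphism $\iota_!W$, so that $\iota_!\cE$ is in fact a $\cG$-equivariant $\iota_!A,\iota_!B$-correspondence. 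I expect the main obstacle to be twofold and of the same nature as in Proposition~\ref{pro:pushforward-Fell-bdle}: establishing upper-semicontinuity of the field $\widetilde{\iota_!\cE}$ (so that $C_0(X;\widetilde{\iota_!\cE})$ is a genuine correspondence), and carrying out the cocycle-driven bookkeeping that shows the left and right structures cohere.

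Finally, the isomorphism $\iota^*\iota_!\cE\cong\cE$ is obtained by applying $\iota^*$ and invoking the Fell-bundle isomorphisms $\iota^*\iota_!\sA\cong\sA$ and $\iota^*\iota_!\sB\cong\sB$ of Proposition~\ref{pro:pushforward-Fell-bdle}. Over the unit $x_j$ the relevant fibre of $\iota^*\iota_!\cE$ is the diagonal component $\cE_{x_{jj}}\cong\cE_{(j,x)}$, and the identifications defining the $\iota_!A$- and $\iota_!B$-actions restrict, under these two Fell-bundle isomorphisms, to the original left and right actions on $\cE$; patching these fibrewise identifications yields a \emph{real} degree-$0$ unitary intertwining the left homomorphisms, that is, an isomorphism of $A,B$-correspondences $\iota^*\iota_!\cE\cong\cE$.
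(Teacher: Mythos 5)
Your construction lands on the right object, but it follows a genuinely different route from the paper's proof. The paper does not assemble $\iota_!\cE$ fibre by fibre: it introduces the ${}^\ast$-homomorphism $\iota_!:B\To\iota_!B$, $\iota_!\phi(x):=(\phi((j,x)))_{j\in I_x}$, and defines $\iota_!\cE$ as the push-out of the Hilbert $B$-module $\cE$ along this map in the sense of Jensen--Thomsen: quotient $\cE$ by $N_{\iota_!}=\{\xi\in\cE\mid\iota_!(\langle\xi,\xi\rangle_B)=0\}$ and complete for the $\iota_!B$-valued pre-inner product $\iota_!(\langle\cdot,\cdot\rangle)$. The operators $\iota_!T$, and in particular the left action $\iota_!\vp$ of $\iota_!A$, are then transported through this universal construction, the equivariance operator $\iota_!W$ is produced from $W$ via identifications such as $s^\ast(\iota_!\cE)=\iota_!(s^\ast\cE)$, and the fibrewise picture appears only as an afterthought ($\iota_!\cE\cong C_0(X;\widetilde{\iota_!\sE})$). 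What the paper's route buys is that completeness, well-definedness and the Hilbert-module axioms come for free from a standard construction; moreover, the upper-semicontinuity problem you flag is discharged there by first writing $\cE$ as the section algebra of a u.s.c.\ Banach field over $\coprod_jU_j$ (citing the appendix of Tu--Xu--Laurent-Gengoux) and then applying Proposition~\ref{pro:pushforward-Fell-bdle} to that underlying field -- a trick you could borrow verbatim instead of proving upper-semicontinuity by hand. What your route buys is that the right $\iota_!B$-module structure is manifest: your pair-indexed fibres $\bigoplus_{(j_0,j_1)}\cE_{x_{j_0j_1}}$ are exactly what matches the fibres $\bigoplus_{(j_0,j_1)}\sB_{x_{j_0j_1}}$ of $\iota_!\sB$ over units, on which $\iota_!B$ acts by the matrix-type multiplication of Proposition~\ref{pro:pushforward-Fell-bdle}, whereas the paper's homomorphism $\iota_!$ reaches only the ``diagonal'' part of $\iota_!B$ and its displayed fibres $\bigoplus_{j\in I_x}\cE\hat{\otimes}_BB_{(j,x)}$ carry a single index; your bookkeeping, though heavier, is the more faithful concrete model of the push-out.

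Two cautions. First, your closing step is asserted rather than proved: with the paper's definition of pullback along a strict morphism, the fibre of $\iota^\ast\iota_!\cE$ at a unit $(j,x)$ is the whole of $(\iota_!\cE)_x$, not its $(j,j)$-component, so ``restricting to the diagonal'' does not by itself define a degree-$0$ unitary in $\cL(\iota^\ast\iota_!\cE,\cE)$; the identification has to be implemented by the same mechanism as the isomorphism $\iota^\ast(\iota_!\sA)\cong\sA$ of Proposition~\ref{pro:pushforward-Fell-bdle} (the paper is equally terse here, calling this ``an immediate consequence of the construction''). Second, a notational slip: the index set of your fibre at $x$ should be $I_x\times I_x$ (equivalently, the set $I_g$ of pairs for the unit arrow $g=x$), not $I_x$ itself.
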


\begin{proof}
The map $\iota_!:B\To \iota_!B$ sending $\phi\in C_0(\coprod_jU_j;\sB)$ to the function $\iota_!\phi \in C_0(X;\iota_!\sB)$ given by $$\iota_!\phi(x):=(\phi((j,x)))_{j\in I_x},$$
is a surjective "real" graded ${}^\ast$-homomorphism. We then can define the push-out $\iota_!\cE$ of the Hilbert $B$-module $\cE$ via $\iota_!$. Let us recall~\cite[\S1.2.2.]{Jensen-Thomsen:Elements_KK} the definition of the "real" graded Hilbert $\iota_!B$-module $\iota_!\cE$. Let $N_{\iota_!}=\left\{\xi\in \cE \mid \iota_!(\<\xi,\xi\>_B)=0\right\}$; denote by $\dot{\xi}$ the image of $\xi\in \cE$ in the quotient space $\cE/_{N_{\iota_!}}$, the latter being a "real" graded $\iota_!B$-module by setting 
\[\dot{\xi}\cdot\iota_!(b):=\overset{.}{\wideparen{\xi b}}, \quad \dot{\xi}\in \cE/_{N_{\iota_!}}, \iota_!(b)\in \iota_!B .\] 
Then define the Hilbert $\iota_!B$-module $\iota_!\cE$ as the completion of $\cE/_{N_{\iota_!}}$ with respect to the "real" graded $\iota_!B$-valued pre-inner product  
\[\<\dot{\xi},\dot{\eta}\>_{\iota_!B}:=\iota_!(\<\xi,\eta\>), \quad \xi,\eta\in \cE.\] 

For $T\in \cL(\cE)$, let $\iota_!T$ be the unique operator in $\cL(\iota_!\cE)$ making the following diagram commute
\[\xymatrix{\cE \ar[r] \ar[d]^T & \iota_!\cE \ar[d]^{\iota_!T} \\ \cE \ar[r] & \iota_!\cE}\]
where the horizontal arrows are the quotient map; \emph{i.e.}, 
\begin{equation}~\label{eq:def-iota_!T}
\iota_!T(\dot{\xi})=\overset{.}{\wideparen{T(\xi)}}, \quad \dot{\xi}\in \iota_!\cE.
\end{equation} 
Hence, the map $\vp:A\To \cL(\cE)$ implementing the $A,B$-correspondence gives rise to a non-degenerate ${}^\ast$-homomorphism $\iota_!\vp: \iota_!A\To \cL(\iota_!\cE)$ such that 
\begin{equation}~\label{eq:def-iota_!phi}
(\iota_!\vp)(\iota_!(a)):=\iota_!(\vp(a)), \quad \forall a\in A. 
\end{equation}
Therefore, $\iota_!\cE$ is a $\iota_!A,\iota_!B$-correspondence. It is not hard to check that $\iota_!\cE$ is isomorphic to $C_0(X;\widetilde{\iota_!\sE})$, where $\widetilde{\iota_!\sE}\To X$ is the unique u.s.c. field of Banach algebras with fibre $(\iota_!\sE)_x=\bigoplus_{j\in I_x}\cE\hat{\otimes}_BB_{(j,x)}$; indeed, since $\cE$ is a "real" graded Hilbert $B$-module, thanks to~\cite[Appendix A]{Tu-Xu-Laurent-Gengoux:Twisted_K}) there is a unique topology on the u.s.c. field $\tilde{\sE}=\coprod_{(j,x)} \cE\hat{\otimes}_BB_{(j,x)}$ such that $\cE\cong C_0(\coprod_jU_j;\tilde{\sE})$ the $\ZZ_2$-grading and the "real" structure on $\tilde{\sE}$ is the obvious one; so, by Proposition~\ref{pro:pushforward-Fell-bdle}, we get the "real" graded u.s.c. field $\widetilde{\iota_\sE}$. 

Let $W:s^\ast\cE\hat{\otimes}_{s^\ast B}\cF_b(i^\ast\sB)\To \cF_b(i^\ast\sA)\hat{\otimes}_{r^\ast A}r^\ast\cE$ be the isomorphism of $\cstar$-correspondences implementing the $\cG[\cU]$-equivariance. Then from the identifications 
\begin{eqnarray*}
 		s^\ast (\iota_!A)=\iota_!(s^\ast A), &  r^\ast(\iota_!A)=\iota_!(r^\ast A), \\
 		s^\ast(\iota_!B)=\iota_!(s^\ast B), & r^\ast(\iota_!B)=\iota_!(r^\ast B), \\
 		i^\ast(\iota_!\sA) = \iota_!(i^\ast\sA), & i^\ast(\iota_!\sB)=\iota_!(i^\ast\sB),\\
 		s^\ast(\iota_!\cE)=\iota_!(s^\ast\cE), & r^\ast(\iota_!\cE)=\iota_!(r^\ast\cE),  		
 	\end{eqnarray*}
 	
we get 
\begin{eqnarray*}
s^\ast(\iota_!\cE)\hat{\otimes}_{s^\ast(\iota_!B)}\cF_b(i^\ast(\iota_!\sB)) & \cong & \iota_!\left(s^\ast\cE\hat{\otimes}_{s^\ast B}\cF_b(i^\ast\sB)\right), \ {\rm and\ } \\
\cF_b(i^\ast(\iota_!\sA))\hat{\otimes}_{r^\ast(\iota_!A)}r^\ast(\iota_!\cE) & \cong & \iota_!\left(\cF_b(i^\ast\sA)\hat{\otimes}_{r^\ast A}r^\ast\cE\right).
\end{eqnarray*} 
Thus, $W$ induces an isomorphism of $s^\ast(\iota_!A),r^\ast(\iota_!B)$-correspondences
 \[\iota_!W: s^\ast(\iota_!\cE)\hat{\otimes}_{s^\ast(\iota_!B)}\cF_b(i^\ast(\iota_!\sB))\To \cF_b(i^\ast(\iota_!\sA))\hat{\otimes}_{r^\ast(\iota_!A)}r^\ast(\iota_!\cE),
 \] 
 defined in a similar fashion as~\eqref{eq:def-iota_!phi}; so that $\iota_!W$ is compatible with the partial product of $\cG$ in the sense of the commutative diagram~\eqref{eq:equiv-corresp}. That the $\cG[\cU]$-equivariant $A,B$-correspondences $\cE$ and $\iota^\ast\iota_!\cE$ are isomorphic is an immediate consequence of the construction of $\iota_!\cE$ and $\iota_!W$. 		
\end{proof}

\begin{lem}
Suppose $x=(\cE,F)\in \EER_{\cG[\cU]}(A,B)$. Then $\iota_!x:=(\iota_!\cE,\iota_!F)\in \EER_\cG(\iota_!A,\iota_!B)$, where the operator $\iota_!F\in \cL(\iota_!\cE)$ is given by~\eqref{eq:def-iota_!T}.	
\end{lem}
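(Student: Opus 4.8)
The plan is to treat $\iota_!$ as a functorial operation on operators and to transport each of the four defining conditions of Definition~\ref{df:Kas-corresp} from $(\cE,F)$ over $\cG[\cU]$ to $(\iota_!\cE,\iota_!F)$ over $\cG$. The preceding Proposition already supplies everything needed for the underlying correspondence: $\iota_!\cE$ is a $\cG$-equivariant $\iota_!A,\iota_!B$-correspondence, with equivariance implemented by the pushforward isomorphism $\iota_!W$. Moreover $\iota_!F$, defined by~\eqref{eq:def-iota_!T} as the unique operator making the quotient diagram commute, is "real" and of degree $1$ because $F$ is and because the quotient map $\cE\To\iota_!\cE$ respects the grading and the "real" structure. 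Hence the whole content of the lemma reduces to verifying conditions (i)--(iv) for $\iota_!F$.

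The technical engine, which I would record first, is that the assignment $T\mapsto \iota_!T$ of~\eqref{eq:def-iota_!T} defines a degree-preserving, "real", unital ${}^\ast$-homomorphism $\iota_!\colon \cL(\cE)\To\cL(\iota_!\cE)$ carrying $\cK(\cE)$ into $\cK(\iota_!\cE)$: on rank-one operators one checks directly that $\iota_!(\theta_{\xi,\eta})=\theta_{\dot\xi,\dot\eta}$, and density of finite-rank operators together with the norm-decreasing property of $\iota_!$ yields the statement for all compacts. This homomorphism is compatible with the representations through~\eqref{eq:def-iota_!phi}, i.e. $(\iota_!\vp)(\iota_!(a))=\iota_!(\vp(a))$, and with the operations $s^\ast,r^\ast,i^\ast$ and graded tensor products via the identifications of the preceding Proposition; in particular $\iota_!(T_\xi)=T_{\dot\xi}$, $\iota_!(\theta_\xi)=\theta_{\dot\xi}$, and $\iota_!\big(W(s^\ast F\hat{\otimes}\Id)W^\ast\big)=\iota_!W\,(s^\ast(\iota_!F)\hat{\otimes}\Id)\,(\iota_!W)^\ast$. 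Finally I would invoke surjectivity of $\iota_!\colon A\To\iota_!A$ and $\iota_!\colon B\To\iota_!B$ (proved as for $B$ in the preceding Proposition), so that every element of $\iota_!A$, and every section in $\cF_b(i^\ast(\iota_!\sA))$, is of the form $\iota_!(a)$, respectively $\dot\xi$, for suitable data over $\cG[\cU]$.

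With this in hand, conditions (i)--(iii) are immediate: for $a\in A$ one has
\[
\iota_!(a)\big(\iota_!F-(\iota_!F)^\ast\big)=\iota_!\big(a(F-F^\ast)\big),\quad
\iota_!(a)\big((\iota_!F)^2-1\big)=\iota_!\big(a(F^2-1)\big),\quad
[\iota_!(a),\iota_!F]=\iota_!\big([a,F]\big),
\]
each lying in $\cK(\iota_!\cE)$ because the arguments on the right are compact by hypothesis and $\iota_!$ preserves compactness; surjectivity of $\iota_!\colon A\To\iota_!A$ upgrades these to all of $\iota_!A$.

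The delicate point, and the one I expect to be the main obstacle, is condition (iv), invariance modulo compacts. Here I would rephrase it through Remark~\ref{rem:F-connection}: condition (iv) for $(\cE,F)$ is the assertion that for every $\xi\in\cF_b(i^\ast\sA)$ the commutator $[\theta_\xi,\;r^\ast F\oplus W(s^\ast F\hat{\otimes}\Id)W^\ast]$ is compact. Applying $\iota_!$ and using the compatibilities above ($\iota_!(\theta_\xi)=\theta_{\dot\xi}$, $\iota_!(r^\ast F)=r^\ast(\iota_!F)$, and $\iota_!$ of the conjugated connection equals the $\iota_!W$-conjugate of $s^\ast(\iota_!F)\hat{\otimes}\Id$), this pushes forward to compactness of $[\theta_{\dot\xi},\;r^\ast(\iota_!F)\oplus \iota_!W(s^\ast(\iota_!F)\hat{\otimes}\Id)(\iota_!W)^\ast]$ for every such $\dot\xi$, since $\iota_!$ sends compacts to compacts. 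By surjectivity of $\iota_!$ onto $\cF_b(i^\ast(\iota_!\sA))$ this holds for all sections, which is exactly condition (iv) for $\iota_!F$ via Remark~\ref{rem:F-connection}. The real care is purely in matching the several identifications of the preceding Proposition so that $\iota_!$ genuinely intertwines the two connection conditions; once these are fixed, no estimate beyond preservation of compactness is required.
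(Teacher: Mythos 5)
Your proposal is correct and follows essentially the same route as the paper's own proof: treat $\iota_!$ as a degree-preserving, ``real'' $^\ast$-homomorphism $\cL(\cE)\To\cL(\iota_!\cE)$ sending $\cK(\cE)$ into $\cK(\iota_!\cE)$, push conditions (i)--(iii) forward directly, and handle condition (iv) via Remark~\ref{rem:F-connection} together with the identities $\iota_!(T_\xi)=T_{\dot{\xi}}$, $\iota_!(\theta_\xi)=\theta_{\dot{\xi}}$ and the tensor-product identifications from the preceding proposition. The only difference is that you make explicit the surjectivity/density step needed to pass from elements of the form $\iota_!(a)$ and $\dot{\xi}$ to arbitrary elements of $\iota_!A$ and $\cF_b(i^\ast(\iota_!\sA))$, a point the paper leaves implicit.
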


\begin{proof}
This is a matter of algebraic verifications. For instance, the map $\iota_!:\cL(\cE)\To \cL(\iota_!\cE)$ respects the degree and the "real" structures. Moreover $\iota_!$ sends $\cK(\cE)$ onto $\cK(\iota_!\cE)$ because $\iota_!(\theta_{\xi,\eta})=\theta_{\dot{\xi},\dot{\eta}}$ for all $\xi,\eta\in \cE$, $(\iota_!T_1)(\iota_!T_1)=\iota_!(T_1T_2)$, and $[\iota_!T_1,\iota_!T_2]=\iota_![T_1,T_2], , \forall T_1,T_2\in \cL(\cE)$; thus conditions (i)-(iii) in Definition~\ref{df:Kas-corresp} are satisfied by the pair $(\iota_!\cE,\iota_!F)$. To verify condition (iv), let us put $\cE_1=\cF_b(i^\ast\sA), \cE_2=r^\ast\cE$, and $\cE=\cE_1\hat{\otimes}_{r^\ast A}\cE_2$. Then $\cK(\iota_!\cE_2\oplus \iota_!\cE)=\cK(\iota_!(\cE_1\oplus\cE))$, and we have seen in the proof of Proposition~\ref{pro:pushforward-Fell-bdle} that $\iota\cE_1\hat{\otimes}_{\iota_!r^\ast A}\iota_!\cE=\iota_!(\cE_1\hat{\otimes}_{r^\ast A}\cE)$. It follows that for $\xi\in \cE_1$ and $\eta\in\cE_2$ , one has 
\[
\iota_!(T_\xi)=\overset{.}{\wideparen{T_\xi(\eta)}}=\overset{.}{\wideparen{\xi \otimes_{r^\ast A}\eta}}=\dot{\xi}\hat{\otimes}_{_{\iota_!r^\ast A}}\dot{\eta},
\]
which means that $\iota_!(T_\xi)=T_{\dot{\xi}}\in \cL(\iota_!\cE_2,\iota_!\cE)$, and hence $\iota_!(\theta_{\xi})=\theta_{\dot{\xi}}$ (recall notations used in Remark~\ref{rem:F-connection}). Then, 
\begin{eqnarray*}
[\theta_{\dot{\xi}}, \iota_!(r^\ast F)\oplus \iota_!(W(s^\ast F\hat{\otimes}_{s^\ast B \Id})W^\ast)] & = & \iota_!([\theta_\xi, r^\ast F\oplus W(s^\ast F\hat{\otimes}_{s^\ast B \Id})W^\ast] \\
 & \in & \cK(\iota_!(\cE_2\oplus \cE)) =\cK(\iota_!\cE_2\oplus \iota_!\cE);
\end{eqnarray*}
therefore, $\iota_!W(s^\ast(\iota_!F)\hat{\otimes}_{s^\ast\iota_!B}\Id)\iota_!W^\ast= \iota_!(W(s^\ast F\hat{\otimes}_{s^\ast B \Id})W^\ast)$ is an $r^\ast \iota_!F$-connection for $\iota_!\cE_1=\cF_b(i^\ast (\iota_!\sA))$.
\end{proof}

The following result can be proved with similar arguments as in~\cite[Th\'eor\`eme 7.1]{LeGall:KK_groupoid}, so we omit the proof.

\begin{thm}~\label{thm:functor-iota_!-in-KKR_G}
Let $\grpd$, $\cU$, $A$ and $B$ be as previously. Then the canonical "real" inclusion $\cG[\cU]\hookrightarrow \cG$ induces a group isomorphism
\[\iota^\ast: \KKR_{\cG}(\iota_!A,\iota_!B)\To \KKR_{\cG[\cU]}(A,B),\]
whose inverse is 
\[\iota_!: \KKR_{\cG[\cU]}(A,B)\ni [(\cE,F)]\mto [(\iota_!\cE,\iota_!F)]\in \KKR_{\cG}(\iota_!A,\iota_!B).\]	
\end{thm}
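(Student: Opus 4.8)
The plan is to prove that the pushforward $\iota_!$ and the pullback $\iota^\ast$ are mutually inverse group isomorphisms between $\KKR_{\cG[\cU]}(A,B)$ and $\KKR_\cG(\iota_!A,\iota_!B)$. Since the preceding Proposition and Lemma already establish that $\iota_!$ sends an equivariant Kasparov $A,B$-correspondence to an equivariant Kasparov $\iota_!A,\iota_!B$-correspondence (verifying conditions (i)--(iv) of Definition~\ref{df:Kas-corresp}), the bulk of the work is to check that both $\iota_!$ and $\iota^\ast$ are well-defined on homotopy classes, are group homomorphisms, and compose to the identity in both orders. First I would verify that $\iota_!$ respects the equivalence relations defining $\KKR$: it carries unitary equivalences to unitary equivalences (since $\iota_!$ of an isomorphism of correspondences is again one, by functoriality of the pushout construction), and it carries homotopies to homotopies, using the natural identification $\iota_!(B[0,1]) \cong (\iota_!B)[0,1]$ together with compatibility of $\iota_!$ with the evaluation maps $ev_t$. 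Additivity under direct sums is immediate from $\iota_!(\cE_1\oplus\cE_2)\cong \iota_!\cE_1\oplus\iota_!\cE_2$ and $\iota_!(F_1\oplus F_2)=\iota_!F_1\oplus\iota_!F_2$, so $\iota_!$ is a group homomorphism.

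Next I would treat the pullback $\iota^\ast:\KKR_\cG(\iota_!A,\iota_!B)\To \KKR_{\cG[\cU]}(A,B)$, which is a special case of the strict-morphism functoriality already recorded in the Definition and Lemma for $f^\ast$, applied to the canonical inclusion $f=\iota:\cG[\cU]\hookrightarrow \cG$. That lemma guarantees $\iota^\ast$ is a well-defined group homomorphism. The heart of the theorem is then the pair of identities $\iota^\ast\circ\iota_! = \Id$ on $\KKR_{\cG[\cU]}(A,B)$ and $\iota_!\circ\iota^\ast = \Id$ on $\KKR_\cG(\iota_!A,\iota_!B)$. For the first identity, the preceding Proposition provides a canonical isomorphism of $\cG[\cU]$-equivariant correspondences $\iota^\ast\iota_!\cE\cong \cE$; under this isomorphism the pullback operator $\iota^\ast(\iota_!F)=(\iota_!F)\hat{\otimes}_{C_0(X)}\Id$ corresponds to $F$, because $\iota_!F$ is defined fibrewise by~\eqref{eq:def-iota_!T} and restricting back over $\coprod_j U_j$ simply recovers the original fibres. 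Hence $\iota^\ast\iota_!(\cE,F)$ is unitarily equivalent to $(\cE,F)$, giving $\iota^\ast\circ\iota_!=\Id$ at the level of $\EER$ and a fortiori on $\KKR$.

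For the reverse identity $\iota_!\circ\iota^\ast=\Id$, I would exhibit a natural isomorphism $\iota_!\iota^\ast\cA\cong\cA$ for any generalised $\cG$-action $\cA$, together with the analogous isomorphism on correspondences and on the Fredholm-type operators, compatible with the equivariance data $W$. Concretely, for $\cE$ a $\cG$-equivariant $\iota_!A,\iota_!B$-correspondence one uses that $\iota^\ast$ restricts the Fell bundle over $\cG$ to one over $\cG[\cU]$ and then $\iota_!$ reassembles the $I_g$-indexed direct sums; the composite recovers the original fibres because the reconstruction $\iota_!\sA_g=\bigoplus_{(j_0,j_1)\in I_g}\sA_{g_{j_0j_1}}$ from the proof of Proposition~\ref{pro:pushforward-Fell-bdle} is inverse, up to canonical isomorphism, to the restriction. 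Here local finiteness of $\cU$ (available since $X$ is paracompact) ensures the direct sums are finite and the identifications are genuine isomorphisms of Fell bundles rather than mere completions.

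The main obstacle I expect is not any single algebraic identity but the careful bookkeeping of the canonical identifications $s^\ast(\iota_!\cE)\cong\iota_!(s^\ast\cE)$, $i^\ast(\iota_!\sA)\cong\iota_!(i^\ast\sA)$, and so on, and their compatibility with the equivariance isomorphisms $W$ under the passage $\cE\mapsto\iota_!W$ constructed in the Proposition. In particular one must check that under these identifications the reconstructed $W$ for $\iota_!\iota^\ast\cE$ agrees with the original $W$, so that the two correspondences are isomorphic as \emph{equivariant} correspondences and not merely as correspondences; this requires tracking the diagram~\eqref{eq:equiv-corresp} through the fibrewise direct-sum decompositions. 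Since this is precisely the computation carried out for Le Gall in~\cite[Th\'eor\`eme 7.1]{LeGall:KK_groupoid} in the automorphism setting, and the present generalised-action setting differs only by the Fell-bundle formalism already set up in the earlier results of this section, I would invoke that argument and omit the remaining routine verifications.
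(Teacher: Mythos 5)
Your overall architecture (well-definedness and additivity of $\iota_!$, obtaining $\iota^\ast$ from strict functoriality, then checking the two composites) is reasonable, and your first composite $\iota^\ast\circ\iota_!=\Id$ does follow, granting the paper's proposition that $\iota^\ast\iota_!\cE\cong\cE$ as equivariant correspondences. The genuine gap is in your argument for the reverse composite $\iota_!\circ\iota^\ast=\Id$: the natural isomorphism $\iota_!\iota^\ast\cA\cong\cA$ that you propose to exhibit does not exist. Compute the fibres: for a Fell bundle $\sA\To\cG$ one has $(\iota^\ast\sA)_{g_{j_0j_1}}=\sA_g$, hence
\[
(\iota_!\iota^\ast\sA)_g\;=\;\bigoplus_{(j_0,j_1)\in I_g}\sA_g,
\]
one copy of $\sA_g$ for \emph{every} pair $(j_0,j_1)$ with $r(g)\in U_{j_0}$ and $s(g)\in U_{j_1}$, with multiplication $a_{j_0j_1}\cdot b_{j_1j_2}=(ab)_{j_0j_2}$. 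Over a unit $x$ this is $\sA_x$ tensored with the full matrix algebra on the finite set $I_x$; globally, $\iota_!\iota^\ast\sA$ is $\sA\hat{\otimes}_X$ (a bundle of matrix algebras attached to the cover) --- already for a two-set cover of a point it is the $2\times 2$ linking algebra, not the original algebra. So assembly after restriction is \emph{not} inverse to restriction; it is only canonically Morita equivalent to the identity, and local finiteness of $\cU$ merely makes the sums finite without collapsing this multiplicity. (The same multiplicity phenomenon shows that the isomorphism $\iota^\ast(\iota_!\sA)\cong\sA$ asserted in Proposition~\ref{pro:pushforward-Fell-bdle} must itself be understood via a canonical Morita equivalence rather than a naive fibrewise identification.)

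This gap is essential rather than cosmetic. From $\iota^\ast\circ\iota_!=\Id$ alone you only obtain that $\iota_!$ is injective and $\iota^\ast$ is surjective; the entire content of the theorem is the injectivity of $\iota^\ast$ (equivalently, surjectivity of $\iota_!$), which is exactly what your false identification was meant to deliver. The correct repair is to show that $\iota_!\circ\iota^\ast$ is implemented by the Kasparov product on both sides with the class of the canonical imprimitivity bimodule realising the Morita equivalence between $\iota_!\iota^\ast(\iota_!A)$ and $\iota_!A$ (respectively for $B$), and that this class is a $\KKR_\cG$-equivalence, so that the induced endomorphism of $\KKR_\cG(\iota_!A,\iota_!B)$ is the identity; this is a stability/Morita-invariance argument in $\KKR_\cG$, carried out compatibly with the equivariance operators $W$. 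That argument --- and not a fibrewise inversion of the assembly construction --- is the non-routine content of \cite[Th\'eor\`eme 7.1]{LeGall:KK_groupoid}, which is also all the paper itself offers (it omits the proof and cites Le Gall). Your final appeal to that reference is therefore the right move, but your sketch mischaracterises what it is needed for, and followed literally your proof of the second composite would fail.
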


\begin{cor}
The isomorphism $\iota_!: \KKR_{\cG[\cU]}(A,B)\To \KKR_\cG(\iota_!A,\iota_!B)$ is natural with respect to Kasparov product; \emph{i.e.}, 
\[\iota_!x\hat{\otimes}_{\iota_!D}\iota_!y=\iota_!(x\hat{\otimes}_Dy), \forall x\in \KKR_{\cG[\cU]}(A,D), y\in \KKR{\cG[\cU]}(D,B).\]	
\end{cor}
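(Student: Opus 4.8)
The plan is to obtain the statement formally from two facts already in hand, with no new connection-theoretic estimate required: first, that the inclusion $\iota\colon\cG[\cU]\To\cG$ is a \emph{strict} morphism of "real" groupoids, so that the Definition and Lemma on functoriality applies and renders $\iota^\ast$ natural with respect to the Kasparov product; and second, that $\iota_!$ is a two-sided inverse of $\iota^\ast$ by Theorem~\ref{thm:functor-iota_!-in-KKR_G}.

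I would begin by recording the naturality of $\iota^\ast$. Applying the Definition and Lemma preceding Proposition~\ref{pro:pushforward-Fell-bdle} to the strict morphism $\iota$, one obtains, for all $X\in\KKR_\cG(\iota_!A,\iota_!D)$ and $Y\in\KKR_\cG(\iota_!D,\iota_!B)$, the identity
\[
\iota^\ast\bigl(X\hat{\otimes}_{\iota_!D}Y\bigr)=\iota^\ast(X)\hat{\otimes}_{\iota^\ast\iota_!D}\iota^\ast(Y),
\]
the product on the right being taken over the $\cG[\cU]$-algebra $\iota^\ast\iota_!D$. At this point I would invoke the canonical isomorphism $\iota^\ast\iota_!D\cong D$ of $\cstar$-algebras with generalised $\cG[\cU]$-action supplied by Proposition~\ref{pro:pushforward-Fell-bdle} (the Fell-bundle isomorphism $\iota^\ast(\iota_!\sA)\cong\sA$), and read the right-hand product as a product over $D$.

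Specialising to $X=\iota_!x$ and $Y=\iota_!y$, with $x\in\KKR_{\cG[\cU]}(A,D)$ and $y\in\KKR_{\cG[\cU]}(D,B)$, Theorem~\ref{thm:functor-iota_!-in-KKR_G} gives $\iota^\ast\iota_!x=x$ and $\iota^\ast\iota_!y=y$, so the displayed naturality becomes
\[
\iota^\ast\bigl(\iota_!x\hat{\otimes}_{\iota_!D}\iota_!y\bigr)=x\hat{\otimes}_Dy.
\]
Applying $\iota_!$ to both sides and using that $\iota_!\circ\iota^\ast=\Id$ as well (again Theorem~\ref{thm:functor-iota_!-in-KKR_G}) yields precisely $\iota_!x\hat{\otimes}_{\iota_!D}\iota_!y=\iota_!(x\hat{\otimes}_Dy)$, which is the asserted naturality.

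The one genuinely delicate point, and the step on which I expect to spend the most care, is checking that the isomorphism $\iota^\ast\iota_!D\cong D$ is compatible with the two Kasparov products, i.e.\ that transporting $\hat{\otimes}_{\iota^\ast\iota_!D}$ along it produces $\hat{\otimes}_D$. This reduces to tracing the Fell-bundle identification $\iota^\ast(\iota_!\sE)\cong\sE$ through the balanced tensor product that defines the product; since both sides are built functorially from the same Fell-bundle data, it should follow from naturality of the $\iota_!$ construction rather than from any analytic input.
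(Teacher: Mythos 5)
Your proposal is correct and is essentially the argument the paper intends: the corollary is stated there without proof, as an immediate formal consequence of Theorem~\ref{thm:functor-iota_!-in-KKR_G} (that $\iota_!$ and $\iota^\ast$ are mutually inverse) combined with the naturality of pullback along the strict morphism $\iota$ from the Definition and Lemma on strict functoriality, which is exactly the derivation you give. The point you flag as delicate --- that the identification $\iota^\ast\iota_!D\cong D$ of Proposition~\ref{pro:pushforward-Fell-bdle} transports $\hat{\otimes}_{\iota^\ast\iota_!D}$ to $\hat{\otimes}_D$ --- is indeed the only thing to check, and it is already implicit in the paper's formulation of $\iota^\ast$ as a map landing in $\KKR_{\cG[\cU]}(A,B)$ rather than $\KKR_{\cG[\cU]}(\iota^\ast\iota_!A,\iota^\ast\iota_!B)$.
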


Now Theorem~\ref{thm:functor-iota_!-in-KKR_G} enables us to define the pull-back of a  $\cstar$-algebra endowed with a generalised action along a morphism in the category  $\RG$. 

\begin{df}
Let $Z:\Ga\To \cG$ be a generalised "real" homomorphism, $A$ and $B$ be $\cstar$-algebras endowed with generalised "real" $\cG$-actions. Let $(\cU,f)$ be a representative of a morphism in $\RG_\Om$ realisng $Z$. Let $\iota:\Ga[\cU]\To \Ga$ be the canonical inclusion. Define the pull-back of $A$ and $B$ along $Z$ by
\begin{equation*}
Z^\ast B:=\iota_!f^\ast A, \quad {\rm and\ } Z^\ast B:=\iota_!f^\ast B.
\end{equation*}
Then we define the pull-back homomorphism 
\[Z^\ast: \KKR_\cG(A,B) \To \KKR_\Ga(Z^\ast A,Z^\ast B),\]
as the composite 
\[\KKR_\cG(A,B)\stackrel{f^\ast}{\To} \KKR_{\Ga[\cU]}(f^\ast A,f^\ast B) \stackrel{\iota_!}{\To} \KKR_\Ga(\iota_!f^\ast A,\iota_!f^\ast B),\] 	
\end{df}

Notice that by similar arguments as in~\cite[Th\'eor\`eme 7.2]{LeGall:KK_groupoid}, $Z^\ast$ is well defined, that is, its construction does not depend on the choice of the pair $[(\cU,f)]$, and that it is functorial in $\RG$ and is natural with respect to Kasparov product. In particular, if $Z$ is a Morita equivalence, then $Z^\ast$ is a group isomorphism.


\section{$\KKR_\cG$-equivalence}

The notion of $\KK$-equivalence, which has already been treated in many  textbooks and papers,  provides important features in the study of $\K$-amenability and the Baum-Connes conjecture (see Blackadar's book~\cite[Definition 19.1.1]{Blackadar:K-theory1}). It also gives a powerful way to establish Bott periodicity in $\K$-theory of $\cstar$-algebras. 

This notion as well as the results around it extend in a very natural way to the more general setting of equivariant $\KK$-theory for generalised "real" groupoid actions.  

\begin{df}.
Let $\grpd$, be a "real" groupoid, $A$ and $B$ be $\cstar$-algebras endowed with generalised $\cG$-actions. We say that an element $x\in \KKR_\cG(A,B)$ is a $KKR_\cG$-\emph{equivalence} if there is $y\in \KKR_\cG(B,A)$ such that 
\[x\hat{\otimes}_{\cG,B}y=\id_A, \quad {\rm and\ } y\hat{\otimes}_{\cG,A}x=\id_B.\]
$A$ and $B$ are said \emph{$\KKR_\cG$-equivalent} if there exists a $\KKR_\cG$-equivalence in $\KKR_\cG(A,B)$.	
\end{df}

\begin{lem}~\label{lem:KKR_G-equivalence}
Assume $x\in \KKR_\cG(A,B)$ is a $\KKR_\cG$-equivalence. Then for any "real" graded $\cstar$-algebra $D$ endowed with a generalised "real" $\cG$-action, the maps 
\[x\hat{\otimes}_{\cG,B} (\cdot): \KKR_\cG(B,D)\To \KKR_\cG(A,D), \quad {\rm and\ } (\cdot)\hat{\otimes}_{\cG,A}x:\KKR_\cG(D,A)\To \KKR_\cG(D,B),\]
are isomorphisms which are natural in $D$ by associativity.	
\end{lem}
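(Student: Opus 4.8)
The plan is to prove both statements simultaneously by exhibiting explicit two-sided inverses assembled from the $\KKR_\cG$-equivalence, so that the whole argument reduces to formal manipulations of the Kasparov product. Fix $y\in\KKR_\cG(B,A)$ witnessing that $x$ is a $\KKR_\cG$-equivalence, i.e.\ $x\hat{\otimes}_{\cG,B}y=\id_A$ and $y\hat{\otimes}_{\cG,A}x=\id_B$. Everything will follow from three properties of the product already recorded above: it is bilinear, associative, and the canonical classes $\id_A,\id_B$ act as units.

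For the first map I would take as candidate inverse
\[
y\hat{\otimes}_{\cG,A}(\cdot)\colon \KKR_\cG(A,D)\To \KKR_\cG(B,D).
\]
For $z\in\KKR_\cG(B,D)$, associativity together with $y\hat{\otimes}_{\cG,A}x=\id_B$ gives $y\hat{\otimes}_{\cG,A}(x\hat{\otimes}_{\cG,B}z)=(y\hat{\otimes}_{\cG,A}x)\hat{\otimes}_{\cG,B}z=\id_B\hat{\otimes}_{\cG,B}z=z$; symmetrically, for $w\in\KKR_\cG(A,D)$ one finds $x\hat{\otimes}_{\cG,B}(y\hat{\otimes}_{\cG,A}w)=(x\hat{\otimes}_{\cG,B}y)\hat{\otimes}_{\cG,A}w=\id_A\hat{\otimes}_{\cG,A}w=w$. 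Bilinearity makes $x\hat{\otimes}_{\cG,B}(\cdot)$ a group homomorphism, hence a group isomorphism.

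For the second map the symmetric candidate inverse is
\[
(\cdot)\hat{\otimes}_{\cG,B}y\colon \KKR_\cG(D,B)\To \KKR_\cG(D,A),
\]
and the same bookkeeping, now reassociating on the right, yields $(u\hat{\otimes}_{\cG,A}x)\hat{\otimes}_{\cG,B}y=u\hat{\otimes}_{\cG,A}(x\hat{\otimes}_{\cG,B}y)=u$ for $u\in\KKR_\cG(D,A)$ and $(v\hat{\otimes}_{\cG,B}y)\hat{\otimes}_{\cG,A}x=v$ for $v\in\KKR_\cG(D,B)$, so $(\cdot)\hat{\otimes}_{\cG,A}x$ is again an isomorphism. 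Naturality in $D$ is then immediate: for any class in $\KKR_\cG(D,D')$ the required commutation with the induced maps is just the reassociation of a triple Kasparov product, hence a further instance of associativity.

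There is no genuine obstacle in this lemma; it is the verbatim transcription of the classical $\KK$-equivalence argument (cf.~\cite[Definition 19.1.1]{Blackadar:K-theory1}) into the generalised-action setting. The only point demanding care is that each step invokes associativity and unitality of $\hat{\otimes}_{\cG,\cdot}$, which were asserted (rather than reproved) for the present theory when the Kasparov product was introduced; the lemma is therefore contingent on those structural properties holding in full, exactly as in the group-equivariant case treated by Le Gall.
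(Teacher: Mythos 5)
Your proposal is correct and follows essentially the same route as the paper: the paper's proof likewise exhibits $y\hat{\otimes}_{\cG,A}(\cdot)\colon \KKR_\cG(A,D)\To \KKR_\cG(B,D)$ as the inverse of the first map and defers the remaining verifications to the classical argument of Kasparov (Theorem 4.6 of his 1980 paper) and Blackadar \S 19.1, which is exactly the bilinearity--associativity--unitality bookkeeping you carry out explicitly. Your write-up simply fills in the cancellations and the symmetric case that the paper leaves implicit.
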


The proof is almost the same as that of~\cite[Theorem 4.6]{Kasparov:Operator_K}. For instance, the map 
$$\KKR_\cG(A,D)\ni z\mto y\hat{\otimes}_{\cG,A}z\in \KKR_\cG(B,D)$$ 
is an inverse of the first homomorphism. See also~\cite[\S19.1]{Blackadar:K-theory1}).

\begin{pro}~\label{pro:functorial-KKR_G-equivalence}
$\KKR_\cG$-equivalence is functorial in $\RG$ in the following sense: if $Z:\Ga\To \cG$ is a generalised "real" homomorphism, and if $A$ and $B$ are $\KKR_\cG$-equivalent, then $Z^\ast A$ and $Z^\ast B$ are $\KKR_\Ga$-equivalent.	
\end{pro}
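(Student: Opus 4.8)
The plan is to reduce the statement to the functoriality of $Z^\ast$ with respect to the Kasparov product, which was already established (as remarked after the definition of $Z^\ast$, following the arguments of~\cite[Th\'eor\`eme 7.2]{LeGall:KK_groupoid}). The key observation is that a $\KKR_\cG$-equivalence is defined purely in terms of the Kasparov product, the canonical units $\id_A,\id_B$, and the pullback operation --- all of which $Z^\ast$ respects.

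First I would recall the hypothesis: since $A$ and $B$ are $\KKR_\cG$-equivalent, there are classes $x\in \KKR_\cG(A,B)$ and $y\in \KKR_\cG(B,A)$ with $x\hat{\otimes}_{\cG,B}y=\id_A$ and $y\hat{\otimes}_{\cG,A}x=\id_B$. I would then apply the pullback homomorphism $Z^\ast$ to both $x$ and $y$, obtaining $Z^\ast x\in \KKR_\Ga(Z^\ast A,Z^\ast B)$ and $Z^\ast y\in \KKR_\Ga(Z^\ast B,Z^\ast A)$.

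Next I would invoke the naturality of $Z^\ast$ with respect to the Kasparov product, which gives
\[
Z^\ast x\hat{\otimes}_{\Ga,Z^\ast B}Z^\ast y = Z^\ast(x\hat{\otimes}_{\cG,B}y)=Z^\ast(\id_A),
\]
and symmetrically $Z^\ast y\hat{\otimes}_{\Ga,Z^\ast A}Z^\ast x = Z^\ast(\id_B)$. The remaining point is to identify $Z^\ast(\id_A)$ with the unit $\id_{Z^\ast A}\in \KKR_\Ga(Z^\ast A,Z^\ast A)$, and likewise for $B$. Since $\id_A$ is represented by the equivariant Kasparov correspondence $(A,0)$ with $A$ acting on itself by left multiplication, unwinding the definition $Z^\ast=\iota_!\circ f^\ast$ shows that $f^\ast(A,0)=(f^\ast A,0)$ and $\iota_!(f^\ast A,0)=(\iota_!f^\ast A,0)=(Z^\ast A,0)$, which is exactly the representative of $\id_{Z^\ast A}$; hence $Z^\ast(\id_A)=\id_{Z^\ast A}$. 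Substituting, we conclude $Z^\ast x\hat{\otimes}_{\Ga,Z^\ast B}Z^\ast y=\id_{Z^\ast A}$ and $Z^\ast y\hat{\otimes}_{\Ga,Z^\ast A}Z^\ast x=\id_{Z^\ast B}$, so $Z^\ast x$ is a $\KKR_\Ga$-equivalence, proving $Z^\ast A$ and $Z^\ast B$ are $\KKR_\Ga$-equivalent.

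The only genuinely substantive inputs are the naturality of $Z^\ast$ with respect to the Kasparov product and the compatibility $Z^\ast(\id_A)=\id_{Z^\ast A}$; the former is assumed from the discussion following the construction of $Z^\ast$, so the main (minor) obstacle is verifying the latter at the level of representatives rather than merely up to homotopy. I expect this to be routine, following directly from the functorial construction of $f^\ast$ and $\iota_!$ on the unit correspondence $(A,0)$.
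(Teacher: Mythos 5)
Your proposal is correct and follows essentially the same route as the paper's proof: apply $Z^\ast$ to the pair of classes witnessing the equivalence and invoke naturality of $Z^\ast$ with respect to the Kasparov product. The only difference is that you explicitly verify $Z^\ast(\id_A)=\id_{Z^\ast A}$ at the level of the representative $(A,0)$, a point the paper's proof uses tacitly.
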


\begin{proof}
By naturality of $Z^\ast$ with respect to Kasparov product, we have 
\[\id_{Z^\ast A}=Z^\ast(x\hat{\otimes}_{\cG,A}y)=Z^\ast(x)\hat{\otimes}_{\Ga,Z^\ast A}Z^\ast(y),\]
and 
\[\id_{Z^\ast B}=Z^\ast(y\hat{\otimes}_{\cG,B}x)=Z^\ast(y)\hat{\otimes}_{\Ga,Z^\ast B}Z^\ast(x);\]
therefore $Z^\ast(x)\in \KKR_\Ga(Z^\ast A,Z^\ast B)$ is a $\KKR_\Ga$-equivalence.	
\end{proof}


\section{Bott periodicity}

In this section we establish Bott periodicity in $\KKR_\cG$-theory. We first need some definitions and constructions.

\begin{df}
Let $\grpd$ be a "real" groupoid. A \emph{"real" Euclidean vector bundle of type $p-q$} over $\grpd$ is a Euclidean vector bundle $\pi:E\To X$ of rank $p+q$ equipped with a "real" $\cG$-action (with respect to $\pi$) such that the Euclidean metric is $\cG$-invariant and the "real" space $E$ is locally homeomorphic to $\RR^{p,q}$; that is to say, for every $x\in X$, there is a "real" open neigborhood $U$ of $x$ and a "real" homeomorphism $h_U:\pi^{-1}(U)\To U\times \RR^{p,q}$, where $U\times \RR^{p,q}$ is provided with the "real" structure $(x,t)\mto (\bar{x},\bar{t})$. This is equivalent to the existence of a "real" open cover $\cU=(U_j)_{j\in J}$ and a family  of homeomorphisms $h_j:\pi^{-1}(U_j)\To U_j\times \RR^{p+q}$ such that the following diagram commute
\begin{equation}
		\xymatrix{\pi^{-1}(U_j) \ar[r]^{h_j} \ar[d]_{\tau} & U_j\times \RR^{p+q} \ar[d]^{\tau\times (\id_p-\id_q)}\\ \pi^{-1}(U_{\bar{j}})\ar[r]^{h_{\bar{j}}} & U_{\bar{j}}\times \RR^{p+q}}
	\end{equation}	
\end{df}

For $p,q\in \NN$ with $n=p+q\neq 0$, we define the "real" group $O(p+q)$ to be the orthogonal group $O(n)$ equipped with the involution induced from $\RR^{p,q}$ (we identify $M_{p+q}(\RR)$ with $\cL(\RR^{p,q})$, the latter is then a "real" space). Similarly one defines the "real" group $SO(p+q)$

\begin{df}
Associated to any "real" Euclidean vector bundle $E$ of type $p-q$ over the "real" groupoid $\grpd$, there is a generalised "real" homomorphism 
\[\FF(E):\cG\To O(p+q),\]
where $\FF(E)$ is the frame bundle of $E\To X$.	
\end{df}

\begin{rem}
The above definition does make sense, for the fibre of the $O(p+q)$-principal bundle $\FF(E)\To X$ at a point $x\in X$ identifies to the $\RR$-linear space $\Isom(\RR^{p+q},E_x)$ of $\RR$-linear isomorphisms; so that $\cG$ acts on $\FF(E)$ by $g\cdot (s(g),\vp)\mto (r(g),g\cdot \vp)$, for $\vp\in \Isom(\RR^{p+q},E_{s(g)})$, where $(g\cdot \vp)(t):=g\cdot \vp(t),t\in \RR^{p+q}$. $\FF(E)$ is equipped with the "real" structure $(x,\vp)\mto (\bar{x},\bar{\vp})$, where $\bar{\vp}(t):=\overline{\vp(\bar{t})},t\in \RR^{p+q}$. It is clear that the actions by $\cG$ and $O(p+q)$ are compatible with this involution. 
\end{rem}

\begin{ex}
The trivial bundle $=X\times \RR^{p,q}\To X$ is a "real" Euclidean vector bundle of type $p-q$ over $\grpd$ with respect to the "real" $\cG$-action $g\cdot (s(g),t)\mto (r(g),t)$. The associated generalised "real" homomorphism $\FF(X\times \RR^{p,q})$ from $\grpd$ to $\xymatrix{O(p+q)\dar[r]& \cdot}$ is isomorphic to the trivial "real" $O(p+q)$-principal $\cG$-bundle $X\times O(p+q)\To X$; we denote it by $\FF_{p,q}$.
\end{ex}

Recall that $\Cl_{p,q}:= \Cl(\RR^{p,q}):=Cl_{p,q}\otimes_{\RR}\CC=Cl(\RR^{p,q})\otimes_{\RR}\CC$ is the "real" graded Clifford $\cstar$-algebra, where the "real" structure is $x\otimes_\RR \lambda \mto \bar{x}\otimes_\RR \bar{\lambda}$, and where the involution "bar" in $Cl(\RR^{p,q})$ is induced from that of $\RR^{p,q}$. The "real" action of $O(p+q)$ on $\RR^{p,q}$ induces a "real" $O(p+q)$-action on $\Cl_{p,q}$. 
 
Recall that Kasparov has defined in~\cite[\S5]{Kasparov:Operator_K} a $\KKR_{O(p+q)}$-equivalence $$\al_{p,q}\in \KKR_{O(p+q)}\left(C_0(\RR^{p,q}),\Cl_{p,q}\right), \quad {\rm and\ }\beta_{p,q}\in \KKR_{O(p+q)}\left(\Cl_{p,q},C_0(\RR^{p,q})\right),$$ 
providing a $\KKR_{O(p+q)}$-equivalence between $C_0(\RR^{p,q})$ and $\Cl_{p,q}$.

We will use these elements to prove \emph{Bott periodicity} in generalised "real" groupoid equivariant $\KK$-theory as well as the \emph{Thom isomorphism} in twisted $\K$-theory which will be discussed in the next section.

\begin{thm}[Bott periodicity]
Let $\grpd$ be a locally compact second-countable "real" groupoid, and let $A$ and $B$ be $\cstar$-algebras endowed with generalised $\cG$-actions. Then the Kasparov product with $\FF_{p,q}^\ast\al_{p,q}\in \KKR_{\cG}\left(\Co(X)\otimes C_0(\RR^{p,q}),C_0(X)_{p,q}\right)$ defines an isomorphism
\begin{eqnarray*}
\KKR_{\cG,i+p-q}(A,B) & \cong & \KKR_{\cG,i}(A(\RR^{p,q}),B),
\end{eqnarray*}
where $A(\RR^{p,q})=C_0(\RR^{p,q};A)=C_0(\RR^{p,q})\otimes A$.	
\end{thm}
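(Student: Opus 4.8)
The plan is to transport Kasparov's $O(p+q)$-equivariant Bott equivalence to the groupoid $\cG$ through the frame bundle $\FF_{p,q}$, tensor it up by $A$, and then read off the degree shift from the Clifford-algebra bookkeeping.

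First I would observe that the classes $\al_{p,q}$ and $\be_{p,q}$ exhibit $C_0(\RR^{p,q})$ and $\Cl_{p,q}$ as mutually inverse in $\KKR_{O(p+q)}$, so these two algebras are $\KKR_{O(p+q)}$-equivalent. Viewing the frame bundle as a generalised morphism $\FF_{p,q}:\cG\To O(p+q)$, Proposition~\ref{pro:functorial-KKR_G-equivalence} applies verbatim: the pull-backs $\FF^\ast_{p,q}C_0(\RR^{p,q})$ and $\FF^\ast_{p,q}\Cl_{p,q}$ are $\KKR_\cG$-equivalent, the equivalence being $\FF^\ast_{p,q}\al_{p,q}$ with inverse $\FF^\ast_{p,q}\be_{p,q}$ (here one uses that $Z^\ast$ is natural with respect to the Kasparov product, so $\FF^\ast_{p,q}\al_{p,q}\hat{\otimes}\FF^\ast_{p,q}\be_{p,q}=\FF^\ast_{p,q}(\al_{p,q}\hat{\otimes}\be_{p,q})=\id$). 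Since $\FF_{p,q}$ is the trivial frame bundle $X\times O(p+q)$, the pull-backs are identified with $\Co(X)\otimes C_0(\RR^{p,q})$ and $C_0(X)_{p,q}$, exactly the domain and codomain of $\FF^\ast_{p,q}\al_{p,q}$ recorded in the statement.

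Next I would push this equivalence inside $A$ by the functor $\tau_A$. Using $A(\RR^{p,q})\cong A\hat{\otimes}_{C_0(X)}(\Co(X)\otimes C_0(\RR^{p,q}))$ and $A_{p,q}\cong A\hat{\otimes}_{C_0(X)}C_0(X)_{p,q}$, the element $\tau_A(\FF^\ast_{p,q}\al_{p,q})$ lands in $\KKR_\cG(A(\RR^{p,q}),A_{p,q})$. The key technical point is that $\tau_A$ carries $\KKR_\cG$-equivalences to $\KKR_\cG$-equivalences; this follows once one checks that $\tau_A$ is multiplicative for the Kasparov product and sends $\id$ to $\id$, so that $\tau_A(\FF^\ast_{p,q}\al_{p,q})\hat{\otimes}\tau_A(\FF^\ast_{p,q}\be_{p,q})=\tau_A(\id)=\id$ and symmetrically. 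Granting this, $A(\RR^{p,q})$ and $A_{p,q}$ are $\KKR_\cG$-equivalent.

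Finally I would combine this with the Clifford degree bookkeeping. For $i\ge 0$ one has $A(\RR^{p,q})_{i,0}\cong (A_{i,0})(\RR^{p,q})$, so applying the equivalence of the previous step to $A_{i,0}$ in place of $A$ and then Lemma~\ref{lem:KKR_G-equivalence} gives
\[\KKR_{\cG,i}(A(\RR^{p,q}),B)=\KKR_\cG(A(\RR^{p,q})_{i,0},B)\cong \KKR_\cG(A\hat{\otimes}\Cl_{i+p,q},B).\]
Writing $\Cl_{i+p,q}\cong \Cl_{i+p-q,0}\hat{\otimes}\Cl_{q,q}$ and using the standard graded Morita triviality $\Cl_{q,q}\sim_{\KKR}\CC$ (which is $\cG$-equivariant, $\Cl_{q,q}$ being a constant coefficient algebra), the right-hand side is isomorphic to $\KKR_\cG(A_{i+p-q,0},B)=\KKR_{\cG,i+p-q}(A,B)$; negative degrees are handled by the symmetric formula using $B_{0,j}$. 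Concatenating these isomorphisms, the Kasparov product with $\FF^\ast_{p,q}\al_{p,q}$ (tensored by $A$) yields the asserted isomorphism $\KKR_{\cG,i+p-q}(A,B)\cong \KKR_{\cG,i}(A(\RR^{p,q}),B)$. I expect the main obstacle to be the middle step: establishing that $\tau_A$ respects the Kasparov product in the generalised-action setting, since the whole argument rests on transporting an abstract equivalence through it, whereas the frame-bundle and Clifford manipulations are routine once this compatibility is in hand.
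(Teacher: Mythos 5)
Your proposal is correct and takes essentially the same route as the paper: pull back Kasparov's equivalence $\al_{p,q}$ along the trivial frame bundle $\FF_{p,q}$, use Proposition~\ref{pro:functorial-KKR_G-equivalence} to see that $\FF_{p,q}^\ast\al_{p,q}$ is a $\KKR_\cG$-equivalence, and then convert this equivalence into the stated isomorphism via the Kasparov product and Lemma~\ref{lem:KKR_G-equivalence}. The only difference is packaging: the paper performs the tensoring by $A$ and the product in one shot using the generalised product $\KKR_{\cG,i}(A_1,B_1\hat{\otimes}_{C_0(X)}D)\otimes_D \KKR_{\cG,j}(D\hat{\otimes}_{C_0(X)}A_2,B_2)\To \KKR_{\cG,i+j}(A_1\hat{\otimes}_{C_0(X)}A_2,B_1\hat{\otimes}_{C_0(X)}B_2)$ with $D=C_0(X)_{p,q}$ (which has the multiplicativity of $\tau_A$ that you single out built into it), whereas you factor the argument explicitly through $\tau_A$ and spell out the Clifford-algebra degree bookkeeping ($\Cl_{i+p,q}\cong\Cl_{i+p-q,0}\hat{\otimes}\Cl_{q,q}$ with $\Cl_{q,q}$ graded Morita trivial) that the paper leaves implicit in its final identification.
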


\begin{proof}
First of all notice that the pullbacks $\FF^\ast_{p,q}(C_0(\RR^{p,q}))$ and $\FF_{p,q}^\ast(\Cl_{p,q})$ via the generalised homomorphism $\FF_{p,q}:\cG\To O(p+q)$ are isomorphic to $C_0(X;C_0(\RR^{p,q}))=C_0(X)\otimes C_0(\RR^{p,q})$ and $C_0(X)\otimes\Cl_{p,q}=C_0(X)_{p,q}$, respectively. These are then ("real" graded) $\cstar$-algebras endowed with generalised "real" $\cG$-actions. Since $\al_{p,q}\in \KKR_{O(p+q)}(C_0(\RR^{p,q}),\Cl_{p,q})$ is a $\KKR_{O(p+q)}$-equivalence, its pullback $\FF_{p,q}^\ast\al_{p,q}\in \KKR_\cG(C_0(X)\otimes C_0(\RR^{p,q}),C_0(X)_{p,q})$ is a $\KKR_\cG$-equivalence, thanks to Proposition~\ref{pro:functorial-KKR_G-equivalence}. Hence, from Lemma~\ref{lem:KKR_G-equivalence}, the Kasparov product
\[
\xymatrix @!0 @C=4pc @R=3pc{\KKR_\cG\left(C_0(X)\hat{\otimes}C_0(\RR^{p,q}),C_0(X)\otimes \Cl_{p,q}\right) \otimes_{C_0(X;\Cl_{p,q})}  \KKR_{\cG,i}\left(C_0(X)\otimes_{C_0(X)}\Cl_{p,q}\hat{\otimes}A,B\right) \ar[dd]^{\FF_{p,q}^\ast\al_{p,q}\hat{\otimes}_{_{\cG,C_0(X;\Cl_{p,q})}}(\cdot)} \\ 
\\ 
\KKR_{\cG,i}\left(A(\RR^{p,q}),B\hat{\otimes}_{C_0(X)}C_0(X)\right)} 
\]
is an isomorphism. We therefore have the desired isomorphism since $C_0(X)\otimes_{C_0(X)}\Cl_{p,q}\hat{\otimes}A\cong A\hat{\otimes}\Cl_{p,q}$.
\end{proof}


\section{Twisting by "real" Clifford bundles and Stiefel-Whitney classes}

In this section we use the previous constructions to prove some new results in twisted groupoid $\K$-theory. \\
First recall~\cite{Moutuou:Brauer_Group} that a \emph{"real" graded $\uc$--extension} over the "real" groupoid $\cG$ is a graded extension (see~\cite{Tu:Twisted_Poincare}) $(\wGa,\Ga,Z)$, such that the groupoids $\wGa, \Ga$ are "real", $Z$ is a "real" generalised morphism $Z:\Ga\To \cG$, and if the groups $\uc, \ZZ_2$  are given the involution by complex conjugation and the trivial involution, respectively, all of the maps 
\[
\xymatrix{\uc \ar[r] & \wGa \ar[r]^\pi &\Ga \ar[d]^\del \\ & & \ZZ_2}
\]  
are equivariant. The set $\wRExt(\cG,\uc)$ of Morita equivalence classes o "real" graded $\uc$-entension over $\cG$ has the structure of abelian group. Moreover, by~\cite[Theorem 2.60]{Moutuou:Real.Cohomology}, there is an isomorphism $dd:\wRExt(\cG,\uc)\overset{\cong}{\To} \check{H}R^1(\cG_\bullet,\ZZ_2)\ltimes \check{H}R^2(\cG_\bullet,\uc)$, which is natural in the category $\RG$.  It follows that there is a natural isomorphism $\wRBr_0(\cG)\overset{\cong}{\To} \wRExt(\cG,\uc)$ (note that the construction of this isomorphism is explicitly given in~\cite{Moutuou:Brauer_Group}), where the left hand side is the subgroup of $\wRBr(\cG)$ consisting of "real" graded D-D bundles of type $0$.

Let $n=p+q\in \NN^\ast$. The group $\pin(p+q)$ is defined  as
\[
\pin(p+q):= \left\{\g\in \Cl_{p,q} \mid \ve(\g)v\g^\ast\in \RR^{p,q}, \forall v\in \RR^{p,q}, \ {\rm and\ } \g\g^\ast=1 \right\},
\]
where $\ve$ is the canonical $\ZZ_2$-grading of $\Cl_{p,q}$. It is known (~\cite[\S IV.4]{Karoubi:K-theory_Intro}) that 
\[
\pin(p+q)\cong \{\g=x_1\cdots x_k \in Cl_{p,q} \mid x_i\in \bfS^{p,q}, 1\le k\le 2n \}.
\] 
It follows that $\pin(p+q)$ is a "real" group with respect to the involution induced from $\bfS^{p,q}$; \emph{i.e.}, $\bar{\g}=\bar{x}_1\cdots \bar{x}_k$, for $\g\in \pin(p+q)$. Of course, this involution is equivalent to that induced from $\Cl_{p,q}$. Moreover, the surjective homomorphism $\pi: \pin(p+q)\To O(p+q),\g\mto \pi_\g$, where $\pi_\g(v):=\ve(\g)v\g^\ast$ for $v\in \RR^{p,q}$, is clearly "real". Notice that $\ker \pi =\{\pm1\}=\ZZ_2$. Hence, there is a canonical "real" graded $\ZZ_2$-central extension of the "real" groupoid $\xymatrix{O(p+q)\dar[r]& \cdot}$ 

\[
\xymatrix{\ZZ_2 \ar[r] & \pin(p+q)\ar[r]^{\pi} & O(p+q)\ar[d]^{\del} \\ & & \ZZ_2}
\]
where the homomorphism $\del:O(p+q)\To \ZZ_2$ is the unique map such that $\det A=(-1)^{\del(A)}$ (compare with~\cite[\S2.5]{Tu:Twisted_Poincare}). Let 
\[
\pin^c(p+q):=\pin(p+q)\times_{\{\pm1\}}\uc,
\]
be endowed with the "real" structure $[(\g,\lambda)]\mto [(\bar{\g},\bar{\lambda})]$, where as usual, the "bar" operation in $\uc$ is the complex conjugation. Then the above "real" graded $\ZZ_2$-central extension induces a "real" graded $\uc$-central extension $\cT_{p,q}$ 

\begin{equation}~\label{eq:ext(O(p+q))-tau_p-q}
 	\xymatrix{\uc \ar[r] & \pin^c(p+q) \ar[r]^{\pi} & O(p+q) \ar[d]^{\del} \\ & & \ZZ_2}
 \end{equation} 
 of $\xymatrix{O(p+q)\dar[r]& \cdot}$. 
 
Let $V$ be a "real" Euclidean vector bundle of type $p-q$ over $\grpd$. Then there is a Rg $\uc$-central extension $\EE^V$ obtained by pulling back $\cT_{p,q}$ via the generalized "real" homomorphism $\FF(V):\cG\To O(p+q)$.

\begin{df}
 Let $V$ be a "real" Euclidean vector bundle of type $p-q$ over $\grpd$. We define its \emph{associated "real" graded D-D bundle} as the "real" graded D-D bundle $\cA_V$ of type $0$ over $\grpd$ whose image in $\wRExt(\cG,\uc)$ is $[\EE^V]$ via the isomorphism $\wRBr_0(\cG)\To \wRExt(\cG,\uc)$.	
\end{df}  

\begin{lem}~\label{lem:A_(sym)-vs-pro-A}
Let $V$ and $V'$ be "real" Euclidean vector bundles of type $p-q$ and $p'-q'$, respectively, over $\grpd$. Then the "real" graded D-D bundles $\cA_{V\oplus V'}$ and $\cA_V\hat{\otimes}_X\cA_{V'}$ are Morita equivalent.
\end{lem}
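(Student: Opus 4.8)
The plan is to reduce the statement to an additivity property of the universal extensions $\cT_{p,q}$ and then transport that property through the group isomorphism $\wRBr_0(\cG)\overset{\cong}{\To}\wRExt(\cG,\uc)$. Since $V\oplus V'$ is a ``real'' Euclidean vector bundle of type $(p+p')-(q+q')$ (because $\RR^{p,q}\oplus\RR^{p',q'}\cong\RR^{p+p',q+q'}$ as ``real'' spaces), the bundle $\cA_{V\oplus V'}$ is defined and, like $\cA_V$ and $\cA_{V'}$, is of type $0$. The group operation on $\wRBr(\cG)$ is $\hat{\otimes}_X$, so $[\cA_V\hat{\otimes}_X\cA_{V'}]=[\cA_V]+[\cA_{V'}]$ in $\wRBr_0(\cG)$; as the isomorphism $\wRBr_0(\cG)\overset{\cong}{\To}\wRExt(\cG,\uc)$ sends $[\cA_W]$ to the class of the extension $\EE^W$, it suffices to prove the identity $[\EE^{V\oplus V'}]=[\EE^V]+[\EE^{V'}]$ in the abelian group $\wRExt(\cG,\uc)$, the sum being the (graded) Baer sum of ``real'' $\uc$-central extensions.

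First I would compare the frame bundles. As generalised ``real'' homomorphisms, $\FF(V\oplus V')$ factors as the block-diagonal inclusion $j\colon O(p+q)\times O(p'+q')\hookrightarrow O(p+p'+q+q')$ precomposed with $\FF(V)\times_X\FF(V')\colon\cG\To O(p+q)\times O(p'+q')$: the subbundle of orthonormal frames of $V\oplus V'$ that split as a frame of $V$ followed by a frame of $V'$ is precisely a reduction of the structure group along $j$, and that reduction is $\FF(V)\times_X\FF(V')$. Since $\EE^W=\FF(W)^\ast\cT$ by construction, functoriality of the pullback of extensions along generalised homomorphisms (natural in $\RG$) reduces the problem to the single algebraic identity $j^\ast\cT_{p+p',q+q'}\cong\cT_{p,q}\boxplus\cT_{p',q'}$ of ``real'' graded $\uc$-central extensions over $O(p+q)\times O(p'+q')$, where $\boxplus$ denotes the external Baer sum.

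The heart of the argument—and the step I expect to be the main obstacle—is this last identity, which is the multiplicativity of the family $\{\cT_{p,q}\}$ under the graded tensor product of Clifford algebras. The graded isomorphism $\Cl_{p,q}\hat{\otimes}\Cl_{p',q'}\cong\Cl_{p+p',q+q'}$ restricts to a map $\pin(p+q)\times\pin(p'+q')\To\pin(p+p'+q+q')$, $(\g,\g')\mapsto\g\hat{\otimes}\g'$, covering $j$ and sending $(-1,1)$ and $(1,-1)$ to $-1$; passing to $\pin^c$ and identifying the two central copies of $\uc$ yields the comparison morphism of $\uc$-central extensions realising $\cT_{p,q}\boxplus\cT_{p',q'}\cong j^\ast\cT_{p+p',q+q'}$. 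I would check that this map is ``real'' (it is assembled from the inclusions of ``real'' spheres $\bfS^{p,q},\bfS^{p',q'}\hookrightarrow\bfS^{p+p',q+q'}$) and that it respects the gradings $\del$, which is immediate from $\det(A\oplus A')=\det A\cdot\det A'$, so that $\del$ is additive under $j$.

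The genuinely delicate point is verifying that the induced pairing on the kernels $\{\pm1\}\times\{\pm1\}$ is exactly the identification defining the Baer sum of the two $\uc$-central extensions; this is where the sign $(-1,-1)\mapsto 1$ must be matched against the amalgamation in $\cT_{p,q}\boxplus\cT_{p',q'}$. I emphasise that this is a statement purely about the universal groups $\pin^c(n)$ and is entirely independent of $\cG$, so once it is settled, pulling back along $\FF(V)\times_X\FF(V')$ and invoking the additivity of the frame-bundle construction for direct sums completes the proof. Combining this with the reduction of the first paragraph gives $[\cA_{V\oplus V'}]=[\cA_V]+[\cA_{V'}]=[\cA_V\hat{\otimes}_X\cA_{V'}]$ in $\wRBr_0(\cG)$, i.e. $\cA_{V\oplus V'}$ and $\cA_V\hat{\otimes}_X\cA_{V'}$ are Morita equivalent.
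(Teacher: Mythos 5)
Your proposal is correct and follows essentially the same route as the paper: the paper's proof likewise rests on the "real" homomorphisms $\pin^c(p+q)\times\pin^c(p'+q')\To\pin^c((p+p')+(q+q'))$ and $O(p+q)\times O(p'+q')\To O((p+p')+(q+q'))$ coming from the graded Clifford isomorphism, and checks (at the level of a common trivialising cover rather than via your factorisation $\FF(V\oplus V')=j\circ(\FF(V)\times_X\FF(V'))$, a purely organisational difference) that $(\FF(V)^\ast\cT_{p,q})\hat{\otimes}(\FF(V')^\ast\cT_{p',q'})\sim(\FF(V\oplus V'))^\ast\cT_{p+p',q+q'}$. The kernel/sign verification you flag as delicate is exactly the point the paper compresses into "one easily checks," so your write-up is a faithful expansion of the same argument.
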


\begin{proof}
Considering the "real" homomorphisms $\pin^c(p+q)\times \pin^c(p'+q')\To \pin^c((p+p')+(q+q'))$ and $O(p+q)\times O(p'+q')\To O((p+p')+(q+q'))$ (cf.~\cite{Kasparov:Operator_K}) and a "real" open cove of $X$ trivialising both $V$ and $V'$ (and hence the direct sum $V\oplus V'$), one easily checks that $(\FF(V)^\ast\cT_{p,q})\hat{\otimes}(\FF(V')^\ast\cT_{p',q'})\sim (\FF(V\oplus V'))^\ast \cT_{p+p',q+q'}$.	
\end{proof}

\begin{lem}~\label{lem:A_id=0}
Let $\id^{p,q}$ be the trivial "real" Euclidean vector bundle $X\times \RR^{p,q}\To X$ of type $p-q$ over $\grpd$, where the "real" $\cG$-action is $g\cdot (s(g),t)=(r(g),t)$. Then $\cA_{\id^{p,q}}=0$ in $\wRBr_0(\cG)$.	
\end{lem}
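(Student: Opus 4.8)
The plan is to unwind the definition of the associated D-D bundle and reduce everything to the frame bundle of the trivial vector bundle being trivial. Since $\cA_{\id^{p,q}}$ is by construction the unique "real" graded D-D bundle of type $0$ whose image under the natural isomorphism $\wRBr_0(\cG)\overset{\cong}{\To}\wRExt(\cG,\uc)$ is the class $[\EE^{\id^{p,q}}]$ of the pulled-back extension $\EE^{\id^{p,q}}=\FF(\id^{p,q})^\ast\cT_{p,q}$, it suffices to prove that $[\EE^{\id^{p,q}}]=0$ in $\wRExt(\cG,\uc)$.

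First I would identify the relevant generalised homomorphism. By the Example recalled above, the frame bundle $\FF(\id^{p,q})=\FF(X\times\RR^{p,q})$ is isomorphic, as a generalised "real" homomorphism $\cG\To O(p+q)$, to the trivial principal bundle $\FF_{p,q}=X\times O(p+q)$. In the exploitable model $\RG_\Om$, this morphism is represented by the pair $(\cU,f)$ with $\cU=\{X\}$ (so that $\cG[\cU]=\cG$ and the inclusion $\iota$ is the identity) and $f:\cG\To O(p+q)$ the constant strict "real" morphism $f(g)=1$; note that $f$ is indeed "real" since $\overline{1}=1$ is fixed by the involution on $O(p+q)$.

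Next I would compute the pullback extension explicitly. Pulling back $\cT_{p,q}$ along the strict morphism $f$, the total groupoid of $f^\ast\cT_{p,q}$ is the fibred product $\cG\times_{f,\pi}\pin^c(p+q)=\{(g,\g)\mid \pi(\g)=f(g)=1\}$. Since $\pi(\g)=1$ forces $\g\in\ker\pi=\uc$, this fibred product is $\cG\times\uc$, and the induced projection onto $\cG$ realises the \emph{trivial} "real" $\uc$-central extension $\uc\To\cG\times\uc\To\cG$. Moreover the grading of $f^\ast\cT_{p,q}$ is $\del\circ f$, which vanishes because $\del(1)=0$ (the identity has determinant $+1$). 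Hence $f^\ast\cT_{p,q}$ is the trivial "real" graded $\uc$-extension, so $[\EE^{\id^{p,q}}]=[f^\ast\cT_{p,q}]=0$ in $\wRExt(\cG,\uc)$, and consequently $\cA_{\id^{p,q}}=0$ in $\wRBr_0(\cG)$.

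The main point requiring care is the compatibility of the "real" structures and gradings throughout. One must check that the isomorphism $\FF(\id^{p,q})\cong\FF_{p,q}$ of the Example is an isomorphism of generalised "real" homomorphisms, so that the pullbacks of $\cT_{p,q}$ agree as "real" graded extensions, and that the identification $f^\ast\cT_{p,q}\cong\cG\times\uc$ intertwines the involution $[(\g,\lambda)]\mto[(\bar{\g},\bar{\lambda})]$ on $\pin^c(p+q)$ with the product involution on $\cG\times\uc$ that is complex conjugation on the $\uc$-factor. Both verifications are routine once the constant morphism $f$ is fixed, since every structure map in sight sends $1$ to $1$ and respects the involutions.
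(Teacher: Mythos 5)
Your proposal is correct and follows essentially the same route as the paper's own proof: both identify $\FF(\id^{p,q})$ with the generalised morphism induced by the constant strict "real" homomorphism $g\mto \id\in O(p+q)$, pull back $\cT_{p,q}$ along it, and use $\ker\bigl(\pin^c(p+q)\To O(p+q)\bigr)=\uc$ to conclude that $\EE^{\id^{p,q}}$ is the trivial "real" graded extension $(\cG\times\uc,\cG,0)$. Your version merely spells out the fibred-product computation, the vanishing of the grading $\del\circ f$, and the "real"-structure compatibilities, which the paper leaves implicit.
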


\begin{proof}
The generalized "real" homomorphism $\FF(\id^{p,q}):\cG\To O(p+q)$ is but the generalised homomorphism induced by the strict "real" homomorphism $\id:\cG\ni g \mto \id \in O(p+q)$. Hence $\EE^{\id^{p,q}}=(\id^{p,q})^\ast \cT_{p,q}$, and since the kernel of the projection $\pin^c(p+q)\To O(p+q)$ is $\uc$, $\EE^{\id^{p,q}}$ is isomorphic to the trivial extension $(\cG\times \uc,\cG,0)$.	
\end{proof}
  
Now, note that the action of $\cG$ on the complexified bundle $V_\CC:=V\otimes_\RR\CC \To X$ induces a "real" $\cG$-action by graded automorphisms on the complex Clifford bundle 
\[\Cl(V):=Cl(V_\CC)\To X,\]
making it a "real" graded D-D bundle of type $q-p \mod 8$ over $\grpd$. 

We are interesting in comparing the "real" graded D-D bundles $\cA_V$ and $\Cl(V)$ by means of their classes in the "real" graded Brauer group~\cite{Moutuou:Brauer_Group}. In particular, we want to show the following.

\begin{thm}~\label{thm:main-thm-vector}
Let $V$ be a "real" Euclidean vector bundle of type $p-q$ over $\grpd$. Then for all $\cA\in \wRBr(\cG)$, we have 
\[
\KR^{\ast}_{\cA+\Cl(V)}(\cG^\bullet) \cong \KR^{\ast +q-p}_{\cA+\cA_V}(\cG^\bullet).
\]	
\end{thm}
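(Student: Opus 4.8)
The plan is to reduce the asserted isomorphism to a comparison of Morita classes in $\wRBr(\cG)$ together with the degree shift produced by a constant Clifford bundle. The key structural fact, recalled in the introduction, is that the twisted groups $\KR^{-n}_\cA(\cG^\bullet)=\KR_n(\cA\ltimes_r\cG)$ depend only on the class $[\cA]\in \wRBr(\cG)$. Hence it suffices to identify the class $[\cA+\Cl(V)]$ with $[\cA+\cA_V+\Cl_{p,q}]$, where $\Cl_{p,q}$ now denotes the constant "real" graded D-D bundle $X\times\Cl_{p,q}\To X$ equipped with the trivial $\cG$-action, and then to absorb the factor $\Cl_{p,q}$ into a shift of the $\KR$-degree by $q-p$.

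First I would establish the Morita equivalence $\Cl(V)\sim \cA_V\hat{\otimes}_X\Cl_{p,q}$ of "real" graded D-D bundles over $\cG$. Since $DD$ is an isomorphism, it is enough to compare images. By definition of the Stiefel-Whitney class and of $\cA_V$ one has $DD(\cA_V)=(0,w(V))$, where $w(V)=dd(\EE^V)\in \check{H}R^1\ltimes\check{H}R^2$ is the class of the pulled-back extension $\EE^V=\FF(V)^\ast\cT_{p,q}$; the content of Theorem~\ref{thm:Calcul-de-Cl(V)} is that $DD(\Cl(V))=(q-p,w(V))$, so the two bundles share the same $\check{H}R^1\ltimes\check{H}R^2$-component and differ only in $\ZZ_8$-type by $q-p$. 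The constant bundle $\Cl_{p,q}=\Cl(\id^{p,q})$ attached to the trivial Euclidean bundle $\id^{p,q}$ satisfies $DD(\Cl_{p,q})=(q-p,w(\id^{p,q}))=(q-p,0)$, because $w(\id^{p,q})=0$ (equivalently $\cA_{\id^{p,q}}=0$, Lemma~\ref{lem:A_id=0}). As the $\ZZ_8$-factor is a direct summand and the $\check{H}R^1\ltimes\check{H}R^2$-component of $DD(\Cl_{p,q})$ is trivial, the group law gives $DD(\cA_V)+DD(\Cl_{p,q})=(q-p,w(V))=DD(\Cl(V))$. Since addition in $\wRBr(\cG)$ is implemented by $\hat{\otimes}_X$ (the additivity behind Lemma~\ref{lem:A_(sym)-vs-pro-A}) and $DD$ is injective, this yields the desired Morita equivalence.

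Next I would transport this to $\KR$-theory. Tensoring by $\cA$ preserves the equivalence, so $[\cA+\Cl(V)]=[\cA+\cA_V+\Cl_{p,q}]$ and, by Morita invariance of the twisted groups, $\KR^\ast_{\cA+\Cl(V)}(\cG^\bullet)\cong \KR^\ast_{\cA+\cA_V+\Cl_{p,q}}(\cG^\bullet)$. Writing $\cB=\cA\hat{\otimes}_X\cA_V$ for a representative of $\cA+\cA_V$ and using that the constant, trivially-acted factor $\Cl_{p,q}$ pulls out of the groupoid convolution, one has $(\cB\hat{\otimes}_X\Cl_{p,q})\ltimes_r\cG\cong (\cB\ltimes_r\cG)\hat{\otimes}\Cl_{p,q}$. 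The Clifford periodicity of "real" $\KR$-theory, equivalently the Bott periodicity of the preceding section implemented by $\al_{p,q},\be_{p,q}$, then gives $\KR_n(D\hat{\otimes}\Cl_{p,q})\cong \KR_{n+p-q}(D)$, i.e. $\KR^\ast_{\cB+\Cl_{p,q}}(\cG^\bullet)\cong \KR^{\ast+q-p}_{\cB}(\cG^\bullet)$. Chaining the isomorphisms produces
\[
\KR^\ast_{\cA+\Cl(V)}(\cG^\bullet)\cong \KR^\ast_{\cA+\cA_V+\Cl_{p,q}}(\cG^\bullet)\cong \KR^{\ast+q-p}_{\cA+\cA_V}(\cG^\bullet).
\]

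The genuine difficulty is concentrated in the cohomological identification $DD(\Cl(V))=(q-p,w(V))=(q-p,DD(\cA_V))$, namely Theorem~\ref{thm:Calcul-de-Cl(V)}: one must read off the $\pin^c$-extension underlying $\Cl(V)$, compare it with $\EE^V=\FF(V)^\ast\cT_{p,q}$ in $\check{H}R^1\ltimes\check{H}R^2$, and check that the $\ZZ_8$-type is exactly the Morita type $q-p\bmod 8$ of $\Cl_{p,q}$. Granting this, the remaining ingredients—Morita invariance of twisted $\KR$-theory, factoring out the constant Clifford algebra, and the Clifford degree shift—are routine, the only minor care being the bookkeeping of sign conventions so that the shift appears as $+(q-p)$ on the upper index.
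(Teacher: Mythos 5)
Your proposal is correct and follows essentially the same route as the paper: the paper's own (very terse) proof likewise invokes Theorem~\ref{thm:Calcul-de-Cl(V)} to identify $[\Cl(V)]$ with $[\cA_V+\Cl_{p,q}]$ via the $DD$ isomorphism, and then disposes of the $\cA$-twist and the constant Clifford factor by the Kasparov product, i.e.\ exactly the Morita invariance plus Clifford degree shift you spell out. Your write-up simply makes explicit the steps the paper leaves implicit, including the sign bookkeeping for the $+(q-p)$ shift.
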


We shall mention that this result is proven by J.-L. Tu in the complex case (see~\cite[Proposition 2.5]{Tu:Twisted_Poincare}). However, the approach we will be using here to prove it is very different from that used by Tu. 

Indeed, our proof requires the construction of  \emph{generalised Stiefel-Whitney classes} of a "real" vector bundles over a "real" groupoid. Recall that associated to any real vector bundle $V$ over a locally compact paracompact space $X$, there are cohomology classes $w_i(V)\in H^i(X,\ZZ_2)$ called the \emph{$i^{th}$ Stiefel-Whitney classes} of $V$ (see for instance~\cite[Chap.17 \S2]{Husemoller:Fibre}). For instance $w_1(V)$ is the constraint for $V$ being \emph{oriented}, and $w_2(V)$ is the constraint for $V$ being $\spin^c$ (we will say more about that later). 
 
We have already seen that a "real" Euclidean vector bundle $V$ of type $p-q$ gives rise to a generalised "real" homomorphism $\FF(V):\cG\To O(p+q)$. In fact, "real" Euclidean vector bundles arise this way: given $P:\cG\To O(p+q)$, $V:=P\times_{O(p+q)}\RR^{p,q}\To X$ is a "real" Euclidean vector bundle of type $p-q$. There then is  a bijection between the set $\operatorname{Vect}_{p+q}(\cG)$ of isomorphism classes of "real" Euclidean vector bundles of type $p-q$ and the set $\Hom_{\RG}(\cG,O(p+q))$, and hence with $\check{H}R^1(\cG_\bullet,O(p+q))$, thanks to~\cite[Proposition 2.49]{Moutuou:Real.Cohomology}. 

Let $\frc$ be a "real" $O(p+q)$-valued $1$-cocycle over $\cG$ realizing $\FF(V)$. This can be considered as a "real" family of continuous maps $\frc_{(j_0,j_1)}: U^1_{(j_0,j_1)}\To O(p+q)$ such that 
\begin{equation}~\label{eq:cocycle-cond-frc}
\frc_{(j_0,j_1)}(\g_1)\frc_{(j_1,j_2)}(\g_2)=\frc_{(j_0,j_2)}(\g_1\g_2), \quad (\g_1,\g_2)\in U^2_{(j_0,j_1,j_2)},
\end{equation}
where $\cU=\{U_j\}_{j\in J}$ is a "real" open cover of $X$ (indeed, if $f:\cG[\cU]\To O(p+q)$ is a "real" homomorphism realising $\FF(V)$, then one can take $\frc_{(j_0,j_1)}(g_{(j_0,j_1)}):=f(g_{j_0j_1})$). We may suppose that the simplicial "real" cover $\cU_\bullet$ of $\cG_\bullet$ is "small" enough so that we can pick a "real" family of continuous maps $\tilde{\frc}_{(j_0,j_1)}: U^1_{(j_0,j_1)}\To \pin^c(p+q)$ which are a $\pin^c(p+q)$-lifting of $(\frc_{(j_0,j_1)})$ through the "real" projection $\pi:\pin^c(p+q)\To O(p+q)$; \emph{i.e.}, $\pi(\tilde{\frc}_{(j_0,j_1)}(\g))=\frc_{(j_0,j_1)}(\g), \forall \g\in U^1_{(j_0,j_1)}$. In view of equation~\eqref{eq:cocycle-cond-frc}, we have 
\begin{equation}~\label{eq:cocycle-cond-frc-tilde}
 \tilde{\frc}_{(j_0,j_1)}(\g_1)\tilde{\frc}_{(j_1,j_2)}(\g_2)=\om_{(j_0,j_1,j_2)}(\g_1,\g_2)\tilde{\frc}_{(j_0,j_2)}(\g_1\g_2), \forall (\g_1,\g_2)\in U^2_{(j_0,j_1,j_2)},	
 \end{equation} 
for some $\om_{(j_0,j_1,j)}(\g_1,\g_2)\in \uc$. The elements $\om_{(j_0,j_1,j)}(\g_1,\g_2)$ clearly define a "real" family of continuous functions $\om_{(j_0,j_1,j)}:U^2_{(j_0,j_1,j_2)}\To \uc$ which are easily checked to be an element of $ZR^2_{ss}(\cU_\bullet,\uc)$.

\begin{df}
Let $V$ be a "real" Euclidean vector bundle of type $p-q$ over $\grpd$. Let $\frc$ be the class of \ $\FF(V)$ in $\check{H}R^1(\cG_\bullet,O(p+q))$.
\begin{itemize}
	\item[(a)] The \emph{first generalized Stiefel-Whitney class} $w_1(V)\in \check{H}R^1(\cG_\bullet,\ZZ_2)$ as $w_1(V):=\del \circ \frc$, where $\del:O(p+q)\To \ZZ_2$ is the homomorphism defined in~\eqref{eq:ext(O(p+q))-tau_p-q}.
	\item[(b)] The \emph{second generalized Stiefel-Whitney class} $w_2(V)$ is the class in $\check{H}R^2(\cG_\bullet,\uc)$ of the "real" $2$-cocycle $\om$ uniquely determined by equation~\eqref{eq:cocycle-cond-frc-tilde}.	
	\end{itemize}	
We define $w(V):=\left(w_1(V),w_2(V)\right)\in \check{H}R^1(\cG_\bullet,\ZZ_2)\times \check{H}R^2(\cG_\bullet,\uc)$.	
\end{df}

\begin{rem}
Note that $w_1(V)=0$ implies that \ $\FF(V)$ is actually a "real" $SO(p+q)$-principal bundle over $\grpd$, which means that $V$ is \emph{oriented}. Moreover, the "real" family $(\om_{(j_0,j_1)})$ is nothing but the obstruction for the "real" $O(p+q)$-valued $1$-cocyle $\frc$ to lift to a "real" $\pin^c(p+q)$-valued $1$-cocyle $\tilde{\frc}$; or in other words, it is the obstruction for \ $\FF(V)$ to lift to a "real" $\pin^c(p+q)$-principal bundle over $\grpd$.
\end{rem}

\begin{ex}~\label{ex:Stiefel-Whitney-theta_p-q}
Denote by \ $\theta^{p,q}$ the trivial Euclidean vector bundle $\RR^{p,q}$ over $\xymatrix{O(p+q)\dar[r] & \cdot}$. Then $w_1(\theta^{p,q})=\del$ and $w_2(\theta^{p,q})=c_{p,q}$ is the "real" $\uc$-valued $2$-cocycle corresponding to the "real" graded $\uc$-central extension $\cT_{p,q} \in \wRExt(O(p+q),\uc)$ of~\eqref{eq:ext(O(p+q))-tau_p-q}. Moreover, $w(\theta^{p,q})=dd(\cT_{p,q})$.   	
\end{ex}

\begin{df}
A "real" Euclidean vector bundle $V$ of type $p-q$ over $\cG$ admits a $\spin^c$-\emph{structure} if $w(V)=0$; in this case we say that $V$ is $\spin^c$. We also say that $V$ is $\KR$-oriented (following Hilsum-Skandalis terminology~\cite{Hilsum-Skandalis:Morphismes}, see also H. Schr\"oder's book~\cite{Schroder:KR-theory}).	\end{df}

Taking the involution of $\cG$ to be the trivial one, the next result is actually the groupoid equivariant analogue of Plymen's~\cite[Theorem 2.8]{Plymen:Strong_Morita}.

\begin{pro}~\label{pro:DD(Cl(V))-vs-DD(A_V)}
Let $V$ be a "real" Euclidean vector bundle of type $p-q$ over $\grpd$.
\begin{enumerate}
\item We have $DD(\cA_V)=w(V)$. Hence, $V$ is $\KR$-oriented if and only if $\cA_V$ is trivial.

\item If $p=q\in \NN^\ast$, then $DD(\Cl(V))=DD(\cA_V)=w(V)$. Therefore, if $p=q$, the following statements are equivalent:
\begin{itemize}
\item[(i)] $V$ is $KR$-oriented.
\item[(ii)] The Rg D-D bundle $\Cl(V)\To X$ is trivial.
\item[(iii)] The Rg D-D bundle $\cA_V\To X$ is trivial.
\end{itemize}
\end{enumerate}
\end{pro}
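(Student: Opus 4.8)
The plan is to deduce everything from naturality of the Dixmier--Douady invariants together with the universal model over the one-object "real" groupoid $O(p+q)\rightrightarrows \cdot$. The crucial observation is that both $\cA_V$ and $\Cl(V)$ are pullbacks along the frame homomorphism $\FF(V)\colon\cG\To O(p+q)$ of the corresponding universal objects: by construction $\EE^V=\FF(V)^\ast\cT_{p,q}$, and since $V$ is the associated bundle $\FF(V)\times_{O(p+q)}\RR^{p,q}$ one has $\Cl(V)=\FF(V)^\ast\Cl_{p,q}$, where $\Cl_{p,q}$ is viewed as an $O(p+q)$-equivariant "real" graded D-D bundle over the point.

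For part (1), I would argue as follows. Since $\cA_V$ is of type $0$, its image under $DD$ lies in the factor $\check{H}R^1(\cG_\bullet,\ZZ_2)\ltimes\check{H}R^2(\cG_\bullet,\uc)$ and is computed through the natural isomorphism $\wRBr_0(\cG)\cong\wRExt(\cG,\uc)$ followed by $dd$; thus $DD(\cA_V)=dd([\EE^V])$. Using naturality of $dd$ in $\RG$ (\cite{Moutuou:Real.Cohomology}) together with $\EE^V=\FF(V)^\ast\cT_{p,q}$ gives $dd([\EE^V])=\FF(V)^\ast dd(\cT_{p,q})$, while Example~\ref{ex:Stiefel-Whitney-theta_p-q} identifies $dd(\cT_{p,q})=w(\theta^{p,q})$. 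Unwinding the definitions of $w_1(V)=\del\circ\frc$ and of the cocycle $\om$ defining $w_2(V)$ shows that $\FF(V)^\ast w(\theta^{p,q})=w(V)$ (indeed $\om$ is by construction the pullback of $c_{p,q}$). Hence $DD(\cA_V)=w(V)$, and since $DD$ is an isomorphism, $\cA_V$ is trivial if and only if $w(V)=0$, i.e. if and only if $V$ is $\KR$-oriented.

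For part (2), the same naturality reduces the claim to the universal case. When $p=q$ the type $q-p$ of $\Cl_{p,p}$ (hence of $\Cl(V)$) vanishes, so $\Cl(V)$ is a type-$0$ D-D bundle; its $DD$-invariant is therefore computed through $dd$ of the associated $\uc$-central extension, and naturality of $dd$ reduces the statement to proving $DD(\Cl_{p,p})=dd(\cT_{p,p})$ over $O(2p)$. Being of type $0$, $\Cl_{p,p}$ is "real" graded Morita trivial over the point, so $\Cl_{p,p}\cong\End(S)$ for the irreducible graded spinor module $S$. The tautological inclusion $\pin^c(2p)\hookrightarrow\Cl_{p,p}=\End(S)$ is a "real" graded representation $\rho\colon\pin^c(2p)\To\U(S)$ through which the $O(2p)$-action by graded automorphisms on $\Cl_{p,p}$ is implemented, i.e. $\al_g=\operatorname{Ad}\rho(\tilde g)$ for any lift $\tilde g$ of $g$. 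By Skolem--Noether the implementers of a fixed $\al_g$ are unique up to $\uc$, so the $\uc$-central extension of $O(2p)$ recording them is exactly $\cT_{p,p}$; this is precisely the datum extracted by the type-$0$ D-D invariant, whence $DD(\Cl_{p,p})=dd(\cT_{p,p})=DD(\cA_{\theta^{p,p}})$. Pulling back along $\FF(V)$ and invoking part (1) yields $DD(\Cl(V))=w(V)=DD(\cA_V)$, and the equivalence of (i)--(iii) follows at once: by the isomorphism $DD$, statements (ii) and (iii) read $DD(\Cl(V))=0$ and $DD(\cA_V)=0$, both equivalent to $w(V)=0$, i.e. to (i).

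The main obstacle is the universal identification $DD(\Cl_{p,p})=dd(\cT_{p,p})$, which is the groupoid-equivariant incarnation of Plymen's theorem~\cite{Plymen:Strong_Morita}. The delicate point is to check that the spinor representation $\rho$ is compatible with \emph{both} the grading and the "real" structures, so that the extension one extracts is genuinely $\cT_{p,p}$ and not merely some complex central extension; this compatibility is exactly what forces the hypothesis $p=q$, ensuring $p+q$ is even (so that $S$ is a single irreducible "real" graded module) and that the type $q-p$ vanishes.
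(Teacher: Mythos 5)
Your proposal is correct and takes essentially the same route as the paper: part (1) via naturality of $dd$ under $\FF(V)^\ast$ together with Example~\ref{ex:Stiefel-Whitney-theta_p-q}, and part (2) by reducing along the frame morphism $\FF(V)$ to the universal identification, over $O(p+p)$, of the $\uc$-central extension of implementers of the action on $\Cl_{p,p}\cong\cK(\hat{H})$ with $\cT_{p,p}$, which is exactly the paper's equivalence~\eqref{eq:Ad_lambda-E_K-vs-Tau_p-p}. The only cosmetic differences are that you invoke Skolem--Noether uniqueness of homogeneous implementers where the paper exhibits the explicit commutative diagram of extensions built from Kasparov's exterior-algebra model $\lambda:\pin^c(p+p)\To\wU(\hat{H})$, and that you obtain $\Cl(V)=\FF(V)^\ast\Cl_{p,q}$ by functoriality of the associated-bundle construction where the paper verifies it by hand with local trivialisations and transition functions $Ad_\lambda\al_{ij}$.
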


\begin{proof}
1. Since the map sending a "real" graded $\uc$-central extension onto its Dixmier-Douady class is a natural isomorphism (see~\cite{Moutuou:Brauer_Group}), we have a commutative diagram 
\begin{equation*}
	\xymatrix{\wRExt(O(p+q),\uc) \ar[rr]^{\FF(V)^\ast} \ar[d]_{dd}^{\cong} & & \wRExt(\cG,\uc) \ar[d]^{dd}_{\cong} \\ \check{H}R^1(O(p+q)_\bullet,\ZZ_2)\times \check{H}R^2(O(p+q)_\bullet,\uc) \ar[rr]^ -{\FF(V)^\ast \times \FF(F)^\ast} & & \check{H}R^1(\cG_\bullet,\ZZ_2)\times\check{H}R^2(\cG_\bullet,\uc)}	
	\end{equation*}
Hence, if $f:\cG[\cU]\To O(p+q)$ is a "real" homomorphism realising $\FF(V)$, then 
\[DD(\cA_V)=dd(f^\ast \cT_{p,q})=(f^\ast w_1(\theta^{p,q}),f^\ast w_2(\theta^{p,q}))=w(V),\]
where the first equality comes from the very definition of the "real" graded D-D bundle $\cA$, the second one follows from Example~\ref{ex:Stiefel-Whitney-theta_p-q}, and the last one is a simple interpretation on the construction of $w_1$ and $w_2$.	\\

2. If $p=q$, then $\Cl_{p,p}$ is the type $0$ "real" graded elementary $\cstar$-algebra $\cK(\hat{H})$, where $$\hat{H}=\CC^{2^{p-1}}\oplus \CC^{2^{p-1}}.$$ 
Identifying the "real" space $\RR^{p,p}$ with $\CC^p$ endowed with the coordinatewise complex conjugation, there is a degree conserving "real" representation
\[\lambda: \pin^c(p+p) \to \wU(\hat{H}),\]
induced by the "real" map $\CC^p\To \cL(\Lam^\ast \CC^p)\cong \cL(\hat{H})$ given by exterior multiplication (cf.~\cite{Kasparov:Operator_K}). This gives us a projective "real" representation 
\[
Ad_\lambda: O(p+p)\To \wPU(\hat{H})
\]
given by $Ad_\lambda (\g)(T):=\lambda(\tilde{\g})T\lambda(\tilde{\g})^{-1}$, for $T\in \cK(\hat{H})$, where $\tilde{\g}\in \pin^c(2p)$ is an arbitrary lift of $\g\in O(p+p)$, and where we have used the well-known identification $\wPU(\hat{H})\cong \Aut^{(0)}(\cK(\hat{H}))$. We thus have commutative diagrams

\begin{equation*}
\xymatrix{ \uc \ar[r] \ar@{=}[dd] & \pin^c(p+p) \ar[r] \ar[dd]^{\lambda} & O(p+p) \ar[dd]^{Ad_\lambda} \ar[rd] & \\
& & & \ZZ_2 \\
\uc \ar[r] & \wU(\hat{H}) \ar[r] & \wPU(\hat{H}) \ar[ur] }	
\end{equation*}
which yields to an equivalence of "real" graded $\uc$-central extensions 
\begin{equation}~\label{eq:Ad_lambda-E_K-vs-Tau_p-p}
	(Ad_\lambda)^\ast \EE_{\cK(\hat{H})} \sim \cT_{p,p} \in \wRExt(O(p+q),\uc)
\end{equation}
where $\EE_{\cK(\hat{H})}$ is the "real" graded $\uc$-central extension $(\wU(\hat{H}),deg)$ of the "real" groupoid $\xymatrix{\wPU(\hat{H})\dar[r]&\cdot}$.

Now let $(U_i,h_i)$ be a "real" trivialisation of $V$, with transition functions $\al_{ij}:U_{ij}\To O(p+p)$. Then $V$ is isomorphic to the Euclidean "real" vector bundle over $\cG$
\[
\coprod_{i}U_i\times \CC^p/_\sim \To X,
\]
where $(x,t)_i\sim (x,\al_{ij}(x)t)_j$, for $x\in U_{ij}$, endowed with the "real" $\cG$-action 
\[g\cdot [(s(g),t)]_{j_1}:= [(r(g),\frc_{(j_1,j_0)}(g)t)]_{j_0}, g\in U^1_{(j_0,j_1)}.\] 
Here $[(x,t)]_i$ denotes the class of $(x,t)_i\in U_i\times \CC^p$ in $\coprod_iU_i\times\CC^p/_\sim$.

Moreover, by universality of Clifford algebras (cf.~\cite[Chap.IV, \S4]{Karoubi:K-theory_Intro}), the "real" family of homeomorphisms $h_i:V_{|U_i}\To U_i\times \CC^p$ induces a "real" family of homeomorphisms $$\Cl(V)_{|U_i}\To U_i\times \Cl_{p,p}=U_i\times \cK(\hat{H})$$ with transition functions $Ad_\lambda\al_{ij}(\cdot):U_{ij}\To \wPU(\hat{H})$. Since the "real" action of $\cG$ on the Rg D-D bundle $\Cl(V)\To X$ is induced from the "real" action of $\cG$ on $V\To X$, it follows that $\Cl(V)$ is isomorphic to the "real" graded D-D bundle 
\[\coprod_iU_i\times \cK(\hat{H})/_\sim \]
where the equivalence relation is $(x,a)_i\sim (x,Ad_\lambda( \al_{ij}(x))a)_j$ for $x\in U_{ij}$, with the "real" $\cG$-action by graded automorphisms 
\[
g\cdot [(s(g),a)]_{j_1}:=[(r(g),Ad_\lambda (\frc_{(j_0,j_1)})a)]_{j_0}, g\in U^1_{(j_0,j_1)}.
\]
Therefore, if $P:\cG\To \wPU(\hat{H})$ is the generalised classifying morphism (~\cite[Section 66]{Moutuou:Brauer_Group}) for $\Cl(V)$, it corresponds to the class of $Ad_\lambda \frc$ in $\check{H}R^1(\cG_\bullet,\wPU(\hat{\cH}))$, thanks to~\cite[Proposition 7.4]{Moutuou:Brauer_Group}. Putting this in terms of generalised "real" homomorphisms, there is a commutative diagram in the category $\RG$ 
\[
\xymatrix{ & O(p+p) \ar[dd]^{Ad_\lambda} \\ \cG \ar[ur]^{\FF(V)} \ar[dr]_{P} & \\ & \wPU(\hat{H})}
\]
This combined with~\eqref{eq:Ad_lambda-E_K-vs-Tau_p-p} implies 
\[
P^\ast\EE_{\cK(\hat{H})}\sim \FF(V)^\ast \cT_{p,p}.
\]
Hence, $DD(\Cl(V))=dd([P^\ast \EE_{\cK(\hat{H})}])=dd([\FF(V)^\ast \cT_{p,p}]) =dd(\cA_V)=w(V)$, where the third and fourth equalities come from the first statement of the proposition.
\end{proof}

To see how things work in the general case, observe first that if $V_1, V_2$ are "real" Euclidean vector bundles of types $p_1-q_1$ and $p_2-q_2$, respectively, then $\Cl(V_1\oplus V_2)\To X$ is a "real" graded D-D bundle of type $(q_1+q_2)-(p_1+p_2)$ because $\Cl(V_1\oplus V_2)\cong \Cl(V_1)\hat{\otimes}_X\Cl(V_2)$ (cf.~\cite[\S2.15]{Kasparov:Operator_K} or~\cite{Atiyah-Bott-Schapiro:Clifford}).

Now let $V$ be a "real" Euclidean vector bundle of type $p-q$ over $\grpd$. Then we define \ $\tilde{V}:=V\oplus \id^{q,p}$, and $\widetilde{\Cl}(V) \in \wRBr_0(\cG)$ as the Rg D-D bundle of type $0$ defined by  $$\widetilde{\Cl}(V):=\Cl(\tilde{V})\To X,$$ with the obvious "real" $\cG$-action. Notice that this definition is a direct adaptation of~\cite[\S3.5]{Alekseev-Meinrenken:Dirac}. Moreover, we have the following

\begin{thm}~\label{thm:Calcul-de-Cl(V)}
Let $V$ be a "real" Euclidean vector bundle of type $p-q$ over $\grpd$. Then
\begin{eqnarray}
 DD(\Cl(V)) & = & (q-p, w_1(V),w_2(V)).
\end{eqnarray}	
\end{thm}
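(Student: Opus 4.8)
The plan is to bootstrap the general statement from the equal-type case already settled in Proposition~\ref{pro:DD(Cl(V))-vs-DD(A_V)}(2) by \emph{stabilising} $V$ with a trivial bundle of the opposite type. Concretely, I would set $\tilde{V}:=V\oplus \id^{q,p}$, which is a "real" Euclidean vector bundle of type $(p+q)-(p+q)$, i.e. of equal type $p'=q'=p+q$, and exploit the multiplicativity of the Clifford construction $\Cl(\tilde{V})\cong \Cl(V)\hat{\otimes}_X\Cl(\id^{q,p})$ recalled just before the statement. Since $DD$ is a group isomorphism, hence a homomorphism with respect to $\hat{\otimes}_X$, this will let me solve for $DD(\Cl(V))$ once the two factors $DD(\Cl(\tilde V))$ and $DD(\Cl(\id^{q,p}))$ are understood.

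First I would compute the two ingredients. Because $\id^{q,p}=X\times\RR^{q,p}$ carries the trivial $\cG$-action, the bundle $\Cl(\id^{q,p})=X\times\Cl_{q,p}$ is the constant "real" graded D-D bundle with fibre the elementary algebra $\Cl_{q,p}$ and trivial induced action; its only nontrivial invariant is thus its type, giving $DD(\Cl(\id^{q,p}))=(p-q,0,0)$. Next, since $\tilde{V}$ has equal type $p'=q'=p+q\in\NN^\ast$ (the degenerate case $p=q=0$, where $V=0$ and all invariants vanish, being trivial), Proposition~\ref{pro:DD(Cl(V))-vs-DD(A_V)}(2) applies directly to $\tilde V$ and yields $DD(\Cl(\tilde{V}))=(0,w_1(\tilde{V}),w_2(\tilde{V}))$.

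I would then reduce $w(\tilde V)$ to $w(V)$. Combining Lemma~\ref{lem:A_(sym)-vs-pro-A}, which gives $\cA_{V\oplus V'}\sim \cA_V\hat{\otimes}_X\cA_{V'}$, with $DD$ being a homomorphism and with $DD(\cA_V)=w(V)$ from Proposition~\ref{pro:DD(Cl(V))-vs-DD(A_V)}(1), shows that $w(V\oplus V')=w(V)\cdot w(V')$ in the semidirect product $\check{H}R^1(\cG_\bullet,\ZZ_2)\ltimes \check{H}R^2(\cG_\bullet,\uc)$. As $\cA_{\id^{q,p}}=0$ by Lemma~\ref{lem:A_id=0}, i.e. $w(\id^{q,p})=(0,0)$ is the group identity, I obtain $w(\tilde{V})=w(V)$, hence $DD(\Cl(\tilde{V}))=(0,w_1(V),w_2(V))$. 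Applying $DD$ to $\Cl(\tilde{V})\cong \Cl(V)\hat{\otimes}_X\Cl(\id^{q,p})$ gives $DD(\Cl(\tilde V))=DD(\Cl(V))\cdot DD(\Cl(\id^{q,p}))$ in $\check{H}R^0(\cG_\bullet,\ZZ_8)\oplus(\check{H}R^1(\cG_\bullet,\ZZ_2)\ltimes\check{H}R^2(\cG_\bullet,\uc))$, so that $DD(\Cl(V))=DD(\Cl(\tilde V))\cdot DD(\Cl(\id^{q,p}))^{-1}$. Reading this off componentwise—the $\ZZ_8$-summand adds as $0-(p-q)=q-p$, while in the semidirect factor the second factor $(0,0)$ is the identity and contributes nothing—produces $DD(\Cl(V))=(q-p,w_1(V),w_2(V))$, as claimed.

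The step I expect to require the most care is the bookkeeping in the semidirect product: one must confirm that dividing by $DD(\Cl(\id^{q,p}))$ genuinely leaves the Stiefel--Whitney part untouched, which works precisely because its $(\check{H}R^1,\check{H}R^2)$-coordinates vanish, so it acts as the identity on that factor while only shifting the independent $\ZZ_8$-grading by $p-q$; no knowledge of the explicit twisting cocycle of the $\ltimes$ is then needed. A secondary point to justify cleanly is that the Clifford isomorphism $\Cl(\tilde V)\cong \Cl(V)\hat{\otimes}_X\Cl(\id^{q,p})$ and the constancy of $\Cl(\id^{q,p})$ are compatible with the induced "real" $\cG$-actions, so that all the identities above genuinely hold in $\wRBr(\cG)$ and not merely fibrewise.
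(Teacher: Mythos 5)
Your proposal is correct and takes essentially the same route as the paper: stabilise to $\tilde V=V\oplus\id^{q,p}$, apply Proposition~\ref{pro:DD(Cl(V))-vs-DD(A_V)}(2) to the resulting equal-type bundle, strip off the trivial summand using Lemmas~\ref{lem:A_(sym)-vs-pro-A} and~\ref{lem:A_id=0}, and finish with Proposition~\ref{pro:DD(Cl(V))-vs-DD(A_V)}(1) to identify $DD(\cA_V)=w(V)$. The only cosmetic difference is that you solve $\Cl(\tilde V)\cong\Cl(V)\hat{\otimes}_X\Cl(\id^{q,p})$ for $DD(\Cl(V))$ by subtracting $(p-q,0,0)$, whereas the paper tensors $\widetilde{\Cl}(V)$ with $\Cl(\id^{p,q})$ and adds $(q-p,0,0)$ --- the same computation read in the opposite direction.
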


\begin{proof}
We have $DD(\widetilde{\Cl}(V))=DD(\cA_{\tilde{V}})$, thanks to Proposition~\ref{pro:DD(Cl(V))-vs-DD(A_V)} 2). Applying Lemma~\ref{lem:A_(sym)-vs-pro-A} and Lemma~\ref{lem:A_id=0}, we get $DD(\widetilde{\Cl}(V))=DD(\cA_V)$.
Furthermore, $\Cl(V)$ is clearly Morita equivalent to $\widetilde{\Cl}(V)\hat{\otimes}_X\Cl(\id^{p,q})$. Therefore, $$DD(\Cl(V))=DD(\widetilde{Cl}(V))+DD(\Cl(\id^{p,q}))=DD(\cA_V)+(q-p,0,0).$$ We conclude by applying Proposition~\ref{pro:DD(Cl(V))-vs-DD(A_V)} 1).	
\end{proof}

By using the fact $DD$ is a group homomorphism, we immediately deduce from the above theorem that

\begin{cor}
If $V$ and $V'$ are "real" Euclidean vector bundles over $\grpd$ then \\
$$w_1(V\oplus V')=w_1(V)+w_1(V'), \quad {\rm and\ } w_2(V\oplus V')=(-1)^{w_1(V)+w_1(V')}w_2(V)\cdot w_2(V').$$	
\end{cor}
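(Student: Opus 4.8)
The plan is to deduce the corollary directly from Theorem~\ref{thm:Calcul-de-Cl(V)} together with the multiplicativity of the Clifford construction and the fact that $DD$ is an isomorphism of abelian groups. First I would record that if $V$ and $V'$ have types $p-q$ and $p'-q'$ respectively, then $V\oplus V'$ has type $(p+p')-(q+q')$, and that the associated "real" graded D-D bundles satisfy $\Cl(V\oplus V')\cong \Cl(V)\hat{\otimes}_X\Cl(V')$ (the Morita equivalence noted just before Theorem~\ref{thm:Calcul-de-Cl(V)}, coming from~\cite[\S2.15]{Kasparov:Operator_K} or~\cite{Atiyah-Bott-Schapiro:Clifford}). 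Since graded tensor product over $X$ is exactly the group operation of $\wRBr(\cG)$ and $DD$ is a group homomorphism, this gives
$$DD(\Cl(V\oplus V'))=DD(\Cl(V))+DD(\Cl(V')).$$

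Next I would apply Theorem~\ref{thm:Calcul-de-Cl(V)} to each of the three terms, so that the identity above becomes
$$\big((q+q')-(p+p'),\,w_1(V\oplus V'),\,w_2(V\oplus V')\big)=(q-p,w_1(V),w_2(V))+(q'-p',w_1(V'),w_2(V'))$$
in the target group $\check{H}R^0(\cG_\bullet,\ZZ_8)\oplus\big(\check{H}R^1(\cG_\bullet,\ZZ_2)\ltimes\check{H}R^2(\cG_\bullet,\uc)\big)$, where the symbol $+$ denotes the (component-dependent) group law. The corollary is then read off component by component. The $\ZZ_8$-component merely records the consistency $(q+q')-(p+p')=(q-p)+(q'-p')$ and contributes nothing to the statement; the $\check{H}R^1(\cG_\bullet,\ZZ_2)$-component, where the group law is ordinary addition, yields immediately $w_1(V\oplus V')=w_1(V)+w_1(V')$.

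The only genuine content, and the step I expect to be the main obstacle, is the $\check{H}R^2(\cG_\bullet,\uc)$-component, since here the group is a \emph{semidirect} product and not a direct sum. I would invoke the explicit description of the multiplication in $\check{H}R^1(\cG_\bullet,\ZZ_2)\ltimes\check{H}R^2(\cG_\bullet,\uc)$ furnished by~\cite{Moutuou:Real.Cohomology,Moutuou:Brauer_Group}: the action of the first factor on the second is precisely the Koszul-type sign that appears when two \emph{graded} $\uc$-central extensions are multiplied, the grading being recorded by the $\del$-component, that is, by $w_1$. Tracing this twisting cocycle through the sum $(w_1(V),w_2(V))+(w_1(V'),w_2(V'))$ produces the correction factor, so that the third component reads $w_2(V\oplus V')=(-1)^{w_1(V)+w_1(V')}\,w_2(V)\cdot w_2(V')$, as claimed. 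The delicate point is merely to match the sign convention used in defining the semidirect product underlying $DD$ with the Koszul sign coming from the graded tensor product $\Cl(V)\hat{\otimes}_X\Cl(V')$; once these are seen to agree, the formula follows with no further computation.
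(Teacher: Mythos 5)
Your proposal is correct and follows essentially the same route as the paper: the paper's own proof is precisely the observation that $DD$ is a group homomorphism applied to $\Cl(V\oplus V')\cong \Cl(V)\hat{\otimes}_X\Cl(V')$ together with Theorem~\ref{thm:Calcul-de-Cl(V)}, with the sign in the $w_2$ formula arising exactly as you say from the semidirect product structure of $\check{H}R^1(\cG_\bullet,\ZZ_2)\ltimes\check{H}R^2(\cG_\bullet,\uc)$. Your write-up merely makes explicit the component-by-component reading that the paper leaves implicit.
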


\medskip 

\begin{proof}[Proof of Theorem~\ref{thm:main-thm-vector}]
As a consequence of Theorem~\ref{thm:Calcul-de-Cl(V)}, one has $$\KR^\ast_{\Cl(V)}(\cG^\bullet)\cong \KR^\ast_{\cA_V+\Cl_{p,q}}(\cG^\bullet),$$
so we conclude by using Kasparov product in $\KKR$-theory (recall that by definition we have $\KR^\ast_{\cA+\cB}(\cG^\bullet)=\KKR_{-\ast}(\CC,(\cA\hat{\otimes}_X\cB)\rtimes_r\cG)$).	
\end{proof}


\section{Thom isomorphism in twisted groupoid $K$-theory}

We start this section by some observations about $\spin^c$ "real" Euclidean vector bundles. Let
\[
\spin(p+q):=\pin(p+q)\cap \Cl_{p,q}^0
\]
 (cf.~\cite{Atiyah-Bott-Schapiro:Clifford, Karoubi:K-theory_Intro, Kasparov:Operator_K}). The restriction of the projection $\pin(p+q)\To O(p+q)$ induces a surjective "real" homomorphism $$\spin(p+q)\To SO(p+q)$$ with kernel $\ZZ_2$, where $SO(p+q)$ is equipped with the "real" structure induced from $O(p+q)$. Moreover, there is a "real" (trivially) graded $\uc$-central extension $\cT_{p,q}'$ 

\begin{equation*}
	\xymatrix{\uc \ar[r] & \spin^c(p+q) \ar[r] & SO(p+q) \ar[d] \\ & & \ZZ_2}
\end{equation*}
over the "real" groupoid $\xymatrix{SO(p+q)\dar[r]& \cdot}$, where $SO(p+q)\To \ZZ_2$ is the zero map, and where 
$$\spin^c(p+q):=\spin(p+q)\times_{\ZZ_2}\uc.$$

Now suppose $V$ is a "real" Euclidean vector bundle of type $p-q$ over $\grpd$. If $w_1(V)=0$, then $\FF(V)$ reduces to a generalised "real" homomorphism from $\cG$ to $\xymatrix{SO(p+q)\dar[r]& \cdot}$. So, $\cA_V$ comes from the "real" graded $\uc$-central extension $\FF(V)^\ast\cT_{p,q}'$. Moreover, $V$ being $\KR$-oriented means that $\FF(V)$ actually is a "real" $\spin^c$-principal bundle over $\grpd$, hence a generalised "real" homeomorphism from $\grpd$ to $\xymatrix{\spin^c(p+q)\dar[r]&\cdot}$.

The following result generalises the Thom isomorphism theorem in twisted orthogonal $\K$-theory known in the case of topological spaces (see Karoubi and Donovan~\cite{Donovan-Karoubi} and Karoubi~\cite{Karoubi:Twisted}) and in twisted $\K$-theory of bundle gerbes proved by A. Carey and B.-L. Wang in~\cite{Carey-Wang:Thom}.

\begin{thm}~\label{thm:Thom-isomorphism}
Let $\grpd$ be a locally compact Hausdorff second-countable "real" groupoid with "real" Haar system. Let $\pi:V\To X$ be a "real" Euclidean vector bundle of type $p-q$ over $\grpd$, and let $\cA\in \wRBr(\cG)$. Then there is a canonical group isomorphism

\begin{eqnarray}~\label{eq:Thom1}
\KR_{\pi^\ast\cA}^\ast((\pi^\ast\cG)^\bullet) &\cong &\KR_{\cA+\Cl(V)}(\cG^\bullet).
\end{eqnarray}
Furthermore, if \ $V$ is $\KR$-oriented, then there is a canonical isomorphism
\begin{eqnarray}~\label{eq:Thom2}
	\KR_{\pi^\ast\cA}^\ast((\pi^\ast\cG)^\bullet) &\cong & \KR^{\ast-p+q}_\cA(\cG^\bullet),
\end{eqnarray}
where as usual, the "real" groupoid $\xymatrix{\pi^\ast\cG\dar[r]& V}$ is the pullback of $\grpd$ via the projection $\pi$.
\end{thm}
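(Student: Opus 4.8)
The plan is to reduce both isomorphisms to a single fibrewise Thom equivalence together with the bookkeeping of reduced crossed products. Recall that by definition $\KR^\ast_{\cB}(\cG^\bullet)=\KKR_{-\ast}(\CC,\cB\rtimes_r\cG)$, so a $\KKR_\cG$-equivalence between two "real" graded $\cG$-algebras descends, through the reduced descent morphism $j_{\cG,red}$ (which is compatible with the Kasparov product), to a $\KKR$-equivalence of the associated reduced crossed products, and hence to an isomorphism of the corresponding twisted $\KR$-groups. Observe first that \eqref{eq:Thom2} is a formal consequence of \eqref{eq:Thom1}: combining the latter with Theorem~\ref{thm:main-thm-vector} gives $\KR^\ast_{\pi^\ast\cA}((\pi^\ast\cG)^\bullet)\cong\KR^{\ast+q-p}_{\cA+\cA_V}(\cG^\bullet)$, and when $V$ is $\KR$-oriented we have $w(V)=0$, so $\cA_V$ is trivial in $\wRBr(\cG)$ by Proposition~\ref{pro:DD(Cl(V))-vs-DD(A_V)}(1); thus $\cA+\cA_V\cong\cA$ and the right-hand side becomes $\KR^{\ast-p+q}_\cA(\cG^\bullet)$. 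It therefore suffices to prove \eqref{eq:Thom1}.

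The first step is to identify the left-hand side as an $\cA$-twisted, $\cG$-equivariant group with coefficients in $C_0(V)$. The pullback groupoid $\pi^\ast\cG\rightrightarrows V$ is the action (transformation) groupoid of the $\cG$-action on the total space $V$, so reorganising the sections of the pulled-back Fell bundle $\pi^\ast\sA\To\pi^\ast\cG$ as sections over $\cG$ with fibrewise coefficient $C_0(V)$ yields a canonical isomorphism of "real" graded $\cstar$-algebras
\begin{equation*}
\pi^\ast\cA\rtimes_r\pi^\ast\cG\;\cong\;\bigl(C_0(V)\hat{\otimes}_{C_0(X)}\cA\bigr)\rtimes_r\cG,
\end{equation*}
where $C_0(V)$ carries the generalised $\cG$-action induced by the $\cG$-action on $V$. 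Hence $\KR^\ast_{\pi^\ast\cA}((\pi^\ast\cG)^\bullet)\cong\KKR_{-\ast}\bigl(\CC,(C_0(V)\hat{\otimes}_{C_0(X)}\cA)\rtimes_r\cG\bigr)$.

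The second step is the fibrewise Thom equivalence. Pulling back Kasparov's $O(p+q)$-equivariant equivalence $\al_{p,q}\in\KKR_{O(p+q)}(C_0(\RR^{p,q}),\Cl_{p,q})$ along the frame bundle $\FF(V):\cG\To O(p+q)$, and using the identifications $\FF(V)^\ast C_0(\RR^{p,q})\cong C_0(V)$ and $\FF(V)^\ast\Cl_{p,q}\cong\Cl(V)$ together with the functoriality of $\KKR_\cG$-equivalence in $\RG$ (Proposition~\ref{pro:functorial-KKR_G-equivalence}), the class $\FF(V)^\ast\al_{p,q}\in\KKR_\cG(C_0(V),\Cl(V))$ is a $\KKR_\cG$-equivalence. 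Applying $\tau_\cA$, which sends $\KKR_\cG$-equivalences to $\KKR_\cG$-equivalences since it respects the Kasparov product, produces a $\KKR_\cG$-equivalence between $C_0(V)\hat{\otimes}_{C_0(X)}\cA$ and $\Cl(V)\hat{\otimes}_{C_0(X)}\cA$, the latter being the section algebra of the Brauer sum $\cA+\Cl(V)$. Descending via $j_{\cG,red}$ and combining with the first step gives
\begin{equation*}
\KR^\ast_{\pi^\ast\cA}((\pi^\ast\cG)^\bullet)\cong\KKR_{-\ast}\bigl(\CC,(\cA\hat{\otimes}_X\Cl(V))\rtimes_r\cG\bigr)=\KR^\ast_{\cA+\Cl(V)}(\cG^\bullet),
\end{equation*}
which is \eqref{eq:Thom1}.

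The main obstacle I anticipate is the crossed-product identification of the first step: transporting the reduced Fell-bundle $\cstar$-algebra of the action groupoid $\pi^\ast\cG$ to a reduced crossed product over $\cG$, matching the pulled-back twist $\pi^\ast\cA$ with the $C_0(V)$-coefficient twist, and doing so compatibly with the $\ZZ_2$-gradings, the "real" involutions, and the reduced norms (the "real" Haar system hypothesis is precisely what makes these reduced algebras available). By contrast, the fibrewise Thom element and its pull-back are essentially formal once the functoriality of Sections~4--5 and Kasparov's $\al_{p,q}$ are in hand; the only extra verification there is that $\FF(V)^\ast$ carries $C_0(\RR^{p,q})$ and $\Cl_{p,q}$ to $C_0(V)$ and $\Cl(V)$ as $\cG$-algebras, which is the associated-bundle construction applied to the frame bundle.
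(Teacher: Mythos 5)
Your proposal is correct and follows essentially the same route as the paper's own proof: reduce \eqref{eq:Thom2} to \eqref{eq:Thom1} via Theorem~\ref{thm:main-thm-vector} and the triviality of $\cA_V$ for $\KR$-oriented $V$, pull back Kasparov's equivalence $\al_{p,q}$ along $\FF(V)$ using Proposition~\ref{pro:functorial-KKR_G-equivalence}, tensor with $\cA$, and descend with $j_{\cG,red}$. The only differences are cosmetic: you make explicit the identification $\pi^\ast\cA\rtimes_r\pi^\ast\cG\cong\bigl(C_0(V)\hat{\otimes}_{C_0(X)}\cA\bigr)\rtimes_r\cG$, which the paper uses implicitly, and you phrase the tensoring step via $\tau_\cA$ rather than as a Kasparov product with $\id_{C_0(X;\cA)}$.
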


\begin{proof}
From Theorem~\ref{thm:main-thm-vector} we have $\KR_{\cA+\Cl(V)}^\ast(\cG^\bullet)\cong \KR^{\ast-p+q}_{\cA+\cA_V}(\cG^\bullet)$; in particular if $V$ is $\spin^c$, $\cA_V=0$ and $\KR^\ast_{\cA+\Cl(V)}\cong \KR^{\ast-p+q}_\cA(\cG^\bullet)$, which implies that the isomorphism~\eqref{eq:Thom2} deduces from isomorphism~\eqref{eq:Thom1}. Let us show the latter. The $\KKR_{O(p+q)}$-equivalence $$\al_{p,q}\in \KKR(C_0(\RR^{p,q}),\Cl(\RR^{p,q}))$$ induces by functoriality in the category $\RG$ a $\KKR_\cG$-equivalence $$\FF(V)^\ast\al_{p,q}\in \KKR_{\cG,\ast}(C_0(V)\hat{\otimes}C_0(X),C_0(X;\Cl(V))).$$ Thus, by the identifications of $\cstar$-algebras with generalized "real" $\cG$-actions  $$C_0(X;\cA)\cong C_0(X)\hat{\otimes}_{C_0(X)}C_0(X;\cA), \quad {\rm and}$$ 
$$C_0(V;\pi^\ast\cA)\cong C_0(V)\hat{\otimes}_{C_0(X)}C_0(X;\cA),$$ we get a $\KKR_\cG$-equivalence 
\[\tilde{\al}_V\in \KKR_{\cG,\ast}(C_0(V;\pi^\ast \cA),C_0(X;\cA\hat{\otimes}_X\Cl(V))),\]
by taking $\al_V$ to be the Kasparov product of $\FF(V)^\ast\al_{p,q}$ with the canonical $\KKR_\cG$-equivalence $$\id_{C_0(X;\cA)}\in \KKR_\cG(C_0(X;\cA),C_0(X;\cA)).$$
Therefore, we obtain a $\KKR$-equivalence $$\al_V:=j_{\cG,red}(\tilde{\al}_V)\in \KKR_\ast(C_0(V;\pi^\ast \cA)\rtimes_r\cG,C_0(X;\cA\hat{\otimes}_X\Cl(V))\rtimes_r\cG),$$ where $j_{\cG,red}$ is the descent morphism; and we are done.
\end{proof}

\begin{bibdiv}\begin{biblist}
\bib{Alekseev-Meinrenken:Dirac}{article}{
  author={Alekseev, A.},
  author={Meinrenken, E.},
  title={Dirac structures and Dixmier-Douady bundles},
  journal={Int. Math. Res. Not. IMRN},
  date={2012},
  number={4},
  pages={904--956},
  issn={1073-7928},
  review={\MR {2889163}},
}

\bib{Atiyah-Bott-Schapiro:Clifford}{article}{
  author={Atiyah, M. F.},
  author={Bott, R.},
  author={Shapiro, A.},
  title={Clifford modules},
  journal={Topology},
  volume={3},
  date={1964},
  number={suppl. 1},
  pages={3--38},
  issn={0040-9383},
  review={\MR {0167985 (29 \#5250)}},
}

\bib{Blackadar:K-theory1}{book}{
  author={Blackadar, B.},
  title={\(K\)\nobreakdash -Theory for operator algebras},
  series={Mathematical Sciences Research Institute Publications},
  volume={5},
  publisher={Springer-Verlag},
  place={New York},
  date={1986},
  pages={viii+338},
  isbn={0-387-96391-X},
  review={\MRref {859867}{88g:46082}},
}

\bib{Carey-Wang:Thom}{article}{
  author={Carey, A. L.},
  author={Wang, B.-L.},
  title={Thom isomorphism and push-forward map in twisted $K$-theory},
  journal={J. K-Theory},
  volume={1},
  date={2008},
  number={2},
  pages={357--393},
  issn={1865-2433},
  review={\MR {2434190 (2009g:55005)}},
  doi={10.1017/is007011015jkt011},
}

\bib{Cuntz-Skandalis:Cones}{article}{
  author={Cuntz, J.},
  author={Skandalis, G.},
  title={Mapping cones and exact sequences in $KK$-theory},
  journal={J. Operator Theory},
  volume={15},
  date={1986},
  number={1},
  pages={163--180},
  issn={0379-4024},
  review={\MRref {816237}{88b:46099}},
}

\bib{Donovan-Karoubi}{article}{
  author={Donovan, P.},
  author={Karoubi, M.},
  title={Graded Brauer groups and $K$-theory with local coefficients},
  journal={Inst. Hautes \'Etudes Sci. Publ. Math.},
  number={38},
  date={1970},
  pages={5--25},
  issn={0073-8301},
  review={\MR {0282363 (43 \#8075)}},
}

\bib{Hilsum-Skandalis:Morphismes}{article}{
  author={Hilsum, M.},
  author={Skandalis, G.},
  title={Morphismes \(K\)\nobreakdash -orient\'es d'espaces de feuilles et fonctorialit\'e en th\'eorie de Kasparov \textup (d'apr\`es une conjecture d'A. Connes\textup )},
  language={French, with English summary},
  journal={Ann. Sci. \'Ecole Norm. Sup. (4)},
  volume={20},
  date={1987},
  number={3},
  pages={325--390},
  issn={0012-9593},
  review={\MRref {925720}{90a:58169}},
}

\bib{Husemoller:Fibre}{book}{
  author={Husem\"oller, D.},
  title={Fibre bundles},
  series={Graduate Texts in Mathematics},
  volume={20},
  edition={3},
  publisher={Springer-Verlag},
  place={New York},
  date={1994},
  pages={xx+353},
  isbn={0-387-94087-1},
  review={\MR {1249482 (94k:55001)}},
}

\bib{Jensen-Thomsen:Elements_KK}{book}{
  author={Jensen, K. K.},
  author={Thomsen, K.},
  title={Elements of $KK$-theory},
  series={Mathematics: Theory \& Applications},
  publisher={Birkh\"auser Boston Inc.},
  place={Boston, MA},
  date={1991},
  pages={viii+202},
  isbn={0-8176-3496-7},
  review={\MR {1124848 (94b:19008)}},
  doi={10.1007/978-1-4612-0449-7},
}

\bib{Karoubi:K-theory_Intro}{book}{
  author={Karoubi, M.},
  title={$K$\nobreakdash -theory},
  series={Classics in Mathematics},
  note={An introduction; Reprint of the 1978 edition; With a new postface by the author and a list of errata},
  publisher={Springer-Verlag},
  place={Berlin},
  date={2008},
  pages={xviii+308+e-7},
  isbn={978-3-540-79889-7},
  review={\MR {2458205 (2009i:19001)}},
  doi={10.1007/978-3-540-79890-3},
}

\bib{Karoubi:Twisted}{article}{
  author={Karoubi, M.},
  title={Twisted $K$-theory---old and new},
  conference={ title={$K$-theory and noncommutative geometry}, },
  book={ series={EMS Ser. Congr. Rep.}, publisher={Eur. Math. Soc., Z\"urich}, },
  date={2008},
  pages={117--149},
  review={\MR {2513335 (2010h:19010)}},
  doi={10.4171/060-1/5},
}

\bib{Kasparov:Operator_K}{article}{
  author={Kasparov, G. G.},
  title={The operator \(K\)\nobreakdash -functor and extensions of \(C^*\)\nobreakdash -algebras},
  language={Russian},
  journal={Izv. Akad. Nauk SSSR Ser. Mat.},
  volume={44},
  date={1980},
  number={3},
  pages={571--636, 719},
  issn={0373-2436},
  translation={ language={English}, journal={Math. USSR-Izv.}, volume={16}, date={1981}, number={3}, pages={513--572 (1981)}, },
  review={\MRref {582160}{81m:58075}},
}

\bib{Kasparov:Novikov}{article}{
  author={Kasparov, G. G.},
  title={Equivariant \(KK\)-theory and the Novikov conjecture},
  journal={Invent. Math.},
  volume={91},
  date={1988},
  number={1},
  pages={147--201},
  issn={0020-9910},
  review={\MRref {918241}{88j:58123}},
  doi={10.1007/BF01404917},
}

\bib{Kumjian:Fell}{article}{
  author={Kumjian, A.},
  title={Fell bundles over groupoids},
  journal={Proc. Amer. Math. Soc.},
  volume={126},
  date={1998},
  number={4},
  pages={1115--1125},
  issn={0002-9939},
  review={\MR {1443836 (98i:46055)}},
  doi={10.1090/S0002-9939-98-04240-3},
}

\bib{Kumjian-Muhly-Renault-Williams:Brauer}{article}{
  author={Kumjian, A.},
  author={Muhly, P. S.},
  author={Renault, J. N.},
  author={Williams, D. P.},
  title={The Brauer group of a locally compact groupoid},
  journal={Amer. J. Math.},
  volume={120},
  date={1998},
  number={5},
  pages={901--954},
  issn={0002-9327},
  review={\MR {1646047 (2000b:46122)}},
}

\bib{LeGall:KK_groupoid}{article}{
  author={Le Gall, P.-Y.},
  title={Th\'eorie de Kasparov \'equivariante et groupo\"\i des. I},
  language={French, with English and French summaries},
  journal={\(K\)\nobreakdash -Theory},
  volume={16},
  date={1999},
  number={4},
  pages={361--390},
  issn={0920-3036},
  review={\MRref {1686846}{2000f:19006}},
  doi={10.1023/A:1007707525423},
}

\bib{Moutuou:Brauer_Group}{article}{
  author={Moutuou, E. M.},
  title={The graded Brauer group of a groupoid with involution},
  status={eprint},
  note={\arxiv {1202.2057}},
  date={2012},
}

\bib{Moutuou:Real.Cohomology}{article}{
  author={Moutuou, E. M.},
  title={On groupoids with involutions and their cohomology},
  status={eprint},
  note={\arxiv {1202.0155}},
  date={2012},
}

\bib{Moutuou:Thesis}{thesis}{
  author={Moutuou, E. M.},
  title={Twisted groupoid $KR$--Theory},
  type={Ph.D. thesis},
  institution={Universit\'e de Lorraine - Metz, and Universit\"at Paderborn},
  eprint={http://www.theses.fr/2012LORR0042},
  date={2012},
}

\bib{Plymen:Strong_Morita}{article}{
  author={Plymen, R. J.},
  title={Strong Morita equivalence, spinors and symplectic spinors},
  journal={J. Operator Theory},
  volume={16},
  date={1986},
  number={2},
  pages={305--324},
  issn={0379-4024},
  review={\MR {860349 (88d:58112)}},
}

\bib{Schroder:KR-theory}{book}{
  author={Schr{\"o}der, H.},
  title={$K$-theory for real $C^*$-algebras and applications},
  series={Pitman Research Notes in Mathematics Series},
  volume={290},
  publisher={Longman Scientific \& Technical},
  place={Harlow},
  date={1993},
  pages={xiv+162},
  isbn={0-582-21929-9},
  review={\MR {1267059 (95f:19006)}},
}

\bib{Skandalis:Remarks_KK}{article}{
  author={Skandalis, G.},
  title={Some remarks on Kasparov theory},
  journal={J. Funct. Anal.},
  volume={56},
  date={1984},
  number={3},
  pages={337--347},
  issn={0022-1236},
  review={\MRref {743845}{86c:46085}},
}

\bib{Tu:BC_groupoids}{article}{
  author={Tu, J.-L.},
  title={The Baum--Connes conjecture for groupoids},
  conference={ title={$C^*$\nobreakdash -algebras}, address={M\"unster}, date={1999}, },
  book={ publisher={Springer}, place={Berlin}, },
  date={2000},
  pages={227--242},
  review={\MRref {1798599}{2001j:46109}},
}

\bib{Tu:Twisted_Poincare}{article}{
  author={Tu, J.-L.},
  title={Twisted $K$\nobreakdash -theory and Poincar\'e duality},
  journal={Trans. Amer. Math. Soc.},
  volume={361},
  date={2009},
  number={3},
  pages={1269--1278},
  issn={0002-9947},
  review={\MRref {2457398}{}},
}

\bib{Tu-Xu-Laurent-Gengoux:Twisted_K}{article}{
  author={Tu, J.-L.},
  author={Xu, Ping},
  author={Laurent-Gengoux, C.},
  title={Twisted $K$\nobreakdash -theory of differentiable stacks},
  language={English, with English and French summaries},
  journal={Ann. Sci. \'Ecole Norm. Sup. (4)},
  volume={37},
  date={2004},
  number={6},
  pages={841--910},
  issn={0012-9593},
  review={\MRref {2119241}{2005k:58037}},
}

 \end{biblist}\end{bibdiv}

\end{document}